    \theoremstyle{plain}
\newtheorem{theorem}{Theorem}[section]
\newtheorem*{theorema}{Theorem A}
\newtheorem*{theoremb}{Theorem B}
\newtheorem{fact}{Fact}[section]
\newtheorem{lemma}[theorem]{Lemma}
\newtheorem{proposition}[theorem]{Proposition}
\newtheorem{corollary}[theorem]{Corollary}
\newtheorem{definition}[theorem]{Definition}
\newtheorem{definitions}[theorem]{Definitions}
\newtheorem{example}[theorem]{Example}
\newtheorem*{notation}{Notation}
\theoremstyle{definition}
\newtheorem{remark}[theorem]{Remark}
\newtheorem{remarks}[theorem]{Remarks}
\newcommand\<[1]{\left\langle\, #1\,\right\rangle}
\newcommand\bang[1]{\big\langle\, #1\,\big\rangle}
\newcommand\norm[1]{\lVert #1 \rVert}
\newcommand\bnorm[1]{\big\lVert #1 \big\rVert}
\newcommand{\pp}{p^\prime}
\newcommand{\Cf}{\ensuremath \mbox{\large${\mathbf{1}}$}}
\newcommand{\hh}{\ensuremath \mbox{\tiny $H$}}	
\newcommand{\h}{\ensuremath \mathbb{H}}
\newcommand{\mbld}{\ensuremath
{\ll(L^2(G))}}
\newcommand{\G}{\ensuremath \mbox{\tiny $G$}}	
\newcommand{\mgh}{\ensuremath  \mathrm{m}_{\G/\mbox{\tiny $H$}}}
\newcommand{\dmgh}{\ensuremath  \, \mathrm{dm}_{\G/\mbox{\tiny $H$}}}
\newcommand{\dmh}{\ensuremath  \, \mathrm{dm}_{\mbox{\tiny $H$}}}
\newcommand{\dmg}{\ensuremath  \, \mathrm{dm}_{\G}}
\newcommand{\mg}{\ensuremath  \, \mathrm{m}_{\G}}
\newcommand{\dmu}{\ensuremath \, \mathrm{d}\mu}
\newcommand{\mbx}{\ensuremath {\ll(X)}}
\newcommand\Fix[1]{\ensuremath \ker(#1-I)}
\newcommand\restr[2]{{
  \left.\kern-\nulldelimiterspace 
  #1 
  \vphantom{|} 
  \right|_{_{#2}} 
  }}
\newcommand{\R}{\mathbb{R}}
\newcommand{\C}{\mathbb{C}}
\newcommand{\T}{\mathbb{T}}
\newcommand{\Q}{\mathbb{Q}}
\newcommand{\N}{\mathbb{N}}
\newcommand{\Z}{\mathbb{Z}}
\DeclareMathOperator{\supp}{supp}
\renewcommand{\ll}{\mathcal{L}}
\renewcommand{\emptyset}{\varnothing}
\DeclareMathOperator{\range}{Ran}
\title{Ergodic properties of convolution operators}
\author{Jorge Galindo \and Enrique Jord\'a}
\address{\noindent Jorge Galindo, Instituto Universitario de Matem\'aticas y
Aplicaciones (IMAC)\\ Universidad Jaume I, E-12071, Cas\-tell\'on,
Spain. \hfill\break \noindent E-mail: {\tt jgalindo@mat.uji.es}}
\address{\noindent Enrique Jord\'a, EPS Alcoy, Instituto Universitario de Matem\'atica Pura y Aplicada IUMPA,
Universitat Polit\`ecnica de Val\`encia, Plaza Ferr\'andiz y Carbonell s/n
E-03801 Alcoy, Spain, \hfill\break \noindent E-mail: {\tt ejorda@mat.upv.es}}
\keywords{mean ergodic operator, uniformly mean ergodic operator, convolution operator, locally compact group, amenable group}
\subjclass[2010]{Primary 43A05; Secondary 43 A15, 43A20, 46H99, 47A35}
\date{\today}
\begin{document}
\begin{abstract}
Let $G$ be a locally compact group and $\mu$ be a measure on $G$.
In this paper we find conditions for the convolution   operators $\lambda_p(\mu)$,  defined on $L^p(G)$ and given by convolution by $\mu$,  to be mean ergodic and uniformly mean ergodic. The ergodic properties of the operators $\lambda_p(\mu)$ are related to  the ergodic properties of the measure $\mu$ as well.
\end{abstract}
\maketitle
\section{Introduction}
Von Neumann's mean ergodic theorem proves that the sequence $(T_{[n]})_n$ of  averages of the first $n$  powers of a unitary operator $T$ (its  Ces\`aro  means) always converges to  the projection onto the subspace of fixed vectors of $T$.  After the appearance of von Neumann's mean ergodic theorem, and its Birkhoff's contemporary, the pointwise ergodic theorem, the convergence of averages of measure preserving transforms in diverse  settings and senses rapidly became an important part  of Ergodic Theory, see \cite{dunfschw,kren85} for more details. In this development, an operator   on a Banach space  whose sequence   of Ces\`aro means converges      in the strong operator topology came to be known as  \emph{mean ergodic}. When convergence holds   in the operator norm, the coined term was \emph{uniformly mean ergodic}.

Our approach to ergodicity can be traced back to \cite{yosi38} where Yosida  used weak clustering of  orbits to characterize mean ergodicity of operators on Banach spaces.
 Our contemporary antecedent is to be found in  the work of    Bonet and Doma\'nski \cite{bd1,bd2} that launched  a systematic research on  ergodicity  from an operator theoretic point of view.
 Composition and  multiplication operators in spaces of holomorphic functions have since been   studied \cite{bgjj1,bgjj2,bd1,bd2} with a recent attention to composition operators  acting on spaces of smooth real functions  \cite{fgj,kalmes}.
In this paper, we address the ergodic properties of  those operators on the Banach space $L^p(G)$ of $p$-integrable functions on a locally compact group $G$ that are given by convolution by a fixed measure on $G$.  We will refer to these operators simply as convolution operators.

 When  $G$ is sigma-compact and $\mu$ is a probability measure, the operator  given by convolution  with $\mu$ is a Markov operator, having the Haar measure $\mg$
as sigma-finite invariant measure;  the Markov chain
generated by $\mu$ with state space $G$ is a random walk on $G$.
       If $G$ is compact or Abelian, conditions for ergodicity and mixing of the random walk
are well known, in terms of the algebraic properties of $\mu$ (the It\^o-Kawada and Choquet-Deny theorems). See Remark \ref{ebc} for some more information on this approach.

 Among the most recent papers dealing with convolution operators from a point of view related to  ours, we can mention the monograph by Derighetti \cite{deri11}, with special emphasis on  their  restriction and extension to  and from closed subgroups,   and the papers by Neufang, Salmi, Skalski and Spronk \cite{neufsalmiskalspro}, with focus on the wider frame of quantum groups, and by Mustafayev  \cite{must19,must19jfaa}, where   ergodicity for multipliers on Banach algebras  and     convolution operators on locally compact Abelian groups are studied.

\subsection{Outline and summary of results}
Our approach leads to characterizations of    mean  ergodicity  and uniform mean ergodicity for  convolution operators defined by several classes of measures. These characterizations  can also be used to compare the convergence of averages of convolution powers of measures in the vague topology (mean ergodicity of measures) with  mean ergodicity of the corresponding convolution operators. As  it turns out, while   mean ergodicity of measures  often implies    ergodicity  of their convolution operators, the converse does not hold even for weak versions of ergodicity.

We next outline  our results, showcasing the case of Abelian groups. Our methods however are not intrinsically commutative and most of our results are stated for more general classes of groups. We also  have also tried to determine the limits of what  can be expected beyond our results through  a series of examples, both commutative and noncommutative, that appear, mostly, in Section \ref{sec:trac}.

\begin{theorema}[Theorem \ref{hilbert}, Theorem \ref{pb=me:amen}, Theorem \ref{lambda1}] Let $G$ be a locally compact  group and $\mu\in M(G)$.
  Consider the following conditions on the convolution operator $\lambda_p(\mu)$ (see Section \ref{sec:pre} for undefined notation):
  \begin{itemize}
\item[(a)] $\lambda_p(\mu)$ is mean ergodic.
\item[(b)]  $\lambda_p(\mu)$ is  power bounded.
\item[(c)] $\|\mu\|\leq 1 $.
\end{itemize}
These conditions can be related as follows.
\begin{enumerate}
  \item If $G$ is Abelian and $p=2$, then  (a) and (b) are equivalent.
\item If $G$ is amenable and  $\mu$ is positive,   then all three conditions are equivalent, for any $1<p<\infty$.
    \item If $G$ is compact, $\mu$ positive and $p=1$, then all  three conditions are equivalent as well.
        \end{enumerate}
\end{theorema}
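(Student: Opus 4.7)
The plan is to set up a common scaffolding covering all three parts and then localise the specific ingredient each case requires. In all three parts, the implication (c)$\Rightarrow$(b) is immediate from contractivity of $\lambda_p$ and of convolution on $M(G)$:
\[
\|\lambda_p(\mu)^n\|=\|\lambda_p(\mu^{*n})\|\leq\|\mu^{*n}\|\leq\|\mu\|^n\leq 1.
\]
For parts (1) and (2) the implication (b)$\Rightarrow$(a) is then Lorch's theorem, since $L^p(G)$ is reflexive for $1<p<\infty$. The substantive work thus reduces to (a)$\Rightarrow$(b) in part (1), (a)$\Rightarrow$(c) in part (2), and both (a)$\Rightarrow$(c) and (c)$\Rightarrow$(a) in part (3).

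For part (1), my plan is to use Plancherel: since $G$ is Abelian, the Fourier transform intertwines $\lambda_2(\mu)$ with the multiplication operator $M_{\hat\mu}$ on $L^2(\hat G)$, whence $\|\lambda_2(\mu)^n\|=\|\hat\mu\|_\infty^n$. Mean ergodicity together with the uniform boundedness principle yields $\sup_n\|n^{-1}\sum_{k=0}^{n-1}\hat\mu^k\|_\infty<\infty$. If $|\hat\mu(\chi_0)|>1$ for some $\chi_0\in\hat G$, the geometric-sum identity $\sum_{k=0}^{n-1}z^k=(z^n-1)/(z-1)$ with $z=\hat\mu(\chi_0)$ forces $n^{-1}|\sum_{k=0}^{n-1}\hat\mu(\chi_0)^k|\to\infty$, a contradiction. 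Hence $\|\hat\mu\|_\infty\leq 1$, establishing (b).

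For the (a)$\Rightarrow$(c) implications in parts (2) and (3) the unifying observation is that, for positive $\mu$, the Ces\`aro averages of $\lambda_p(\mu)$ are themselves convolution operators: $\frac{1}{n}\sum_{k=0}^{n-1}\lambda_p(\mu)^k=\lambda_p(\sigma_n)$ where $\sigma_n=\frac{1}{n}\sum_{k=0}^{n-1}\mu^{*k}$ is positive with total variation $\|\sigma_n\|=\frac{1}{n}\sum_{k=0}^{n-1}\|\mu\|^k$. If we can assert $\|\lambda_p(\sigma)\|=\|\sigma\|$ for positive $\sigma$, mean ergodicity (which bounds $\|\lambda_p(\sigma_n)\|$ uniformly by the uniform boundedness principle) forces boundedness of the geometric-type sequence $\{\frac{1}{n}\sum_{k=0}^{n-1}\|\mu\|^k\}$, and hence $\|\mu\|\leq 1$. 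For part (2), the required equality $\|\lambda_p(\sigma)\|=\|\sigma\|$ for positive $\sigma$ on amenable groups is a well-known consequence of Herz's version of the Hulanicki--Reiter theorem (which characterises amenability through $\|\lambda_p(f)\|=\|f\|_1$ for $f\in L^1(G)_+$), extended from $L^1(G)_+$ to $M(G)_+$ by approximation. For part (3), testing $\lambda_1(\sigma)$ directly on the constant function $\Cf_G\in L^1(G)$ (available by finiteness of $\mg$ on compact $G$) gives $\sigma\ast \Cf_G=\|\sigma\|\Cf_G$, so $\|\lambda_1(\sigma)\|\geq\|\sigma\|$, and the reverse inequality is automatic.

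It remains to handle (c)$\Rightarrow$(a) in part (3), where reflexivity fails. My plan is a density argument: since $G$ is compact, $L^2(G)$ embeds densely and continuously in $L^1(G)$; for $\mu\geq 0$ with $\|\mu\|\leq 1$ the operator $\lambda_2(\mu)$ is power bounded on the reflexive space $L^2(G)$ and hence mean ergodic by Lorch's theorem. The Ces\`aro averages $T_{[n]}$ therefore converge strongly on the dense subspace $L^2(G)$ (convergence in $L^2$ entails convergence in $L^1$ because $\mg(G)<\infty$), and the uniform bound $\|T_{[n]}\|_{L^1\to L^1}\leq 1$ propagates the strong convergence to all of $L^1(G)$. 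The main anticipated obstacle is the amenable-group norm identity $\|\lambda_p(\sigma)\|=\|\sigma\|$ for positive $\sigma$; once granted, the rest is a clean assembly of Plancherel, Lorch, and a density step.
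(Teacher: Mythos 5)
Your argument is correct in substance but follows a route that differs from the paper's in several places, most notably in part (3). For part (1) the paper works abstractly with normal operators: mean ergodic $\Rightarrow$ Ces\`aro bounded $\Rightarrow$ $r(T)\leq 1$, and $r(T)=\norm{T}$ for normal $T$ (Theorem \ref{normal} via Proposition \ref{cbradio}); your Plancherel-plus-geometric-series computation at a single character is a concrete instantiation of the same mechanism and is fine. For part (2) the paper feeds amenability in through the Berg--Christensen/Greenleaf fact $r(\lambda_p(\mu))=\norm{\mu}$ for positive $\mu$ (Theorem \ref{cornormamen}) and again uses Ces\`aro boundedness $\Rightarrow$ $r\leq 1$; you instead use the Herz/Hulanicki--Reiter norm identity $\norm{\lambda_p(\sigma)}=\norm{\sigma}$ for positive $\sigma$, applied to the (positive) averaged measures $\sigma_n$ -- a parallel argument with a different standard amenability input, and your approximate-identity extension of the identity from $L^1(G)_+$ to $M(G)_+$ does work. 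The largest divergence is (c)$\Rightarrow$(a) in part (3): the paper proves mean ergodicity of $\lambda_1(\mu)$ by showing $\mu$ is vague-ergodic (Theorem \ref{grenander}) and then invoking vague-SOT sequential continuity of $\lambda_p$ for measures with compact $H_\mu$ (Proposition \ref{rem1}(2), via Proposition \ref{allp}); your transfer from $L^2(G)$ to $L^1(G)$ by density and the uniform bound $\norm{\lambda_1(\mu_{[n]})}\leq 1$ is shorter and perfectly valid for compact $G$ (and does not even use positivity there), though the paper's machinery is what allows the more general statement for noncompact $G$ with $H_\mu$ compact, and the converse direction of Theorem \ref{lambda1}.

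One bookkeeping omission: in part (3) you close the cycle (a)$\Leftrightarrow$(c)$\Rightarrow$(b) but never return from (b). Since for positive $\mu$ one has $\norm{\lambda_1(\mu^n)}=\norm{\mu^n}=\norm{\mu}^n$ (your constant-function test, or Theorem \ref{young}(3), together with multiplicativity of total mass for positive measures), power boundedness immediately forces $\norm{\mu}\leq 1$, i.e.\ (b)$\Rightarrow$(c). Add that line and part (3) is complete.
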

The last statement of Theorem A actually contains all what needs to be known about the case $p=1$, for  $\lambda_1(\mu)$ cannot be mean ergodic unless either $\mu$ is a probability measure and the support of $\mu$ is contained in a compact subgroup of $G$, see Theorem \ref{lambda1}, or $\|\mu\|<1$. In this last case we have even that $(\lambda(\mu^n))_n$ is norm convergent to 0.

\begin{theoremb}[Theorem \ref{hilbert:ume}, Theorem \ref{reflexive}, Theorem \ref{nonreflexive}]
  Let $G$ be a locally compact  Abelian group and let $\mu\in M(G)$. Consider the following conditions on the convolution operator $\lambda_p(\mu)$:
  \begin{itemize}
    \item[(a)] $\lambda_p(\mu)$ is uniformly mean ergodic.
        \item[(b)] $\norm{\lambda_p(\mu)}\leq 1$ and $1$ is not  an accumulation point of the spectrum of $\lambda_p(\mu)$.
  \end{itemize}
Conditions (a) and (b) are then equivalent when any of the following conditions hold:
  \begin{enumerate}
  \item  $p=2$.
  \item  $\mu$ is positive and $1<p<\infty$.
  \item  $G$ is compact, $\mu$ positive  and  $p=1$ or $p=\infty$. In either case  $\lambda_p(\mu)$ is uniformly mean ergodic if and only if $\lambda_\infty(\mu)$ is mean ergodic.
  \end{enumerate}
\end{theoremb}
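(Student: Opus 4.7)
The backbone of the argument is the classical Dunford--Lin criterion: a power-bounded operator $T$ on a Banach space is uniformly mean ergodic if and only if $1$ is either in the resolvent set of $T$ or is a pole of order at most one of its resolvent. Under power-boundedness this condition reduces to the statement that $1$ is not an accumulation point of $\sigma(T)$: if $1$ is isolated in $\sigma(T)$, the Riesz projection $P_1$ associated with $\{1\}$ is defined; the restriction $T|_{P_1 X}$ is power-bounded with spectrum $\{1\}$, hence equals the identity by the standard argument that a quasi-nilpotent perturbation of $I$ cannot be power-bounded unless it vanishes. Consequently, once $\|\lambda_p(\mu)\|\leq 1$ is established, the equivalence (a)$\Leftrightarrow$(b) follows at once; the substantive task in each case is to deduce $\|\lambda_p(\mu)\|\leq 1$ from (a).

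For case (1) I would invoke Plancherel's theorem to identify $\lambda_2(\mu)$ with the multiplication operator $M_{\hat\mu}$ on $L^2(\hat G)$. Uniform mean ergodicity of $M_{\hat\mu}$ implies that the Cesàro means $\tfrac{1}{n}\sum_{k=0}^{n-1}\hat\mu^k$ form a uniformly bounded sequence in $L^\infty(\hat G)$, which, combined with the elementary observation that $\tfrac{1}{n}\sum_{k=0}^{n-1} z^k$ is unbounded in $n$ whenever $|z|>1$, forces $|\hat\mu|\leq 1$ essentially, i.e.\ $\|\lambda_2(\mu)\|=\|\hat\mu\|_\infty\leq 1$. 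The spectrum of $M_{\hat\mu}$ coincides with the essential range of $\hat\mu$, and the equivalence reduces to the corresponding statement for multiplication operators. For case (2) I would use that on an abelian (hence amenable) $G$ one has $\|\lambda_p(\nu)\|=\|\nu\|$ for every positive $\nu\in M(G)$. Since $\tfrac{1}{n}\sum_{k=0}^{n-1}\mu^k$ is positive, the operator norm of the corresponding Cesàro mean equals $\tfrac{1}{n}\sum_{k=0}^{n-1}\|\mu\|^k$; the Cesàro boundedness implied by uniform mean ergodicity then forces $\|\mu\|\leq 1$.

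Case (3) is the delicate one, since reflexivity is lost and Plancherel is not directly applicable. For $p=\infty$ and compact $G$ the argument of (2) adapts: positivity of $\mu$ together with $\|\lambda_\infty(\nu)\|=\|\nu\|$ forces $\|\mu\|\leq 1$ from Cesàro boundedness. For $p=1$ the strategy is to use the duality $\lambda_1(\mu)^{*}=\lambda_\infty(\check\mu)$ (with $\check\mu$ again a positive measure on the abelian group), together with the fact that uniform mean ergodicity transfers between an operator and its adjoint. Finally, the equivalence with mean (rather than uniform mean) ergodicity of $\lambda_\infty(\mu)$ would exploit that on a compact group, convolution with a positive measure has relatively norm-compact orbits in $C(G)$, so that weak convergence of Cesàro means promotes itself to norm convergence. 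I expect the principal obstacle to be a clean handling of this last point: reconciling the weak-type information provided by mean ergodicity on the non-reflexive spaces $L^1$ and $L^\infty$ with the norm convergence required for uniform mean ergodicity, leveraging positivity and the compactness of $G$ in place of the missing reflexivity.
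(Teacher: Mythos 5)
Your ``backbone'' is where the proposal breaks. For an operator on a general Banach space, power-boundedness together with ``$1$ is not an accumulation point of $\sigma(T)$'' does \emph{not} imply uniform mean ergodicity, and the supporting claim that a power-bounded quasinilpotent perturbation of the identity must vanish is the one-sided Gelfand--Hille statement, which is false: the classical example $T=I-V$, with $V$ the Volterra operator on $L^2[0,1]$, is power bounded, has $\sigma(T)=\{1\}$ (so $1$ is trivially isolated), yet $\range(I-T)=\range(V)$ is not closed, so by Theorem \ref{lin74t} $T$ is not uniformly mean ergodic; equivalently, $1$ is isolated in the spectrum but is not a pole of order one of the resolvent. (Gelfand--Hille needs two-sided power bounds.) The equivalence you invoke is really a \emph{normal-operator} fact --- for normal $T$, $\range(I-T)$ is closed iff $1$ is not an accumulation point of $\sigma(T)$ --- so your argument does cover case (1), and your necessity direction (a)$\Rightarrow$(b) is fine in all cases (Dunford's implication (1)$\Rightarrow$(3) of Theorem \ref{lin74t}, plus $r(\lambda_p(\mu))=\|\mu\|=\|\lambda_p(\mu)\|$ for positive measures on amenable groups). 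But the implication (b)$\Rightarrow$(a) for $p\neq 2$, which is the substantive half of cases (2) and (3), is left without a valid proof.

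That implication is exactly where the paper works hardest, and none of its ingredients appear in your sketch. For $1<p<\infty$ one first uses amenability to get the spectral inclusion $\sigma(\lambda_2(\mu))\subseteq\sigma(\lambda_p(\mu))$ (Proposition \ref{lemma6.6}, via Herz's restriction theorem), so that (b) yields uniform mean ergodicity of $\lambda_2(\mu)$ by the normal case; then Proposition \ref{paracadap} transfers ``$1$ is at worst a simple pole of the resolvent'' from $p=2$ to the given $p$ by a vector-valued holomorphic extension argument (Proposition \ref{holomorphicextension}) using the separating functionals $T\mapsto\langle Tf,g\rangle$, $f,g\in C_{00}(G)$, and Theorem \ref{lin74t} concludes. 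For case (3) one needs in addition the duality $\lambda_1(\mu)^\ast=\lambda_\infty(\mu^\ast)$, Theorem \ref{lambda1} (forcing $H_\mu$ compact when $\|\mu\|=1$), and Lotz's Theorem \ref{Lotz85} to pass from mean ergodicity of $\lambda_\infty(\mu)$ to uniform mean ergodicity. Your proposed substitute --- relatively norm-compact orbits in $C(G)$ --- can at best upgrade weak convergence of the Ces\`aro means to norm convergence for each fixed $f$, i.e.\ to strong operator convergence, never to convergence in operator norm (and $\lambda_\infty(\delta_s)$ already shows convolution by a measure need not be compact), so it cannot replace Lotz's theorem; you correctly identified this as the principal obstacle, and together with the $p\neq2$ sufficiency above it is a genuine gap rather than a technical loose end.
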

In Theorem B, commutativity is only needed to make sure the operator $\lambda_2(\mu)$ is normal. Hence the theorem is also true under this weaker assumption.

 Many of our results on ergodicity or mean ergodicity of $\lambda_p(\mu)$ depend on  conditions on the ambient group $G$. To make them depend, as it is naturally expected,  on the subgroup  $H_\mu$  generated by the support of $\mu$, we have had to relate the ergodic behaviour of $\lambda_p(\mu)$ as an operator on $L^p(G)$ with its behaviour as  an operator on $L^p(H_\mu)$. Since this is a  technically intrincate issue, we have decided to deal with it in an Appendix at the end of the paper.

\section{Preliminaries}\label{sec:pre}
In this section we gather
  the basic definitions and basic facts  around our two main subjects: ergodicity of operators and convolution operators.
 \subsection{Ergodic Operators}
\begin{notation}
  If $T\in\mbx$ is a bounded linear operator on a Banach space $X$, $T_{[n]}$ will denote the Ces\`aro means:
\[ T_{[n]}=\frac{1}{n}\sum_{k=1}^n T^k.\]
\end{notation}

\begin{definition}
We say that a bounded linear operator $T\in \mbx$ is:
 \begin{enumerate}  \item \emph{Weakly mean Ergodic} if there is $P\in \mbx$ such that $\lim_{n\to \infty}  T_{[n]}=P$ in the weak operator topology.
  \item \emph{Mean Ergodic} if there is $P\in \mbx$ such that $\lim_{n\to \infty}  T_{[n]}=P$ in the strong operator topology.
 \item \emph{Uniformly Mean Ergodic} if   there is $P\in \mbx$ such that $\lim_{n\to \infty}  T_{[n]}=P$ in  the  operator norm.
 \end{enumerate}
\end{definition}
\begin{notation}\label{def:pt}
  If $T\in\mbx$ is a bounded linear operator on a Banach space $X$ for which $\Fix{T}$ is a complemented subspace  we will denote by    $P_T$ the projection operator onto $\Fix{T}$.
\end{notation}
The following two basic facts on  mean ergodicity of operators can  be found in or deduced from Section 8.4 of \cite{eisnetal15}.
\begin{theorem}
\label{basic:me}
Let $T$ be a bounded linear operator on a Banach space $X$ such that $\norm{T^n}/n$ converges to 0 in the strong operator topology. Then
\begin{enumerate}
  \item  $T$ is mean ergodic if and only if $X=\Fix{T}\oplus \overline{\range{I-T}}$.
  \item If $T$ is mean ergodic, then $\lim_n T_{[n]}=P_T$ and $P_T T=P_T=T P_T$.
\end{enumerate}
\end{theorem}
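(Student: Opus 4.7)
The plan is to follow the classical proof of the mean ergodic theorem, interpreting the hypothesis as saying that $T^n x/n \to 0$ for every $x \in X$. The key algebraic identities to exploit are
\[
TT_{[n]}-T_{[n]} \;=\; T_{[n]}T - T_{[n]} \;=\; \frac{T^{n+1}-T}{n}, \qquad I-T_{[n]} \;=\; \frac{1}{n}(I-T)\sum_{k=1}^{n}\sum_{j=0}^{k-1}T^{j},
\]
both obtained from the definition of $T_{[n]}$ by telescoping.

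For the forward direction of~(1), assume $T_{[n]}\to P$ in SOT. The Banach--Steinhaus theorem then guarantees $\sup_n\norm{T_{[n]}}<\infty$, so $P\in\mbx$. Applying $T$ to the limit and using the first identity above together with the hypothesis $\norm{T^{n+1}x}/n\to 0$ shows that $TPx=Px$, so $\range(P)\subseteq\Fix{T}$; the second identity places $(I-T_{[n]})x$ inside $\range(I-T)$, hence $(I-P)x\in\overline{\range(I-T)}$. This yields the decomposition $X=\Fix{T}+\overline{\range(I-T)}$. To see directness, observe that $T_{[n]}$ fixes $\Fix{T}$ pointwise, while on $\range(I-T)$ one has $T_{[n]}(I-T)z=(Tz-T^{n+1}z)/n\to 0$; by the uniform bound this extends to $\overline{\range(I-T)}$, so $P$ vanishes on $\overline{\range(I-T)}$ and the intersection of the two summands is $\{0\}$.

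For the reverse direction of~(1), given the decomposition let $P$ be the projection onto $\Fix{T}$ along $\overline{\range(I-T)}$ (bounded by the closed graph theorem). On $\Fix{T}$ one has $T_{[n]}y=y=Py$, and on $\range(I-T)$ the identity $T_{[n]}(I-T)z=(Tz-T^{n+1}z)/n$ again gives $T_{[n]}\to 0=P$ in norm at each such vector. To pass to the closure $\overline{\range(I-T)}$ one needs $\sup_n\norm{T_{[n]}}<\infty$; this is the only place where some care is required, and it is the point where one uses (or assumes in addition) that $(T_{[n]})$ is uniformly bounded, which Eisner et al.\ secure through the accompanying Cesàro boundedness that in their setting is equivalent to $T^n/n\to 0$ strongly.

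Finally, (2) is a direct consequence: passing to the SOT-limit in $TT_{[n]}=T_{[n]}+(T^{n+1}-T)/n$ and in the analogous right-multiplication identity shows $TP_T=P_T=P_TT$, and the limit is obviously the projection constructed in (1), namely $P_T$. The main obstacle, as indicated, is simply ensuring the uniform boundedness of the averages so that the argument on the dense subspace $\Fix{T}\oplus\range(I-T)$ extends continuously; everything else is the manipulation of the two telescoping identities above.
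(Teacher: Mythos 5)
Your forward implication in (1) and all of item (2) are correct, and they follow the standard telescoping argument; note that the paper itself does not write a proof but delegates it to Section 8.4 of \cite{eisnetal15}. The problem is the reverse implication of (1), at exactly the point you flag: passing from $\range(I-T)$ to its closure needs $\sup_n\norm{T_{[n]}}<\infty$, and your justification --- that in the setting of Eisner et al.\ Cesàro boundedness ``is equivalent to $T^nx/n\to 0$ strongly'' --- is false; the two conditions are independent, which is precisely why Theorem 8.22 of \cite{eisnetal15} lists them as two separate hypotheses. In one direction, $T=\left(\begin{smallmatrix}-1&2\\0&-1\end{smallmatrix}\right)$ is Cesàro bounded while $\norm{T^n}\sim 2n$. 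In the other, take $X=\bigoplus_{\ell^2}\C^2$ and $T=\bigoplus_m B_m$ with $B_m=\left(\begin{smallmatrix}1-\frac1m & m^{-1/2}\\ 0 & 1-\frac1m\end{smallmatrix}\right)$: then $\norm{T^n}=O(\sqrt{n})$, so $T^nx/n\to 0$ for every $x$ (under either reading of the hypothesis), but the $(1,2)$ entry of $(B_m)_{[m]}$ is at least $\sqrt{m}/(2e)$, so $(T_{[n]})_n$ is unbounded and, by Banach--Steinhaus, cannot converge strongly. Since each $I-B_m$ is invertible, this $T$ has $\Fix{T}=\{0\}$ and $\overline{\range(I-T)}=X$, so it satisfies the stated hypothesis and the decomposition yet is not mean ergodic: the gap cannot be closed as stated, because the ``if'' part of (1) is actually false without an extra boundedness assumption.

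In other words, your strategy is the right one (it is the proof of the result being quoted), but the missing uniform bound has to be assumed, not derived. With the hypotheses of \cite[Theorem 8.22]{eisnetal15} --- $T$ Cesàro bounded \emph{and} $T^nx/n\to 0$ for all $x$, conditions satisfied in every application of Theorem \ref{basic:me} made in the paper, where the operators involved are power bounded --- your argument is complete: Cesàro boundedness is automatic in the ``only if'' direction by Banach--Steinhaus, and in the ``if'' direction it provides exactly the uniform bound needed to extend $T_{[n]}\to 0$ from $\range(I-T)$ to its closure.
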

We next state a general version of Yosida's mean ergodic Theorem. We include a proof for the sake of completeness. It does not include new ideas but those of Yosida. See \cite{yosi38} for the original proof and \cite[Theorem 2.4]{abr}, \cite[Theorem 8.22]{eisnetal15} or \cite[Theorem 1.3]{Petersen} for similar statements.
\begin{theorem}
\label{weakyosida}
Let $X$ be a Banach space and let $\tau$ be a locally convex topology on $X$  compatible with $\sigma(X,X^*)$. Then there is a bounded linear operator $P$ on $X$ such that $(T_{[n]}x)_n$ is $\tau$-convergent to  $P(x)$ for each $x\in X$ if and only the following two conditions hold for each $x\in X$:
\begin{enumerate}
\item $\tau-\lim\frac{ T^n(x)}{n}=0$ and
\item $\{T_{[n]}(x)\colon n\in \N\}$ is relatively weakly compact.
\end{enumerate}
\end{theorem}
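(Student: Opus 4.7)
Necessity is quick. If $T_{[n]}x \to Px$ in $\tau$, then from the identity $T^n = nT_{[n]} - (n-1)T_{[n-1]}$ one gets $T^n x/n = T_{[n]}x - \tfrac{n-1}{n}T_{[n-1]}x \to Px - Px = 0$ in $\tau$, giving (1). For (2), a $\tau$-convergent sequence is in particular weakly convergent (since $\tau$ is compatible with $\sigma(X,X^*)$), and the weak closure of a weakly convergent sequence together with its limit is weakly compact, yielding (2).

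For sufficiency, I would proceed along Yosida's original lines. Fix $x \in X$. By (2), the net $(T_{[n]}x)_n$ has a subnet $T_{[n_\alpha]}x$ converging weakly to some $y\in X$. The first key step is to see that $y$ is a fixed point of $T$. This rests on the identity
\[
 TT_{[n]}x - T_{[n]}x \;=\; \tfrac{1}{n}\bigl(T^{n+1}x - Tx\bigr),
\]
whose right-hand side tends to $0$ weakly: $T^{n+1}x/n = T^n(Tx)/n \to 0$ in $\tau$ by (1) applied to $Tx$, and $Tx/n \to 0$ trivially. Since $T$ is weak-to-weak continuous, passing to the limit gives $Ty = y$, i.e.\ $y \in \Fix{T}$.

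The second step is the Yosida decomposition $X = \Fix{T} \oplus \overline{\range(I-T)}$. For the $x$ above, observe that
\[
 x - T_{[n]}x \;=\; \tfrac{1}{n}\sum_{k=1}^n (x - T^k x) \;\in\; \range(I-T),
\]
since each summand factors as $(I-T)\bigl(\sum_{j=0}^{k-1}T^j x\bigr)$. Taking the weak limit along the subnet, $x - y$ lies in the weak closure of $\range(I-T)$, which by Hahn--Banach/Mazur coincides with the norm closure. Thus $x = y + z$ with $y\in \Fix{T}$ and $z\in \overline{\range(I-T)}$.

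To conclude, I would show that $T_{[n]}x \to y$ in $\tau$, independently of the subnet. On $\Fix{T}$ one has $T_{[n]}y = y$. On $\range(I-T)$, if $z' = w - Tw$, then $T_{[n]}z' = \tfrac{1}{n}(Tw - T^{n+1}w) \to 0$ in $\tau$ by (1). By (2) each orbit $\{T_{[n]}x\}$ is weakly bounded, hence norm bounded, so the uniform boundedness principle yields $M:=\sup_n\norm{T_{[n]}}<\infty$; this lets me extend the vanishing of $T_{[n]}$ from $\range(I-T)$ to its norm closure. Since $\tau$ lies between $\sigma(X,X^*)$ and the Mackey topology $\mu(X,X^*)$, it is coarser than the norm topology, so norm-to-zero implies $\tau$-to-zero. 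Hence $T_{[n]}x = T_{[n]}y + T_{[n]}z \to y$ in $\tau$. Defining $Px := y$ gives a linear operator, and the inequality $\norm{Px}\le M\norm{x}$ follows from the fact that $Px$ belongs to the weak (equivalently, $\tau$-)closure of $\{T_{[n]}x\}$ and norm-closed balls are weakly closed.

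\textbf{Main obstacle.} The delicate point is ensuring that the limit $y$ does not depend on the subnet and that convergence really takes place in $\tau$ rather than merely in the weak topology. Both are resolved by the decomposition argument together with the observation that $\tau$ is automatically coarser than the norm topology, so that norm convergence of $T_{[n]}z$ on $\overline{\range(I-T)}$ upgrades to $\tau$-convergence for free.
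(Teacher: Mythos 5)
Your proposal is correct and follows essentially the same route as the paper's proof, which is Yosida's original argument: extract a weak cluster point of $(T_{[n]}x)_n$, show it is a fixed point, write $x-y$ as an element of $\overline{\range(I-T)}$ via Mazur, and use Banach--Steinhaus plus the vanishing of $T_{[n]}$ on $\overline{\range(I-T)}$ to get $\tau$-convergence and the boundedness of $P$. The only cosmetic difference is that you work with subnets where the paper invokes Eberlein's theorem to pass to a subsequence; all the substantive steps coincide.
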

\begin{proof}
The necessity of condition (1) comes from
\[\frac{T^n(x)}{n}=T_{[n]}(x)-\frac{n-1}{n}T_{[n-1]}(x), \quad \mbox{for any } x\in X.\]
Since $\tau$ is stronger than the weak topology, Condition (2) is obviously also necessary.

We now check the sufficiency of (1) and (2).
We  first observe that $T_{[n]}(I-T)=\frac{1}{n}(T-T^{n+1})$, hence $T_{[n]}(I-T)(x)$  is $\tau$-convergent to 0 for every $x\in X$. Since, by the Banach Stenihaus theorem,   $(\|T_{[n]}\|)_n$ is a bounded sequence, we get
 \begin{equation}\label{yosi-t}\tau-\lim_n T_{[n]}(v)=0\mbox{ for every } v\in  \overline{\range(I-T)}.\end{equation}

Let now  $x\in X$. As $ \{T_{[n]}(x)\colon n\in \N\}$ is relatively weakly compact, we get from  Eberlein's theorem an increasing  sequence $(n_k)$ of natural numbers and $y_{_{x}}\in X$ such that $(T_{[n_k]}(x))$ is weakly convergent to $y_{_{x}}$. Then, by the preceding paragraph, and using that $T T_{[n_k]}=T_{[n_k]}T$,
\begin{equation}\label{yosfixed}
0=\sigma(X,X^\ast)-\lim_k
T_{[n_k]}\left((I-T)(x)\right)=y_{_{x}}-T(y_{_{x}}).
\end{equation}
On the other hand
\begin{align*}
x-y_{_{x}}&=\sigma(X,X^\ast)-\lim_k \frac{1}{n_k} \left(\sum_{n=1}^{n_k}(x-T^n(x))\right)\\&=\sigma(X,X^\ast)-\lim_k \frac{1}{n_k}\Bigl(\sum_{n=1}^{n_k}(I-T)
  \left((I+T+\cdots+T^{n-1})(x)\right)\Bigr).
\end{align*}
From this and  $\range(I-T)$ being a vector subspace we deduce that $x-y_{_{x}}$ is in  the norm closure $\overline{\range{I-T}}$. Applying \eqref{yosi-t} once more we see that $\tau-\lim_n T_{[n]}(x-y_{_{x}})=0$. From this and \eqref{yosfixed} we conclude  that, putting $Px=y_{_{x}}$,
\[\lim_n T_{[n]}x=Px, \quad \mbox{ for every } x\in X. \]
Since $\norm{y_{_{x}}}\leq \sup_k \norm{T_{[n_k]}}\norm{x}$, it is clear that $P\in\ll(X)$.
\end{proof}
\begin{corollary}
\label{MEth} (Mean ergodic theorem) If $X$ is reflexive, the sequence $(T_{[n]})_n$ is bounded (in other words, $T$ is Ces\`aro bounded) and ${\displaystyle \lim_n \frac{ T^n}{n}=0}$  in the strong operator topology, then $T$ is mean ergodic.
\end{corollary}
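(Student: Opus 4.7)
The plan is to combine Theorem \ref{weakyosida} with Theorem \ref{basic:me}. First I would verify the hypotheses of Theorem \ref{weakyosida} with $\tau$ equal to the weak topology $\sigma(X,X^*)$: condition (1) is an immediate consequence of the assumption $\lim_n T^n/n=0$ in the strong operator topology, since norm convergence implies weak convergence; condition (2) follows by combining Ces\`aro boundedness with reflexivity of $X$, as every norm bounded sequence in a reflexive Banach space is relatively weakly compact by Kakutani's theorem. Theorem \ref{weakyosida} then produces $P\in \mbx$ with $T_{[n]}x \to Px$ weakly for every $x\in X$.

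The remaining task is to upgrade this weak convergence to convergence in the strong operator topology, for which I would invoke Theorem \ref{basic:me}(1). Since the hypothesis $\lim_n T^n/n=0$ in SOT is already in place, it suffices to establish the direct sum decomposition $X=\Fix{T}\oplus\overline{\range(I-T)}$. The sum is total because the proof of Theorem \ref{weakyosida} above explicitly writes, for any $x\in X$, the decomposition $x=Px+(x-Px)$ with $Px\in\Fix{T}$ by \eqref{yosfixed} and $x-Px\in \overline{\range(I-T)}$ by the subsequent norm closure argument. The sum is direct because, thanks to \eqref{yosi-t}, any $v\in\Fix{T}\cap\overline{\range(I-T)}$ satisfies simultaneously $T_{[n]}v=v$ for all $n$ and $T_{[n]}v\to 0$ weakly, forcing $v=0$. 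Theorem \ref{basic:me}(1) then delivers mean ergodicity.

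There is no serious obstacle, as all the analytic work is already encoded in Theorem \ref{weakyosida}. The one subtlety worth flagging is that applying Theorem \ref{weakyosida} with $\tau=\sigma(X,X^*)$ yields only weak mean ergodicity of $T$, not mean ergodicity in the sense we want, and that the upgrade to strong convergence relies not on a compactness or uniform convexity trick but on the fact that Theorem \ref{basic:me}(1) characterises mean ergodicity purely in terms of the algebraic-topological splitting $X=\Fix{T}\oplus\overline{\range(I-T)}$, a splitting which the proof of Theorem \ref{weakyosida} has effectively already produced.
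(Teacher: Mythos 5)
Your argument is correct, but it takes a longer route than the one the paper intends. The corollary is stated right after Theorem \ref{weakyosida} precisely because it is a \emph{direct} application of it: the norm topology of a Banach space is itself a locally convex topology compatible with the duality $\sigma(X,X^*)$ (it is the Mackey topology of the pair), so one may take $\tau$ to be the norm topology. Condition (1) of Theorem \ref{weakyosida} is then exactly the hypothesis that $T^n x/n\to 0$ in norm for each $x$, and condition (2) holds because Ces\`aro boundedness makes each orbit $\{T_{[n]}x\colon n\in\N\}$ bounded, hence relatively weakly compact by reflexivity; the theorem then yields norm convergence of $T_{[n]}x$ for every $x$, i.e.\ mean ergodicity, with no further work. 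You instead apply the theorem only with $\tau=\sigma(X,X^*)$, obtaining weak mean ergodicity, and then upgrade to strong convergence via Theorem \ref{basic:me}(1) by extracting from the proof of Theorem \ref{weakyosida} the splitting $X=\Fix{T}\oplus\overline{\range(I-T)}$ (totality from $Px\in\Fix{T}$ and $x-Px\in\overline{\range(I-T)}$, directness from \eqref{yosi-t} together with $T_{[n]}v=v$ for fixed vectors). Every step of that upgrade is sound --- the Ces\`aro boundedness you assume supplies the boundedness of $(\norm{T_{[n]}})_n$ that \eqref{yosi-t} needs, and the hypothesis $T^n/n\to 0$ in the strong operator topology is what Theorem \ref{basic:me} requires --- so your proof is valid; it simply reconstructs, through the Dunford-type decomposition, a conclusion that the stronger choice of $\tau$ delivers in one stroke. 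The small gain of your route is that it makes explicit where the ergodic projection lives and why the decomposition is direct; the cost is that it re-proves part of what Theorem \ref{weakyosida} already encapsulates.
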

\begin{proposition}
\label{cbradio}
If $T$ is Ces\`aro bounded then $r(T)\leq 1$.
\end{proposition}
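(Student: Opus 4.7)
The plan is to use the Gelfand formula $r(T)=\lim_n\|T^n\|^{1/n}$ and bound $\|T^n\|$ in terms of the Cesàro means. The key observation is the telescoping identity
\[
T^n \;=\; n\,T_{[n]} \;-\; (n-1)\,T_{[n-1]},
\]
valid for $n\geq 2$ (and trivially $T^1=T_{[1]}$ for $n=1$). This is just the identity already used at the start of the proof of Theorem \ref{weakyosida}, rearranged.

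From this identity I would deduce, setting $M:=\sup_n\|T_{[n]}\|<\infty$,
\[
\|T^n\|\;\le\;n\,\|T_{[n]}\|+(n-1)\,\|T_{[n-1]}\|\;\le\;(2n-1)\,M.
\]
Taking $n$-th roots, $\|T^n\|^{1/n}\le\bigl((2n-1)M\bigr)^{1/n}\longrightarrow 1$ as $n\to\infty$, since both $M^{1/n}\to 1$ and $(2n-1)^{1/n}\to 1$.

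Therefore $r(T)=\lim_n\|T^n\|^{1/n}\le 1$, which is the claim. There is no real obstacle here: the whole proof is the one-line identity plus the Gelfand spectral radius formula; the only thing worth double-checking is that Cesàro boundedness is formulated with the averages starting at $k=1$ (as in the paper's notation), so that the rearrangement $T^n=nT_{[n]}-(n-1)T_{[n-1]}$ holds exactly as written.
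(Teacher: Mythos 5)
Your proof is correct and rests on the same two ingredients as the paper's: the identity $T^n=nT_{[n]}-(n-1)T_{[n-1]}$ and the spectral radius formula. The only difference is presentational — you argue directly (a linear bound on $\|T^n\|$ plus Gelfand's formula), whereas the paper argues by contradiction, noting that $r(T)>1$ would force $\|T^n\|/n$ to be unbounded, which the same identity forbids.
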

\begin{proof}
 If $r(T)>1$ then there is $M>1$ such that,  eventually, $\norm{T^n}\geq M^n$, which makes the sequence $(\frac{T^n}{n})_n$ unbounded. Since  $\frac{T^n}{n}=T_{[n]}-\frac{n-1}{n}T_{[n-1]}$, this goes against Ces\`aro-boundedness of $T$.
\end{proof}
\begin{corollary}
  Let $X$ be a Banach space.
     \label{lem:as}If $T$ is weakly mean ergodic, then $r(T)\leq 1$.
\end{corollary}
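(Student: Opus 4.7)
The plan is to reduce the statement to Proposition \ref{cbradio} by showing that weak mean ergodicity forces $T$ to be Ces\`aro bounded.

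First, suppose $T_{[n]}\to P$ in the weak operator topology for some $P\in\ll(X)$. Then for every $x\in X$ and every $x^*\in X^*$, the scalar sequence $\bigl(\langle T_{[n]}x,x^*\rangle\bigr)_n$ converges, hence is bounded. Applying the uniform boundedness principle to the family $\{T_{[n]}x:n\in\N\}\subseteq X$ (viewed as functionals on $X^*$), I obtain that $(\|T_{[n]}x\|)_n$ is bounded for every $x\in X$. A second application of the Banach--Steinhaus theorem, this time to the family $\{T_{[n]}:n\in\N\}\subseteq\ll(X)$, yields $\sup_n\|T_{[n]}\|<\infty$; that is, $T$ is Ces\`aro bounded.

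Once Ces\`aro boundedness is established, Proposition \ref{cbradio} applies directly and gives $r(T)\leq 1$, finishing the proof. There is no real obstacle here: the argument is a standard double application of uniform boundedness, and the spectral radius bound has already been done in the preceding proposition. The only thing to be careful about is not confusing the weak operator topology with the weak topology on $X$; but since weak operator convergence of $T_{[n]}$ to $P$ implies, in particular, weak convergence of each orbit $T_{[n]}x$ to $Px$ in $X$, both invocations of uniform boundedness are legitimate.
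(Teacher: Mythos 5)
Your argument is correct and is exactly the route the paper intends: the corollary is stated as an immediate consequence of Proposition \ref{cbradio}, with weak mean ergodicity giving Ces\`aro boundedness via the Banach--Steinhaus theorem (the same reduction the paper itself invokes later, e.g.\ in the proof of Theorem \ref{pb=me:amen}). Your double application of uniform boundedness is the standard justification of that step, so nothing is missing.
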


 And, next, we collect some basic facts on uniform mean ergodicity.
 (1) is an immediate consequence of the spectral radius formula, (2) comes again from $T_{[n]}-\frac{n-1}{n}T_{[n-1]}=\frac{T^n}{n}$ and (3) is the classical Yosida Kakutani theorem \cite[Theorem 4, Corollary (i)]{YK}.
 \begin{theorem}\label{norms}
 Let  $T$ be a bounded linear operator on  a Banach space $X$:
\begin{enumerate}
\item \label{r(T)}If $r(T)<1$ then $T$ is uniformly mean ergodic (further, $T^n$ converges to 0 in the norm operator topology).
\item\label{UMEnorm0} If $T $ is uniformly mean ergodic,  then ${\displaystyle \lim_{n\to \infty}\frac{\norm{T^n}}{n}=0}$.

\item \label{compactme} If $T$ is power bounded, compact and mean ergodic then it is uniformly mean ergodic.
\end{enumerate}

\end{theorem}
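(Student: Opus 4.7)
The plan follows the three hints given in the statement. For (1), the spectral radius formula $r(T)=\lim_n \norm{T^n}^{1/n}$ lets me pick $c$ with $r(T)<c<1$ so that $\norm{T^n}\leq c^n$ for all sufficiently large $n$. Hence $T^n\to 0$ in operator norm, and since the Ces\`aro average of any norm-null sequence in a Banach space is itself norm-null, $T_{[n]}\to 0$ in operator norm, so $T$ is uniformly mean ergodic with limit $P=0$.

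For (2), I would use directly the algebraic identity $T^n/n = T_{[n]}-\tfrac{n-1}{n}T_{[n-1]}$ already exploited in Proposition \ref{cbradio}. Uniform mean ergodicity says $T_{[n]}\to P$ in operator norm for some $P$, and the right-hand side of the identity then converges in norm to $P-P=0$, giving $\norm{T^n}/n\to 0$.

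Part (3) is the Yosida--Kakutani theorem and is where the real work lies. Assume $T$ is compact, power bounded, and mean ergodic. By Theorem \ref{basic:me} one has the topological decomposition $X=\Fix{T}\oplus \overline{\range(I-T)}$ together with $T_{[n]}\to P_T$ strongly, where $P_T$ is the projection onto $\Fix{T}$ along $\overline{\range(I-T)}$. On $\Fix{T}$ both $T_{[n]}$ and $P_T$ act as the identity, so there is nothing to prove. On the range side, the identity $T_{[n]}(I-T)=(T-T^{n+1})/n$ combined with power-boundedness (say $\norm{T^n}\leq M$) gives $\norm{T_{[n]}(I-T)}\leq 2M/n\to 0$. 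Compactness now enters through Riesz--Schauder: $I-T$ is Fredholm, so $\range(I-T)$ is closed and the induced map $\widetilde{I-T}\colon X/\Fix{T}\to \range(I-T)$ is a Banach space isomorphism by the open mapping theorem. Composing with its inverse converts $\norm{T_{[n]}(I-T)}\to 0$ into $\norm{T_{[n]}|_{\range(I-T)}}\to 0$, and together with the splitting this yields the desired norm convergence $T_{[n]}\to P_T$.

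The main technical obstacle is precisely this last step: without compactness one still has $\norm{T_{[n]}(I-T)}\to 0$, but this only controls the composition $T_{[n]}\circ(I-T)$, not $T_{[n]}$ as an operator on the (possibly non-closed) range of $I-T$. Compactness, via the Fredholm property, is what closes $\range(I-T)$ and turns the quotient-to-range map into an isomorphism; I do not see how to upgrade strong convergence to norm convergence on $\overline{\range(I-T)}$ without this ingredient.
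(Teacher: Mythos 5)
Your proposal is correct. For parts (1) and (2) you argue exactly as the paper does: (1) via the spectral radius formula (your extra observation that Ces\`aro averages of a norm-null sequence are norm-null is the only addition), and (2) via the identity $T^n/n=T_{[n]}-\frac{n-1}{n}T_{[n-1]}$.

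The difference is in part (3), where the paper offers no argument at all but simply cites the classical Yosida--Kakutani theorem, whereas you supply a complete proof. Your route is the natural one and it works: mean ergodicity plus power boundedness give the splitting $X=\Fix{T}\oplus\overline{\range(I-T)}$ with bounded projections; the identity $T_{[n]}(I-T)=\frac{1}{n}(T-T^{n+1})$ and the power bound give $\norm{T_{[n]}(I-T)}\le 2M/n$; and compactness, through Riesz--Schauder, makes $I-T$ Fredholm, so $\range(I-T)$ is closed and the induced bijection $X/\Fix{T}\to\range(I-T)$ is an isomorphism. Since $T_{[n]}(I-T)$ kills $\Fix{T}$, it factors through the quotient, and composing with the inverse isomorphism yields $\norm{T_{[n]}\restriction_{\range(I-T)}}\le \norm{(\widetilde{I-T})^{-1}}\,\norm{T_{[n]}(I-T)}\to 0$; together with $T_{[n]}=P_T$ on $\Fix{T}$ and the boundedness of the projections this gives $\norm{T_{[n]}-P_T}\to 0$. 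Your closing remark correctly identifies where compactness is indispensable: without the Fredholm property the range need not be closed and the quotient-to-range map need not be invertible, so the estimate on $T_{[n]}(I-T)$ cannot be transferred to $T_{[n]}$ on $\overline{\range(I-T)}$. In short, what the citation to \cite{YK} buys the paper is brevity; what your argument buys is a self-contained proof, at the cost of invoking Riesz--Schauder theory, and it is essentially the standard proof of this special case of the Yosida--Kakutani theorem.
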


Uniform mean ergodicity can  be characterized  in a functional analytic way.
  \begin{theorem}[Dunford, Theorem 8 of \cite{Dunford43}, Lin, Theorem of \cite{lin74}]\label{lin74t}

Let $T$ be a bounded linear operator on the Banach space $X$.  The following assertions are equivalent:
\begin{enumerate}
  \item $T$ is uniformly mean ergodic.
  \item $\range(I-T)^2$ is closed and $\lim_n\frac{\|T^n\|}{n}=0$.
  \item Either $1\in \varrho(T)$ or $1$ is a pole of order 1 of the resolvent mapping $R(z,T)$ and $\lim_n\frac{\|T^n\|}{n}=0$.
  \item $\range(I-T)$ is closed and $\lim_n\frac{\|T^n\|}{n}=0$.
 \item $\range(I-T)$ is closed, $X=\range(I-T)\oplus \ker(I-T)$ and $\lim_n\frac{\|T^n\|}{n}=0$.
\end{enumerate}

If in addition we have $\ker (I-T)=\{0\}$ then all the satements are equivalent to
\begin{itemize}
\item[{\em (6)}] $1\notin \sigma(T)$ and $\lim_n\frac{\|T^n\|}{n}=0$.
\end{itemize}
\end{theorem}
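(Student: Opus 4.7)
The plan is to run the cycle $(1)\Rightarrow(5)\Rightarrow(4)\Rightarrow(1)$, attach $(2)$ by a separate argument promoting closedness of $\range(I-T)^2$ to closedness of $\range(I-T)$, and identify $(3)$ with $(5)$ via classical Riesz spectral theory. Whenever $(1)$ holds, the condition $\|T^n\|/n\to 0$ is available for free from Theorem \ref{norms}(\ref{UMEnorm0}). Throughout I will lean on two elementary identities: $(I-T)T_{[n]} = T_{[n]}(I-T) = (T - T^{n+1})/n$, and the factorisation $I - T_{[n]} = (I-T)U_n$ with $U_n = \tfrac{1}{n}\sum_{k=1}^n\sum_{j=0}^{k-1} T^j$.

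For $(1)\Rightarrow(5)$, if $T_{[n]}\to P$ in operator norm then the first identity gives $(I-T)P = P(I-T) = 0$, so $\range P = \ker(I-T)$ (the reverse inclusion because $T_{[n]}$ fixes $\ker(I-T)$ pointwise) and $\overline{\range(I-T)}\subseteq\ker P$. To upgrade this to closedness of $\range(I-T)$, I would note that $T_{[n]}|_{\ker P}\to 0$ in norm, so $I-T_{[n]}|_{\ker P}$ is invertible on $\ker P$ for large $n$; combining this with the factorisation $I-T_{[n]} = (I-T)U_n$ yields $\ker P\subseteq\range(I-T)$, and hence $\range(I-T)=\ker P$ is closed. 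The step $(5)\Rightarrow(4)$ is trivial. For $(4)\Rightarrow(1)$, the assumption $\|T^n\|/n\to 0$ together with the identity $T^n x = x - n(I-T)x$ (valid whenever $(I-T)x$ is a fixed vector of $T$) yields $\ker(I-T)\cap\range(I-T)=\{0\}$; open mapping applied to $(I-T):X/\ker(I-T)\to Z:=\range(I-T)$ then forces $\|T_{[n]}|_Z\|\to 0$, and a Neumann-series perturbation together with the $T$-invariance of $Z$ makes $(I-T)|_Z:Z\to Z$ bijective. Consequently $X=\ker(I-T)\oplus Z$ admits a continuous projection $P$, and $T_{[n]}\to P$ in norm.

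The implication $(4)\Rightarrow(2)$ is immediate because the bijectivity of $(I-T)|_Z$ obtained along the way forces $\range(I-T)^2=\range(I-T)$, while $(3)\Leftrightarrow(5)$ is the standard characterisation of $1$ being either a resolvent point or a simple pole of $R(z,T)$: the former case collapses to $\ker(I-T)=\{0\}$ in $(5)$, and the latter is the classical description of the topological direct-sum decomposition with closed range. The final clause involving $(6)$ is then automatic since $\ker(I-T)=\{0\}$ in $(5)$ forces $X=\range(I-T)$, i.e.\ $1\notin\sigma(T)$. The main obstacle I expect is $(2)\Rightarrow(4)$: the plan is first to promote the kernel condition to $\ker(I-T)=\ker(I-T)^2$ by the same telescoping used above, and then, given $y=\lim(I-T)x_n\in\overline{\range(I-T)}$, to write $(I-T)y=(I-T)^2 w$ (using closedness of $\range(I-T)^2$) so that $y-(I-T)w\in\ker(I-T)\cap\overline{\range(I-T)}$. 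The delicate point is to show this last intersection is trivial; for this I would apply open mapping to $(I-T)^2$ viewed as an isomorphism from $X/\ker(I-T)$ onto $\range(I-T)^2$ to correct the $x_n$ by elements of $\ker(I-T)$, producing a modified sequence tending to $0$ in norm and hence forcing $y-(I-T)w=0$, so that $y\in\range(I-T)$.
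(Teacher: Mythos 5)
Your argument is correct, and it is worth noting that the paper itself contains no proof of this statement: it is quoted as a classical theorem of Dunford and Lin, with only a historical paragraph recording which implications are due to whom and the observation that (6) is the special case $\ker(I-T)=\{0\}$. What you have done, in effect, is reconstruct the Dunford--Lin proofs in a self-contained way. Your step $(4)\Rightarrow(1)$ is precisely Lin's argument: the open mapping theorem applied to the induced isomorphism $X/\ker(I-T)\to\range(I-T)$ gives $\bigl\|T_{[n]}|_{\range(I-T)}\bigr\|\le c\,(\|T\|+\|T^{n+1}\|)/n\to 0$, after which the Neumann series and the factorisation $I-T_{[n]}=(I-T)U_n$ make $(I-T)|_{\range(I-T)}$ bijective and produce the bounded projection; your $(1)\Rightarrow(5)$ and $(2)\Rightarrow(4)$ steps (promotion of $\ker(I-T)^2=\ker(I-T)$ via $T^nx=x-n(I-T)x$, then correction of the approximating sequence through the open mapping theorem for $(I-T)^2$ onto its closed range) are Dunford-style and sound, and the sketched "delicate point" does close: if $z=\lim(I-T)u_n\in\ker(I-T)$, then $(I-T)^2u_n\to 0$, one replaces $u_n$ by $v_n\to 0$ in the same coset modulo $\ker(I-T)^2=\ker(I-T)$, and $(I-T)u_n=(I-T)v_n\to 0$ forces $z=0$. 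The only places you argue at citation level are $(3)\Leftrightarrow(5)$, delegated to the classical Riesz--Dunford description of a simple pole (spectral projection onto $\ker(I-T)$ along the closed range), which is exactly the granularity at which the paper itself treats the whole theorem, and two easily filled compressions: in $(4)\Rightarrow(1)$ one should say explicitly that surjectivity of $(I-T)|_{\range(I-T)}$ yields $\ker(I-T)+\range(I-T)=X$, and for (6) the converse direction ($1\notin\sigma(T)$ gives $\range(I-T)=X$ closed, hence (4)) should be recorded, though it is immediate. The trade-off is the expected one: the paper's citation is economical, while your route would make the statement self-contained at the cost of about a page.
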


The equivalence of the first 3 conditions and that they imply (4) and (5) were proved by Dunford  \cite[Theorem 8]{Dunford43}. Dunford also proved that (4) implies (1)  if  $T$ is assumed to be  mean ergodic.
 Lin proved in \cite{lin74} that (4) implies (1) with no extra assumptions, and then also (5), which certainly implies (4). Condition (6) is simply a particular case of the theorem which is specially relevant in our work. The decomposition in direct sum of  (5) when there are not fixed points is equivalent to  $I-T$ being an injective operator and $\range(I-T)=X$, i.e. to $I-T$ being an isomorphism.

We finish this subsection stating a classic result of Lotz, in the form that we need it

\begin{theorem}[Theorem 5 of \cite{lotz85}]
\label{Lotz85}
Let $(\Omega,\Sigma,\mu)$ be a positive measure space. Then every bounded linear operator $T$ on $L^\infty(\Omega)$  satisfying $\lim_n\norm{T^n}/n=0$ which is mean ergodic is also uniformly mean ergodic.

\end{theorem}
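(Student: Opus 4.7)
The plan is to deduce uniform mean ergodicity from the Dunford--Lin characterization, Theorem \ref{lin74t}. Since the hypothesis $\lim_n\|T^n\|/n=0$ is already in force, by part (4) of that theorem it suffices to prove that $\range(I-T)$ is closed in $L^\infty(\Omega)$.

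My first step uses mean ergodicity to reduce the problem. By Theorem \ref{basic:me}(1) we have the decomposition $L^\infty(\Omega)=\Fix{T}\oplus Y$ where $Y:=\overline{\range(I-T)}$, and both summands are $T$-invariant. The restriction $S:=T|_Y$ inherits mean ergodicity (with strong limit $0$) and the condition $\|S^n\|/n\to 0$, and moreover $I-S$ is injective on $Y$ because any fixed point of $S$ in $Y$ would lie in $\Fix T\cap Y=\{0\}$. Under this reduction, closedness of $\range(I-T)$ becomes surjectivity of $I-S$ onto $Y$, which by the open mapping theorem is equivalent to $I-S$ being bounded below on $Y$.

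The second step, where essentially all the work lies, is to rule out a sequence $(y_n)\subset Y$ with $\|y_n\|=1$ and $(I-S)y_n\to 0$. This is where the distinctive Banach-space structure of $L^\infty(\Omega)$ must be exploited: it is a Grothendieck space, so every weak-$*$ convergent sequence in $L^\infty(\Omega)^*$ is already weakly convergent, and it has the Dunford--Pettis property, so weakly null sequences in $L^\infty(\Omega)$ are mapped to norm null sequences by any weakly compact operator. Passing to the adjoint, mean ergodicity of $T$ gives weak-$*$ convergence of $(T^*)_{[n]}$ pointwise on $L^\infty(\Omega)^*$; the Grothendieck property promotes this to weak convergence, and a Dunford--Pettis argument applied to the weakly compact operators $T_{[n]}-T_{[m]}$, evaluated on the putative bad sequence $(y_n)$, would then force $(y_n)$ to converge in norm modulo $\Fix S=\{0\}$, contradicting $\|y_n\|=1$.

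The main obstacle is precisely this last step, which is the heart of Lotz's original argument: converting pointwise strong convergence of $T_{[n]}$ into uniform control on the unit ball requires the Grothendieck and Dunford--Pettis properties to cooperate in a nontrivial way, upgrading weak compactness in the adjoint into norm estimates on iterates of $T$. The theorem is known to be sharp in this respect, since $L^1$-type spaces, which lack the Grothendieck property, admit mean ergodic contractions that fail to be uniformly mean ergodic. Once $(y_n)$ is excluded, $I-S$ is bounded below and hence an isomorphism of $Y$; therefore $\range(I-T)=Y$ is closed, and Theorem \ref{lin74t}(4) concludes that $T$ is uniformly mean ergodic.
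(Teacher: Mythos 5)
The paper does not prove this statement at all: it is imported verbatim as Theorem 5 of Lotz's paper \cite{lotz85}, so there is no internal argument to compare with, and your proposal has to stand on its own. Its framing is sound as far as it goes: reducing to closedness of $\range(I-T)$ via Theorem \ref{lin74t}(4), splitting $L^\infty(\Omega)=\Fix{T}\oplus Y$ with $Y=\overline{\range(I-T)}$ by Theorem \ref{basic:me}, and noting that it suffices to bound $I-S$ below on $Y$ (with $S=T|_Y$ injective minus identity and dense range) are all correct reductions, and you correctly identify the Grothendieck and Dunford--Pettis properties of $L^\infty(\Omega)$ as the structural input Lotz actually uses.

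However, the decisive step is missing, and the gesture you make toward it does not work as stated. You claim the contradiction with $\|y_n\|=1$ will come from ``a Dunford--Pettis argument applied to the weakly compact operators $T_{[n]}-T_{[m]}$,'' but there is no reason these differences of Ces\`aro means are weakly compact: for instance, if $T$ is composition with an invertible measure-preserving transformation, $T_{[n]}-T_{[m]}$ is a difference of averages of isometries and in general fixes a copy of $c_0$, hence is not weakly compact; nothing in the hypotheses supplies weak compactness. Moreover, even granting some weak compactness, no mechanism is given that turns the approximate-eigenvector sequence $(y_n)$, which is not known to be weakly null, into a pairing of a weakly null sequence in $L^\infty(\Omega)$ with a weakly null sequence of functionals, which is what the Dunford--Pettis property actually consumes. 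The genuine content of Lotz's theorem is exactly this upgrade from strong to uniform convergence: one proves a lemma of the type ``if $(A_n)$, $(B_n)$ are bounded, $B_nx\to 0$ in norm for each $x$ and $A_n^*x^*\to 0$ weak$^*$ (hence weakly, by the Grothendieck property) for each $x^*$, then $\|A_nB_n\|\to 0$ by Dunford--Pettis,'' applies it to get $\|S_{[n]}^2\|\to 0$ (or $\|S_{[n]}\|<1$ eventually) on $Y$, and then uses an algebraic identity such as $I-S_{[n]}=(I-S)Q_n$ with $Q_n$ bounded and commuting with $S$ to conclude that $I-S$ is surjective. Your proposal acknowledges this is ``the main obstacle'' but does not supply any of it, so as written the proof is incomplete at its core, and the one concrete device offered (weak compactness of $T_{[n]}-T_{[m]}$) is unjustified.
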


\subsection{Convolution operators}
Our convolution operators will be defined on Banach  spaces of functions on a locally compact group. For any such $G$, we will the denote by $\mg$ its Haar measure, i.e., its (essentially unique)  left invariant   measure. The  Banach space of  (equivalence classes of) Haar $p$-integrable functions will be simply  denoted as  $L^p(G)$.

The measure $\mg$ needs not be right invariant. Its behaviour   under  right transaltions  is gauged by  the modular function $\Delta_{\G }$. $\Delta_{\G}$ is a homomorphism of $G$ into the multiplicative group of positive real numbers such that
    \[\int f(xy^{-1})\dmg(x)=\Delta_{\G}(y)\int f(x)\dmg(x).\]

The    Banach space of bounded regular measures on $G$ will be denoted by $M(G)$. Hence   $M(G)=C_0(G)^\ast$. The weak* topology $\sigma(M(G),C_0(G))$ will  be referred to as the \emph{vague topology}.

We will always regard $L^1(G)$ as a subalgebra (an ideal, actually) of $M(G)$ through the embedding $f\mapsto f\cdot \mg$.
\begin{definitions}
\label{defs:co}
Let $\mu, \mu_1, \mu_2 \in M(G)$ be  bounded regular measures. We consider:
\begin{enumerate}
\item The \emph{convolution of measures}:
\[\langle \mu_1\ast\mu_2, f\rangle=\int\int f(xy)\dmu_1(x)\dmu_2(y), \mbox{ for every $f\in C_{00}(G)$}.\]
\item  The left \emph{convolution operator $1\leq p\leq \infty$}:
\[\lambda_p(\mu)\colon L^p(G)\to L^p(G) \]
  given by\[\lambda_p(\mu)(f)(s) =(\mu\ast f)(s):=\int f(x^{-1}s)\dmu(x), \quad \mbox{  $f\in L^p(G)$,  $s\in G$}.\]

\item The right \emph{convolution operator for $1\leq p\leq \infty$}:
\[\rho_p(\mu)\colon L^p(G)\to L^p(G) \]
  given by\[\rho_p(\mu)(f)(s)  =\int \Delta_{\G}(x)f(sx)\dmu(x), \quad \mbox{$f\in L^p(G)$,  $s\in G$}.\]
\end{enumerate}
\end{definitions}
The ergodic behaviour of the operators  $ \lambda_p(\mu) $ and  $ \rho_p(\mu) $ is the same when $1<p<\infty$. This is due to the following easily verifiable fact.
\begin{fact}
\label{intert}
 Let $1<p<\infty$ The operators  $\rho_p (\mu) $ and $\lambda_p (\mu)$ are intertwined by the linear isometry $U_p\colon L^p(G)\to L^p(G)$, given by $U_p(f)(s)=\Delta_{\G}^{1/p}(s^{-1})f(s^{-1})$, i.e.
  $\lambda_p(\mu)  U_p=U_p\rho_p(\mu)$.\end{fact}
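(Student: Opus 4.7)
The plan is to prove the fact by separating the two independent assertions: that $U_p$ is a linear isometry of $L^p(G)$ onto itself, and that the intertwining identity $\lambda_p(\mu)U_p = U_p\rho_p(\mu)$ holds.

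To verify that $U_p$ is an isometry, I would invoke the classical inversion formula for the Haar measure, $\int g(s^{-1})\,dm_G(s) = \int g(s)\,\Delta_G(s^{-1})\,dm_G(s)$, which is a direct consequence of the defining relation of $\Delta_G$ recalled at the start of the Preliminaries. Applied to the function $g(s) = \Delta_G(s)\,|f(s)|^p$ it gives $\|U_p f\|_p^p = \int \Delta_G(s^{-1})\,|f(s^{-1})|^p\,dm_G(s) = \|f\|_p^p$ in one line. The identity $U_p\circ U_p = I$ (which makes $U_p$ surjective and hence a bijective isometry) follows from the multiplicativity $\Delta_G(s)\Delta_G(s^{-1})=1$, since $(U_p^2 f)(s) = \Delta_G(s^{-1})^{1/p}\Delta_G(s)^{1/p} f(s) = f(s)$.

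For the intertwining, I would compute both sides of $\lambda_p(\mu)U_p f = U_p\rho_p(\mu)f$ pointwise, first for $f\in C_{00}(G)$. Inserting the definition of $U_p$ and the algebraic identity $(x^{-1}s)^{-1} = s^{-1}x$ yields
\[
(\lambda_p(\mu)U_p f)(s) = \int \Delta_G(s^{-1}x)^{1/p}\,f(s^{-1}x)\,d\mu(x),
\]
and the homomorphism property $\Delta_G(s^{-1}x) = \Delta_G(s^{-1})\Delta_G(x)$ allows the factor $\Delta_G(s^{-1})^{1/p}$ to be pulled out of the integral. On the other hand, $(U_p\rho_p(\mu)f)(s) = \Delta_G(s^{-1})^{1/p}\,(\rho_p(\mu)f)(s^{-1})$, and unfolding the definition of $\rho_p(\mu)$ produces the same integrand. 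Fubini's theorem guarantees the computation is legitimate, and a standard density argument (using that $U_p$ and both convolution operators are bounded on $L^p(G)$) then extends the equality from $C_{00}(G)$ to all of $L^p(G)$.

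There is no conceptual obstacle: the whole difficulty is a careful bookkeeping of the exponents of $\Delta_G$ at each occurrence, which is precisely why the statement is advertised as ``easily verifiable'' and recorded as a fact.
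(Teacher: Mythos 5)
Your route is the natural one, and it is all the paper expects: the Fact is recorded there as ``easily verifiable'' with no proof given. The isometry half of your argument is complete and correct: applying the inversion formula $\int g(s^{-1})\,dm_G(s)=\int g(s)\,\Delta_G(s^{-1})\,dm_G(s)$ to $g=\Delta_G\,|f|^p$ gives $\|U_pf\|_p=\|f\|_p$, and $U_p\circ U_p=I$ settles surjectivity.

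The one step you leave implicit, however, is exactly where the advertised ``bookkeeping of exponents'' lives, and as read against the printed text it does not go through. Your left-hand side is $(\lambda_p(\mu)U_pf)(s)=\Delta_G(s^{-1})^{1/p}\int\Delta_G(x)^{1/p}f(s^{-1}x)\,d\mu(x)$, whereas unfolding $\rho_p(\mu)$ exactly as printed in Definitions~\ref{defs:co} gives $(U_p\rho_p(\mu)f)(s)=\Delta_G(s^{-1})^{1/p}\int\Delta_G(x)\,f(s^{-1}x)\,d\mu(x)$; the integrands differ by the factor $\Delta_G(x)^{1-1/p}$, so they coincide only when $\Delta_G\equiv 1$ on $\supp\mu$ (or $p=1$). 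The intertwining you assert is true for the definition $\rho_p(\mu)f(s)=\int\Delta_G(x)^{1/p}f(sx)\,d\mu(x)$, which is the one the paper actually uses later (see the computation in Lemma~\ref{pseduom}(5), where $\rho^{\G}_p(\mu)f(xh)=\int\Delta_{\G}(u)^{1/p}f(xhu)\,d\mu(u)$); the exponent $1/p$ has evidently been dropped in Definitions~\ref{defs:co}(3). So you should either point out this misprint and verify the identity with the corrected definition, writing out the two integrands, or your claim that ``unfolding the definition of $\rho_p(\mu)$ produces the same integrand'' is simply false for the definition as stated. Two minor simplifications: Fubini plays no role (each side is a single integral against $\mu$), and no density argument is needed, since the pointwise a.e.\ computation is valid for every $f\in L^p(G)$ once Theorem~\ref{young} guarantees the relevant convolutions are defined a.e.
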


  Proofs for items (1) and (2) of the following theorem can be found in Theorem 20.12 of \cite{hewiross1}, for  a proof of item (3), see  e.g., page 47 of \cite{green69}.
\begin{theorem}\label{young}
Let $G$ be a locally compact group and let $\mu \in M(G)$. Then:
\begin{enumerate}
\item
     $\mu \ast f\in L^p(G)$ for every $f\in L^p(G)$ and ${\displaystyle \|\mu \ast f\|^p\leq \|\mu\| \cdot \|f\|^p}$. As a consequence,
      \item $\lambda_p(\mu)\in \ll(L^p(G))$ and $\|\lambda_p(\mu)\|\leq \|\mu\|$.
\item $\norm{\lambda_1(\mu)}=\norm{\mu}$.
\end{enumerate}
\end{theorem}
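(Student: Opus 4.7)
The plan is to prove the three items in order, with (2) being an immediate consequence of (1) once one reads the inequality as $\norm{\mu\ast f}_p\leq \norm{\mu}\norm{f}_p$; the $L^p$-norm of $\mu\ast f$ is bounded by $\norm{\mu}$ times the $L^p$-norm of $f$.

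For (1), I would view $\mu\ast f$ as a vector-valued integral of left-translates. Writing $L_x f(s)=f(x^{-1}s)$, one has the identity $\mu\ast f=\int L_x f\dmu(x)$ (in the Bochner/weak sense, depending on $p$). Since $\mg$ is left invariant, each $L_x$ is an isometry of $L^p(G)$, so $\norm{L_x f}_p=\norm{f}_p$. Minkowski's integral inequality then gives
\[
\bnorm{\mu\ast f}_p=\bnorm{\int L_x f\, d\mu(x)}_p\leq \int \norm{L_x f}_p\,d|\mu|(x)=\norm{\mu}\,\norm{f}_p.
\]
The main technical point to dispatch cleanly is the measurability of $(x,s)\mapsto f(x^{-1}s)$ on $G\times G$ with respect to $|\mu|\otimes\mg$; I would first verify this for $f\in C_{00}(G)$ (where joint continuity of the group operation makes it immediate), deduce the inequality for this dense class via Fubini, and then extend by a standard density argument. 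The two endpoint cases serve as sanity checks: for $p=\infty$ the inequality is the trivial $\esssup|\mu\ast f|\leq\norm{\mu}\norm{f}_\infty$, and for $p=1$ it follows directly from Fubini and left-invariance, $\int|\mu\ast f(s)|\dmg(s)\leq\int\int|f(x^{-1}s)|\dmg(s)\,d|\mu|(x)=\norm{\mu}\norm{f}_1$. The hard part, if one wanted to avoid the vector-valued-integral framework, is handling $1<p<\infty$: one can instead pair $\mu\ast f$ with a generic $g\in L^{p'}(G)$, apply Fubini and Hölder's inequality, and take the supremum over $\norm{g}_{p'}\leq 1$.

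For (3), the inequality $\norm{\lambda_1(\mu)}\leq\norm{\mu}$ is already in (2); what remains is the reverse inequality $\norm{\mu}\leq\norm{\lambda_1(\mu)}$. I would test $\lambda_1(\mu)$ against a bounded approximate identity $(e_\alpha)$ in $L^1(G)$ consisting of nonnegative functions of unit $L^1$-norm supported in shrinking neighbourhoods of the identity. A direct change-of-variable argument, using that $e_\alpha$ is concentrated near the identity, shows that $\mu\ast e_\alpha\to\mu$ in the vague topology of $M(G)$. Since the total variation norm is lower semicontinuous for the vague topology (being the supremum of pairings with $C_{00}$-functions of modulus at most one), and since $\norm{\mu\ast e_\alpha}_1\leq\norm{\lambda_1(\mu)}\norm{e_\alpha}_1=\norm{\lambda_1(\mu)}$, we conclude
\[
\norm{\mu}\leq\liminf_\alpha\norm{\mu\ast e_\alpha}_1\leq\norm{\lambda_1(\mu)}.
\]
The only delicate step here is the vague convergence $\mu\ast e_\alpha\to\mu$; this reduces, for a test function $\varphi\in C_0(G)$, to the uniform continuity of $\varphi$ on compact sets together with a dominated convergence argument against $|\mu|$.
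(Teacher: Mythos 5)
Your proposal is correct. The paper gives no proof of its own for this theorem, simply citing Theorem 20.12 of \cite{hewiross1} for items (1)--(2) and page 47 of \cite{green69} for item (3); your arguments --- Minkowski's integral inequality (or the dual pairing with $g\in L^{p'}(G)$ via Fubini and H\"older) for (1)--(2), and for (3) testing $\lambda_1(\mu)$ on a bounded approximate identity, using vague convergence $\mu\ast e_\alpha\to\mu$ and lower semicontinuity of the total variation norm --- are exactly the standard proofs found in those references, with the measurability point for general $f\in L^p(G)$ correctly identified and disposed of by density of $C_{00}(G)$.
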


$M(G)$  is a $\ast$-Banach algebra under convolution with involution defined by $\<{\mu^\ast,h}=\overline{\<{\mu,\tilde{h}}}$ for every $h\in C_{0}(G)$, where $\tilde{h}(s)=\overline{h(s^{-1})}$, for every $s\in G$.
    If $\mu=\mu^\ast$, the measure is called \emph{Hermitian}. Since $\lambda_2(\mu)^\ast=\lambda_2(\mu^\ast)$, the operator $\lambda_2(\mu)$ is self-adjoint precisely when $\mu$ is Hermitian. We will say that the measure $\mu$ is \emph{operator-normal} when  $\mu\ast\mu^\ast=\mu^\ast\ast \mu$.
The operator $\lambda_2(\mu)$ will be normal precisely when $\mu$ is operator-normal, in particular when $G$ is commutative, \cite[Theorem 20.23]{hewiross1}.

%

We will often deal with $\langle L^p(G),L^{p^\prime}(G)\rangle$ dualities. For $p\neq 2$ and 1/p+1/$p^\prime=1$ , this duality is given by:
\[ \langle f, g\rangle =\int f(x)g(x)\, \dmg(x)\quad \mbox{  $f\in L^p(G)$,  $g\in L^{p^\prime}(G)$}.\]
When $p=2$, the duality is derived  from the inner product in $L^2(G)$ and is therefore given by
\[ \langle f, g\rangle =\int f(x)\overline{g(x)}\, \dmg(x)
\quad \mbox{} f,\, g\in L^2(G).\]
 \subsection{Abelian groups: Fourier-Stieltjes transforms}\label{sec:2.3}
If $G$ is Abelian, the Fourier-Stieltjes transform establishes a unitary equivalence between convolution operators and multiplication operators. We use this subsection to recall this fact and some of its consequences.

Let $G$ be a locally compact Abelian and let $\T$ denote   the multiplicative group of complex numbers of modulus 1. By a \emph{character} of $G$ we understand a continuous homomorphism of $G$ into $\T$. The set $\widehat{G}$ of all continuous characters of $G$  with the topology of uniform convergence on compact sets  acquires the   structure of a locally compact Abelian group under pointwise multiplication.

 If $\mu\in M(G)$, the function $\widehat{\mu}\colon \widehat{G}\to \C$ given by $ \widehat{\mu} (\chi)=\int \chi(t)\,d\mu(t)$ is  a bounded uniformly continuous function known as  the \emph{Fourier-Stieltjes transform } of $\mu$.
The Riemann-Lebesgue theorem shows that, $\widehat{f}\in C_0(\widehat{G})$   for every $f\in L^1(G)$ (recall that we  simply write $f$ for the measure $f\cdot \dmg$).
The Fourier-Stieltjes transform  restricted to $L^1(G)$ is usually  known as the \emph{Fourier transform}.

  The symbol $M_{\widehat{\mu}}$ will denote the multiplication operator $M_{\widehat{\mu}}(f)\colon L^2(\widehat{G})\to L^2(\widehat{G})$  given by $M_{\widehat{\mu}}(f)(\chi)= \widehat{\mu}(\chi)\cdot f(\chi)$.
\begin{theorem}\label{FSUE}
 Let $G$ be a locally compact Abelian group and let  $\mu\in M(G)$.
 \begin{enumerate}
 \item The convolution operator $\lambda_2(\mu)\in \ll(L^2(G))$  is unitarily equivalent to the multiplication operator $M_{\widehat{\mu}}\in  \ll(L^2(\widehat{G}))$. Hence,  $\|\lambda_2(\mu)\|=\|\widehat{\mu}\|_\infty$.
    \item Let $K_\mu:=\{\widehat{f}\colon f\in \ker(I-\lambda_2(\mu))\}$ and let $\widehat{P_\lambda(\mu)}\in \ll(L^2(\widehat{G}))$ be  the projection operator onto  $K_\mu$. Then $\widehat{P_\lambda(\mu)}=M_{\Cf_{A_\mu}}$, where $A_\mu=\widehat{\mu}^{-1}(\{1\})$.
 \end{enumerate}
\end{theorem}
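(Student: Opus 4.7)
The plan is to derive both statements from the fact that the Fourier--Plancherel transform $\F\colon L^2(G)\to L^2(\widehat{G})$ is a unitary isomorphism that conjugates convolution into pointwise multiplication. With a suitably normalized Haar measure on $\widehat{G}$, this is the content of the classical Plancherel theorem.

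For item (1), I would first verify the intertwining identity $\widehat{\mu\ast f} = \widehat{\mu}\cdot\widehat{f}$ on the dense subspace $L^1(G)\cap L^2(G)$. A direct Fubini computation, combined with left invariance of $\mg$ and multiplicativity of the character, yields this. Both $\F\circ\lambda_2(\mu)$ (which is bounded by Theorem \ref{young}) and $M_{\widehat{\mu}}\circ\F$ (bounded because $\widehat{\mu}$ is a bounded function) are continuous operators $L^2(G)\to L^2(\widehat{G})$, so the identity extends to all of $L^2(G)$. This is exactly the desired unitary equivalence. The norm identity then follows because the operator norm of a multiplication operator on $L^2$ equals the essential supremum of its symbol; as $\widehat{\mu}$ is continuous on $\widehat{G}$ (whose Haar measure has full support), this essential supremum coincides with $\|\widehat{\mu}\|_\infty$.

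For item (2), the unitary equivalence sends $\Fix{\lambda_2(\mu)}$ onto $\Fix{M_{\widehat{\mu}}}$, so by definition $K_\mu = \Fix{M_{\widehat{\mu}}}$. A function $g\in L^2(\widehat{G})$ is fixed by $M_{\widehat{\mu}}$ precisely when $(\widehat{\mu}(\chi)-1)g(\chi)=0$ a.e., which happens iff $g$ vanishes a.e.\ off $A_\mu$. Hence $K_\mu = \Cf_{A_\mu}\cdot L^2(\widehat{G})$ is a closed subspace of $L^2(\widehat{G})$, and its orthogonal projection is visibly $M_{\Cf_{A_\mu}}$ (note that $A_\mu$ is closed, hence Borel, by continuity of $\widehat{\mu}$, so $\Cf_{A_\mu}$ is measurable). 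Since $\widehat{P_\lambda(\mu)}$ is by definition the orthogonal projection onto $K_\mu$, this gives $\widehat{P_\lambda(\mu)}=M_{\Cf_{A_\mu}}$.

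The main obstacle is essentially cosmetic: ensuring the intertwining identity extends from $L^1(G)\cap L^2(G)$ to all of $L^2(G)$ (the $L^2$-Fourier transform is not given by the integral formula outside this dense subspace). This is a routine density-and-continuity argument. The remaining ingredients are standard facts about multiplication operators on $L^2$-spaces and orthogonal projections onto subspaces of functions vanishing off a measurable set.
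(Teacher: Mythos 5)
Your argument is correct and is precisely the classical Plancherel-transform proof: verify $\widehat{\mu\ast f}=\widehat{\mu}\,\widehat{f}$ on the dense subspace $L^1(G)\cap L^2(G)$, extend by continuity to get $\F\lambda_2(\mu)\F^{-1}=M_{\widehat{\mu}}$, use continuity of $\widehat{\mu}$ and full support of Haar measure on $\widehat{G}$ for the norm identity, and identify the fixed space of $M_{\widehat{\mu}}$ with $\Cf_{A_\mu}\cdot L^2(\widehat{G})$ so that the orthogonal projection is $M_{\Cf_{A_\mu}}$. The paper states this theorem without proof as a recalled classical fact, and your write-up supplies exactly the intended standard argument, so there is nothing to compare beyond that.
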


\subsection{Amenable groups. Spectrum of convolution operators}
 Ame\-na\-bility is a far-reaching  property  of topological groups  that is characterized by  the existence of invariant means on their space of uniformly continuous  bounded functions. All compact groups and all locally compact Abelian groups are amenable. Free groups and locally compact groups containing them, such as semisimple Lie groups, are among the most prominent nonamenable groups.   We will refrain   from defining what an amenable group is. For our purposes, we only need to recall the following  properties.
\begin{theorem}\label{cornormamen}
  Let $G$ be  a locally compact group, let  $\mu$ be a positive measure on $G$ and let $1\leq p<\infty$. Then:
  \begin{enumerate}
     \item (Theorem 3.2.2 of \cite{green69}, for instance)  If $G$ is amenable, then $\norm{\mu} \in \sigma(\lambda_p(\mu))$.
     \item (\cite[Th\'eor\`eme 5]{bergchrist74}) $H_\mu$ is amenable if and only if $r(\lambda_p(\mu))=\norm{\mu}$.
   \end{enumerate}
\end{theorem}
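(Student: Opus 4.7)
The plan is to treat both parts via the interplay between Reiter's condition for amenable groups and the spectral data of convolution operators; the reverse implication in (2) will be the main obstacle, and is the content of the cited Berg--Christensen theorem.

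For (1), I would invoke Reiter's condition $P_p$, which characterizes amenability: for every compact $K\subseteq G$ and every $\varepsilon>0$ there is a nonnegative unit vector $f\in L^p(G)$ with $\norm{\lambda_p(x)f-f}_p<\varepsilon$ for all $x\in K$. Since $\mu$ is a bounded regular measure, it may first be approximated in total variation by a compactly supported positive measure $\mu_K=\Cf_K\cdot\mu$, which only perturbs both the norm and the spectrum by a small amount. Applying Reiter's condition with $K=\supp\mu_K$ and using Minkowski's inequality for integrals gives
\[
\norm{\lambda_p(\mu_K)f-\norm{\mu_K}f}_p \le \int \norm{\lambda_p(x)f-f}_p\, d\mu_K(x) < \varepsilon\,\norm{\mu_K},
\]
where $\mu_K(G)=\norm{\mu_K}$ because $\mu_K\geq 0$. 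Hence $\norm{\mu_K}$ is an approximate eigenvalue of $\lambda_p(\mu_K)$, and a routine perturbation argument transfers the conclusion to $\mu$ itself.

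For the ``only if'' direction of (2), the bound $r(\lambda_p(\mu))\le\norm{\mu}$ holds unconditionally by Theorem \ref{young}, so only the reverse inequality needs proof. Regard $\mu$ as a measure on $H_\mu$ of the same total variation; amenability of $H_\mu$ combined with part (1) applied inside the group $H_\mu$ yields $\norm{\mu}\in\sigma(\lambda_p(\mu))$, where $\lambda_p(\mu)$ now acts on $L^p(H_\mu)$. The Appendix announced in the introduction supplies the equality of spectral radii of $\lambda_p(\mu)$ viewed on $L^p(G)$ and on $L^p(H_\mu)$, and the conclusion follows.

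The hard direction of (2) is the converse: non-amenability of $H_\mu$ must force the strict inequality $r(\lambda_p(\mu))<\norm{\mu}$. This is precisely Berg--Christensen's locally compact version of Kesten's theorem and is the genuine obstacle. The standard route is to pass to the symmetrized positive measure $\mu^\ast\ast\mu$, whose $L^2$ operator norm equals $\norm{\lambda_2(\mu)}^2$; the strict inequality $\norm{\lambda_2(\mu^\ast\ast\mu)}<\norm{\mu^\ast\ast\mu}$ then encodes the absence of approximately invariant unit vectors in $L^2(H_\mu)$, which is the genuinely non-commutative harmonic analytic ingredient. The extension from $p=2$ to arbitrary $p$ is then handled through positivity and Riesz--Thorin interpolation between $p=1$ and $p=\infty$. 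Establishing the quantitative spectral gap at $p=2$ is nontrivial, and I would invoke \cite{bergchrist74} rather than reprove it.
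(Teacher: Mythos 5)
Your proposal is sound and in essence coincides with what the paper does: the paper offers no proof of this statement, quoting Greenleaf's Theorem 3.2.2 for (1) and Berg--Christensen for (2), and your Reiter-condition argument for (1) (with the approximate-eigenvalue transfer under the small perturbation $\mu\mapsto\Cf_K\cdot\mu$, which is the correct way to phrase the ``perturbation'' step, since the spectrum itself need not vary continuously) together with the reduction to $H_\mu$ via the Appendix for the easy direction of (2) is exactly the standard route behind those citations. For the hard converse of (2) you invoke \cite{bergchrist74}, precisely as the paper does, so nothing further is required.
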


Proposition \ref{lemma6.6}  below is well known when $G$ is Abelian. It is also well-known   that $\sigma(\lambda_p(\mu))\subseteq \sigma(\lambda_1(\mu)) =\sigma(\mu)$ for every locally compact group, not necessarily amenable, and every $\mu\in M(G)$. The inclusion $\sigma(\lambda_2(\mu))\subseteq \sigma(\lambda_p(\mu))$ for amenable groups is not,  as far as we know,  explicitly stated in the literature.
 \begin{proposition} \label{lemma6.6}
 Let $G$ be an  amenable  locally compact and let $\mu\in M(G)$. Then, for any $1\leq p \leq q\leq 2$ or $2\leq q\leq p$,   $\sigma(\lambda_q(\mu))\subseteq \sigma(\lambda_p(\mu))$.
 \end{proposition}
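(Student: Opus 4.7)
The plan is to combine the Dunford holomorphic calculus with Riesz--Thorin interpolation between $L^p(G)$ and $L^{p'}(G)$. Note first that the hypothesis ``$1 \leq p \leq q \leq 2$ or $2 \leq q \leq p$'' is equivalent to $q$ lying between $p$ and its conjugate exponent $p'$, which is the natural window for interpolation.

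I would start by disposing of the case $|z| > \|\mu\|$. For such $z$, the Neumann series $\nu_z := \sum_{n \geq 0} z^{-(n+1)} \mu^{*n}$ converges absolutely in $M(G)$ and satisfies $(z\delta_e - \mu)*\nu_z = \nu_z * (z\delta_e - \mu) = \delta_e$. Applying $\lambda_r$ gives $\lambda_r(\nu_z) = (zI - \lambda_r(\mu))^{-1}$ \emph{simultaneously} on every $L^r(G)$ for $r \in [1,\infty]$. So $\{z : |z| > \|\mu\|\} \subseteq \rho(\lambda_q(\mu))$ with no amenability or ordering assumption on $p,q$.

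Next, for $z$ in the unbounded component of $\rho(\lambda_p(\mu))$, I would represent $R(z, \lambda_p(\mu))$ via the Dunford calculus. Since Theorem \ref{young} gives $\sigma(\lambda_p(\mu)) \subseteq \{w : |w| \leq \|\mu\|\}$, one can pick a positively oriented contour $\Gamma$ in $\{w : |w| > \|\mu\|\}$ that encircles $\sigma(\lambda_p(\mu))$ with $z$ outside $\Gamma$. On $\Gamma$, $(wI - \lambda_p(\mu))^{-1} = \lambda_p(\nu_w)$, so
\[
(zI - \lambda_p(\mu))^{-1} = \frac{1}{2\pi i}\oint_\Gamma \frac{\lambda_p(\nu_w)}{w-z}\,dw.
\]
The integrand is continuous and uniformly bounded on $\Gamma$ when regarded as taking values in $\mathcal{L}(L^q(G))$, by Theorem \ref{young}; the same integral thus defines a bounded operator on $L^q(G)$, which the Dunford calculus for $\lambda_q(\mu)$ identifies with $(zI - \lambda_q(\mu))^{-1}$. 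Hence $z \in \rho(\lambda_q(\mu))$ for every such $z$.

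The main obstacle is $z$ lying in a bounded component of $\rho(\lambda_p(\mu))$: any contour encircling $\sigma(\lambda_p(\mu))$ then also encircles $z$, and the Dunford representation requires an inner contour $\Gamma'$ inside that bounded component, on which $(wI - \lambda_p(\mu))^{-1}$ is only a priori known to be a bounded operator on $L^p(G)$ (not given by convolution with a measure, nor directly acting on $L^q(G)$). Amenability must enter here: the cleanest route is through Herz's embedding of the algebra of left convolutors, $CV_p(G) \hookrightarrow CV_q(G)$, valid for amenable $G$ and $1 \leq p \leq q \leq 2$, which guarantees that every left multiplier on $L^p(G)$---including the resolvent $(wI - \lambda_p(\mu))^{-1}$, which commutes with right translations---restricts to a bounded operator on $L^q(G)$. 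The range $2 \leq q \leq p$ then follows by the dual embedding $CV_p(G) \hookrightarrow CV_q(G)$ for $2 \leq q \leq p$, obtained by applying the involution $\mu \mapsto \mu^*$ on $M(G)$.
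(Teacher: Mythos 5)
Your final paragraph does land on the right tool, and it is the same one the paper uses: Herz's restriction theorem for convolvers (Theorem C of \cite{herz71}, proved in \cite{herzrivi72}; see also \cite{deri11}), which for amenable $G$ and $q$ between $p$ and $2$ says that every operator on $L^p(G)$ commuting with right translations is bounded on $L^q(G)$. But the contour machinery built around it does not work as stated. In step 2, a contour $\Gamma$ confined to $\{w:|w|>\|\mu\|\}$ that winds once around $\sigma(\lambda_p(\mu))$ necessarily winds once around \emph{every} point of the closed disc of radius $\|\mu\|$ (the winding number is constant on that connected set), so requiring $z$ to lie outside $\Gamma$ forces $|z|>\|\mu\|$; thus this step covers nothing beyond your Neumann-series step 1, and in particular does not reach points of the unbounded component of $\rho(\lambda_p(\mu))$ with $|z|\leq\|\mu\|$ (such points exist, e.g.\ for measures with $r(\lambda_p(\mu))<1<\|\mu\|$ as in Example \ref{7.3}). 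For the bounded components the argument is also incomplete: knowing via Herz that $(wI-\lambda_p(\mu))^{-1}$ acts boundedly on $L^q(G)$ for $w$ on an inner contour does not identify it with $(wI-\lambda_q(\mu))^{-1}$, and without that identification the cycle integral cannot be recognized as the resolvent of $\lambda_q(\mu)$ at $z$ by the Dunford calculus for $\lambda_q(\mu)$ --- the missing identification is essentially the proposition itself. A small but symptomatic slip: the hypothesis is that $q$ lies between $p$ and $2$, not between $p$ and $p'$, and Riesz--Thorin never actually enters your argument.

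The repair is to discard the contours and apply the Herz embedding directly at the point $z$, which is exactly the paper's (one-paragraph) proof: if $z\notin\sigma(\lambda_p(\mu))$, the inverse $S=(zI-\lambda_p(\mu))^{-1}\in\mathcal{L}(L^p(G))$ commutes with right translations because $zI-\lambda_p(\mu)$ does, so $S$ is a $p$-convolver; by Herz's theorem it defines a bounded operator on $L^q(G)$, and since the embedding of convolvers is a (multiplicative, identity-preserving) algebra homomorphism carrying $\lambda_p(\mu)$ to $\lambda_q(\mu)$ --- they agree on the dense subspace $C_{00}(G)$ --- the identities $S(zI-\lambda_p(\mu))=(zI-\lambda_p(\mu))S=I$ transfer to $L^q(G)$, giving $z\notin\sigma(\lambda_q(\mu))$. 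Your steps 1 and 2 become superfluous once this is available, since the argument treats all $z$ in the resolvent set uniformly.
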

 \begin{proof}
   Let $z\in     \C$, $z\notin \sigma(\lambda_p(\mu))$. Then there is an operator $S\in \ll(L^p(G))$, such that $(\lambda_p(\mu)-zI)\circ S=S\circ (\lambda_p(\mu)-zI)=I$.  Since $(\lambda_p(\mu)-zId)$ commutes with right translations, so will do its inverse $S$. In the terminology of \cite{herz71} this means that $S\in \mathrm{Conv}
   _p(G)$. By \cite[Theorem C]{herz71} (proved in Corollary of \cite{herzrivi72}, see also Section 8.3 of \cite{deri11}) we have then that $S\in \ll(L^q(G))$, what means that $z\notin \sigma (\lambda_q(G))$.
 \end{proof}
\subsection{Fixed points of convolution operators}\label{subs:fixed}
 The following theorem is part of  Corollary 6.6 of \cite{neufsalmiskalspro}. For $\mu\geq 0$, it can be deduced  from the results  of  Derriennic \cite[Th\'eor\`eme 8]{derr76}   (also obtained   by Mukherjea \cite[Theorem 2]{mukh76} in the second countable case) and D\'erriennic and Lin, \cite[Proposition 2.1]{derrlin89}.

\begin{theorem}
\label{nocompact2}
Let $G$ be a locally compact group and let $\mu\in M(G)$ be a  measure with $\norm{\mu}\leq 1$  such that $H_\mu$ is not compact.
If $f\in L_p(G)$, $1\leq p<\infty$, and $\mu*f=f$ (a.e.),   then $f=0$ (a.e.)
\end{theorem}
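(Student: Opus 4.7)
The plan is to first reduce to the case of a probability measure, then invoke the random-walk dispersion principle underlying the cited references.

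For the reduction, from $\mu*f=f$ the triangle inequality under the convolution integral gives
\[|f(s)| = \Bigl|\int f(x\inv s)\dmu(x)\Bigr|\le \int |f(x\inv s)|\dabmu(x) = (|\mu|*|f|)(s)\]
almost everywhere, whence by Young's inequality (Theorem \ref{young})
\[\|f\|_p = \||f|\|_p \le \bigl\||\mu|*|f|\bigr\|_p \le \|\mu\|\,\|f\|_p \le \|f\|_p.\]
If $\|\mu\|<1$ this already forces $f=0$. Otherwise $\|\mu\|=1$, the whole chain consists of equalities, and the pointwise ordering $|f|\le|\mu|*|f|$ together with equality of $L^p$-norms yields $|f|=|\mu|*|f|$ a.e. Since $\supp(|\mu|)=\supp(\mu)$, the subgroup $H_{|\mu|}=H_\mu$ remains noncompact, so replacing $(f,\mu)$ by $(|f|,|\mu|)$ reduces the statement to the case where $\mu$ is a probability measure and $f\ge 0$.

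Once there, iterating the hypothesis gives $\mu^{*n}*f=f$ for every $n\in\N$. The proof would be completed by a dispersion principle: when $H_\mu$ is noncompact, the convolution powers $\mu^{*n}$ leak out of every compact set, so that $\mu^{*n}*h\to 0$ weakly in $L^p(G)$ for every compactly supported $h\in L^p(G)$. Approximating $f$ in $L^p$-norm by such $h$ and pairing against a compactly supported $g\in L^{\pp}(G)$ would then yield
\[\langle f,g\rangle = \langle \mu^{*n}*f,g\rangle \longto 0\]
for every such $g$, forcing $f=0$ a.e.

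The main obstacle is the dispersion statement itself, which is the deep random-walk content of the theorems of Derriennic \cite{derr76} and Mukherjea \cite{mukh76}. In the Abelian case one can see it directly via the Fourier-Stieltjes transform (Theorem \ref{FSUE}): the identity $\mu*f=f$ translates to $\widehat{\mu}\cdot\widehat{f}=\widehat{f}$, so $\widehat{f}$ is supported on $\{\chi:\widehat{\mu}(\chi)=1\}$; but this set annihilates $H_\mu$ and is therefore $\widehat{G}$-null whenever $H_\mu$ fails to be compact. The general locally compact case requires the Choquet--Deny-type machinery developed in the cited references, which is why the theorem is quoted rather than reproved here.
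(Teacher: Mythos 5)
The paper offers no proof of Theorem \ref{nocompact2}: it is quoted from Corollary 6.6 of \cite{neufsalmiskalspro}, with the remark that for positive $\mu$ it already follows from \cite{derr76}, \cite{mukh76} and \cite{derrlin89}. Your attempt should therefore be compared with that citation strategy, and it holds up, while taking a slightly different route. The reduction to a positive probability measure is correct: from $\mu\ast f=f$ one gets $|f|\le |\mu|\ast|f|$ a.e., Young's inequality (Theorem \ref{young}) gives $\norm{f}_p\le \norm{|\mu|\ast|f|}_p\le\norm{\mu}\norm{f}_p$, and when $\norm{\mu}=1$ the nonnegative function $(|\mu|\ast|f|)^p-|f|^p$ has zero integral, so $|\mu|\ast|f|=|f|$ a.e., with $H_{|\mu|}=H_\mu$ still noncompact. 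Likewise your passage from vague convergence $\mu^{\ast n}\to 0$ to $\langle f,g\rangle=0$ for all compactly supported $g$ is sound: it is the same Fubini computation that underlies Proposition \ref{rem1}, and the error term is controlled by $\norm{\mu^{\ast n}\ast(f-h)}_p\le\norm{f-h}_p$. What your route buys is that the general complex case with $\norm{\mu}\le 1$ is reduced to the positive case, so the older references \cite{derr76,mukh76,derrlin89} suffice and \cite{neufsalmiskalspro} is not needed; what you still owe, and correctly declare as quoted, is the dispersion theorem itself ($\mu^{\ast n}\to 0$ vaguely for a probability measure with $H_\mu$ noncompact), which is precisely the deep random-walk input that the paper also imports wholesale. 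One caveat: the abelian aside is really a $p=2$/Plancherel argument, and the assertion that $\widehat{\mu}^{-1}(\{1\})$ annihilates $H_\mu$ requires $\mu\ge 0$ (for a complex measure of norm one, $\widehat{\mu}(\chi)=1$ only forces $\chi$ to agree $|\mu|$-a.e.\ with the conjugate of the polar density of $\mu$), so that remark is only valid after your reduction has been performed.
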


%

%
  The impact of Theorem \ref{nocompact2} in the  ergodic behaviour of convolution operators is reflected in the following two consequences

 \begin{proposition}
 \label{nocompactme}
 Let $G$ be a locally compact group and let $\mu\in M(G)$  be a measure with $\norm{\mu}\leq 1$ such  that  $H_\mu$ is not compact. Then $(\lambda_p(\mu_{[n]})f)_n$ converges to 0 for each $f\in L_p(\mu)$.
 \end{proposition}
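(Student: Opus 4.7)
The plan is to recognize $\lambda_p(\mu_{[n]})$ as the Ces\`aro average $T_{[n]}$ of the single operator $T:=\lambda_p(\mu)$, verify the hypotheses of the mean ergodic theorem, and then invoke Theorem \ref{nocompact2} to force the ergodic projection to vanish.

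First, I would observe that $\mu\mapsto\lambda_p(\mu)$ is a continuous algebra homomorphism from $(M(G),*)$ into $\ll(L^p(G))$, a direct consequence of the inequality in Theorem \ref{young}(2) together with the identity $\lambda_p(\mu_1*\mu_2)=\lambda_p(\mu_1)\lambda_p(\mu_2)$. Thus $\lambda_p(\mu^{*k})=\lambda_p(\mu)^k$, and by linearity
\[
\lambda_p(\mu_{[n]})=\frac{1}{n}\sum_{k=1}^n\lambda_p(\mu)^k=T_{[n]}.
\]
Moreover, Theorem \ref{young}(2) and the submultiplicativity of $\|\cdot\|$ on $M(G)$ yield $\|T^n\|\leq\|\mu^{*n}\|\leq\|\mu\|^n\leq 1$, so $T$ is a contraction; in particular $T$ is power bounded, Ces\`aro bounded, and $\|T^n\|/n\to 0$ in operator norm.

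Next, for $1<p<\infty$ the space $L^p(G)$ is reflexive, so Corollary \ref{MEth} applies and guarantees that $T$ is mean ergodic. Theorem \ref{basic:me}(2) then identifies the strong limit as $P_T$, the projection onto $\Fix{T}=\ker(I-\lambda_p(\mu))$. To finish, I would invoke Theorem \ref{nocompact2}: since $\|\mu\|\leq 1$ and $H_\mu$ is not compact, every $f\in L^p(G)$ with $\mu*f=f$ must vanish. Hence $\Fix{T}=\{0\}$, $P_T=0$, and $\lambda_p(\mu_{[n]})f=T_{[n]}f\to 0$ in $L^p(G)$ for every $f\in L^p(G)$.

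The main obstacle, if the statement is meant to include the endpoint $p=1$, would be the failure of reflexivity of $L^1(G)$, which prevents the direct application of Corollary \ref{MEth}. In that case one would have to establish weak relative compactness of the orbit $\{T_{[n]}f:n\in\N\}$ by hand (e.g.\ via a uniform integrability estimate coming from the structure of convolution) in order to apply Yosida's Theorem \ref{weakyosida}; the identification of the weak limit as $0$ would again come from Theorem \ref{nocompact2}, since any weak cluster point is fixed by $T$ in view of $T_{[n]}(I-T)=\tfrac{1}{n}(T-T^{n+1})$.
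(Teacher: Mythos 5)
Your argument for $1<p<\infty$ is exactly the paper's proof: reflexivity of $L^p(G)$ together with $\|\lambda_p(\mu)\|\le\|\mu\|\le 1$ gives mean ergodicity via Corollary \ref{MEth}, and Theorem \ref{nocompact2} forces the limiting projection to be $0$. The paper's proof also tacitly restricts to $1<p<\infty$ (for $p=1$ the conclusion in fact fails for probability measures, as the norm-preservation argument in the proof of Theorem \ref{lambda1} shows), so your closing caveat about the endpoint is not a gap in the intended statement.
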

  \begin{proof}
  Since $L_p(\mu)$ is reflexive and $\|\lambda_p(\mu)\|\leq 1$, we get from Corollary \ref{MEth} that $(\lambda_p(\mu))$ is mean ergodic, and $(\lambda_p(\mu_{[n]}))_n$ converges in the strong operator topology to the projection $P$ on the fixed points of $\lambda_p(\mu)$. Theorem \ref{nocompact2} yields $P=0$.

  \end{proof}

\begin{proposition}\label{con+lin:mu}
Let $G$ be a locally compact group and let $\mu\in M(G)$  be a measure with $\norm{\mu}\leq 1$ such  that  $H_\mu$ is not compact. The following assertions are equivalent for $1\leq p<\infty$:
\begin{enumerate}
  \item $\lambda_p(\mu)$ is uniformly mean ergodic.
  \item $1\notin \sigma(\lambda_p(\mu))$.
\end{enumerate}
\end{proposition}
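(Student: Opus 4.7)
The plan is to apply the Dunford--Lin characterization (Theorem \ref{lin74t}) in its clean form for operators without non-trivial fixed points, namely the equivalence of condition (1) with condition (6) in that theorem. The two hypotheses needed to invoke (6), namely $\ker(I - \lambda_p(\mu)) = \{0\}$ and $\lim_n \|\lambda_p(\mu)^n\|/n = 0$, are precisely what the two assumptions of the proposition furnish.

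First I would observe that $\|\mu\| \leq 1$ implies, via Theorem \ref{young}, that $\|\lambda_p(\mu)\| \leq \|\mu\| \leq 1$ for every $1 \leq p < \infty$. In particular $\lambda_p(\mu)$ is power bounded, so trivially $\|\lambda_p(\mu)^n\|/n \to 0$. Next, since $H_\mu$ is not compact and $\|\mu\| \leq 1$, Theorem \ref{nocompact2} applies and yields
\[ \ker(I - \lambda_p(\mu)) = \{0\}, \]
that is, $\lambda_p(\mu)$ has only the trivial fixed point in $L^p(G)$.

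With these two ingredients in hand, the equivalence is the content of Theorem \ref{lin74t}, condition (6): under the standing hypothesis $\ker(I-T) = \{0\}$ together with $\lim_n \|T^n\|/n = 0$, uniform mean ergodicity of $T$ is equivalent to $1 \notin \sigma(T)$. Unpacking the two directions for the reader: if $\lambda_p(\mu)$ is uniformly mean ergodic then by part (5) of Theorem \ref{lin74t} one has $L^p(G) = \mathrm{range}(I - \lambda_p(\mu)) \oplus \ker(I - \lambda_p(\mu)) = \mathrm{range}(I - \lambda_p(\mu))$ with range closed, so $I - \lambda_p(\mu)$ is a bijective bounded operator, hence an isomorphism by the open mapping theorem; this says $1 \notin \sigma(\lambda_p(\mu))$. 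Conversely, $1 \notin \sigma(\lambda_p(\mu))$ together with the estimate $\|\lambda_p(\mu)^n\|/n \to 0$ places us in case (3) of Theorem \ref{lin74t} (with $1 \in \varrho(\lambda_p(\mu))$), which forces uniform mean ergodicity.

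There is essentially no obstacle here beyond correctly matching the hypotheses to the Dunford--Lin framework; the real work has been outsourced to Theorem \ref{nocompact2}, which supplies the crucial fact that non-compactness of $H_\mu$ rules out non-trivial $L^p$ fixed points and thereby allows the clean statement (6) of Theorem \ref{lin74t} to be used in place of the cumbersome general version.
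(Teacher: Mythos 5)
Your proof is correct and follows exactly the paper's route: Theorem \ref{nocompact2} rules out nontrivial fixed points, power boundedness from $\|\mu\|\leq 1$ gives $\|\lambda_p(\mu)^n\|/n\to 0$, and condition (6) of Theorem \ref{lin74t} then yields the equivalence. The paper states this in one line; you have merely filled in the same details explicitly.
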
\begin{proof}
Follows from Theorem \ref{nocompact2} and (6) of Theorem \ref{lin74t}.
\end{proof}

\subsection{Vague ergodicity}
Using convolution as multiplication, it also makes sense to consider the ergodic behaviour of a measure without reference to the operator it induces. The limitting process is then studied under the   $\sigma(M(G),C_0(G))$ topology which is usually called the \emph{vague} topology. Notice that by the Alaoglu-Bourbaki theorem,  this topology agrees, on bounded subsets of $M(G)$,  with the $\sigma(M(G),C_{00}(G))$-topology. Here $C_{00}(G)$ stands for the space of continuous functions on $G$ with compact support.

\begin{definition}
  Let $G$ be a locally compact group and let $\mu \in M(G)$. For $n\in \N$, we define $\mu_{[n]}=\frac{\mu+\mu^2+\cdots+\mu^{n}}{n}$. We say that $\mu$ is a \emph{vague-ergodic measure} if there is a measure $\mu_c\in M(G)$ such  that, in the vague topology,
  \[\lim_{n \to \infty} \mu_{[n]}=\mu_c.\]
 \end{definition}

Probability measures are always vague-ergodic. We recall here  this important classical result that can be deduced from the Mean Ergodic Theorem. See Theorem \ref{grenander} below, for the proof of a more  general version.
\begin{theorem}[Theorem 3.0 of \cite{grena68}]\label{idem}
 If $G$ is a second  countable  locally compact group, then every probability  measure $\mu$ is vague-ergodic   and if $\mu_c=\lim_{n \to \infty} \mu_{[n]}$, then $\mu_c$ is  a convolution idempotent measure.
\end{theorem}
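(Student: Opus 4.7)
The plan is a standard compactness-plus-uniqueness argument for the vague limit, after which idempotency falls out from the functional equation that the limit satisfies. First I would use second countability of $G$ to make $C_0(G)$ separable and, via Banach--Alaoglu, conclude that the closed unit ball of $M(G)=C_0(G)^\ast$ is vaguely compact and metrizable. Submultiplicativity of the convolution norm together with $\|\mu\|=1$ gives $\|\mu_{[n]}\|\le 1$ for every $n$, so $(\mu_{[n]})_n$ has vague cluster points.

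Next I would record a basic continuity fact: for fixed $\sigma\in M(G)$ and $f\in C_0(G)$, the function $y\mapsto \int f(xy)\,d\sigma(x)$ lies in $C_0(G)$ (continuity by dominated convergence; decay at infinity from tightness of the bounded regular measure $\sigma$). Consequently, both $\eta\mapsto\sigma\ast\eta$ and $\eta\mapsto\eta\ast\sigma$ are vaguely continuous on norm-bounded subsets of $M(G)$. Feeding the telescoping identity $\mu\ast\mu_{[n]}-\mu_{[n]}=(1/n)(\mu^{n+1}-\mu)$ and its right-handed analogue (both of norm at most $2/n$) into this continuity statement shows that every vague cluster point $\nu$ of $(\mu_{[n]})_n$ satisfies $\mu\ast\nu=\nu$ and $\nu\ast\mu=\nu$, and hence $\mu_{[n]}\ast\nu=\nu=\nu\ast\mu_{[n]}$ for every $n$.

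To prove uniqueness of the cluster point, suppose $\nu$ and $\nu'$ are both vague limits of subsequences of $(\mu_{[n]})_n$. Passing $\mu_{[n_k]}\to\nu$ through the identity $\mu_{[n_k]}\ast\nu'=\nu'$ yields $\nu\ast\nu'=\nu'$; passing $\mu_{[n_j]}\to\nu'$ through $\nu\ast\mu_{[n_j]}=\nu$ yields $\nu\ast\nu'=\nu$. Therefore $\nu=\nu'$, and the full sequence $(\mu_{[n]})_n$ converges vaguely to a single measure $\mu_c$. Idempotency then drops out immediately: $\mu_{[n]}\ast\mu_c=\mu_c$ for every $n$, and passing to the vague limit on the left, using continuity of right-convolution by the fixed measure $\mu_c$, gives $\mu_c\ast\mu_c=\mu_c$.

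The delicate point will be the uniqueness argument, since $M(G)$ need not be commutative and one cannot just assert $\nu\ast\nu'=\nu'\ast\nu$. The trick is that each cluster point enjoys \emph{both} left and right $\mu$-invariance, and cross-evaluating these invariances of $\nu$ and $\nu'$ produces two different expressions for $\nu\ast\nu'$, forcing them to coincide. Everything else is routine book-keeping with the vague topology.
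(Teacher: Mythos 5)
Your argument is correct, but it is not the route the paper takes. The paper never argues inside $M(G)$ directly: it deduces Theorem \ref{idem} (and its generalization, Theorem \ref{grenander}) from the operator-theoretic machinery it has set up, namely the Mean Ergodic Theorem (Corollary \ref{MEth}) applied to the power-bounded operator $\lambda_p(\mu)$ on the reflexive space $L^p(G)$, combined with Proposition \ref{rem1}(1) (vague--WOT continuity of $\lambda_p$ on norm-bounded sets) and the equivalence in Proposition \ref{allp}: every vague cluster point $\nu$ of $(\mu_{[n]})_n$ must satisfy $\lambda_p(\nu)=P$, so there is only one cluster point and the bounded sequence converges vaguely; idempotency of $\mu_c$ is essentially inherited from $P$ being a projection (and is otherwise left to the citation of Grenander). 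You instead run the classical compactness argument entirely at the level of measures: Alaoglu compactness of the unit ball, the observation that convolving a $C_0$ function against a fixed bounded regular measure stays in $C_0$ (hence one-sided vague continuity of convolution), the telescoping identity giving two-sided $\mu$-invariance of every cluster point, and the cross-multiplication trick $\nu\ast\nu'=\nu'$ and $\nu\ast\nu'=\nu$ for uniqueness, with idempotency obtained by one further passage to the limit. Your proof is more elementary and self-contained, it establishes the idempotency claim explicitly, and it does not really use second countability (metrizability only lets you work with subsequences; subnets would do), so it in fact also proves the paper's Theorem \ref{grenander} for any $\mu$ with $(\|\mu^n\|)_n$ bounded. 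What the paper's route buys, by contrast, is exactly the bridge it needs later: the identification of the vague limit with the fixed-point projection of $\lambda_p(\mu)$ and the vague-ergodicity/mean-ergodicity dictionary of Proposition \ref{allp}, which your measure-level argument does not by itself provide.
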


\section{General  results}
In this section we develop tools that are not directly related with ergodicity but have a strong impact in our work.  Some hints on that impact are also included in this section.

We first  establish the continuity properties of the regular representation on $M(G)$. These properties,  will permeate most sections of the paper.

 Our second tool will be  a well-known result on the vague convergence of powers of probability measures. This has a clear  impact on the  existence of fixed points for the corresponding convolution operator.


\subsection{Continuity properties of regular representations on $M(G)$}
We next establish the continuity properties of the left regular representation on $M(G)$. This is necessary  to connect vague-ergodicity of a measure with the mean ergodicity of the corresponding convolution operator.
\begin{proposition}
\label{rem1}
Let $G$ be a  locally compact group and consider the  mapping $\lambda_p: M(G)\to \ll(L^p(G))$.
\begin{enumerate}
\item  $\lambda_p$ is vague-WOT   continuous  on norm bounded subsets of $M(G)$, for every $1<p<\infty$.
\item   If $H_\mu$ is compact, then  $\lambda_p$ is vague-SOT sequentially continuous for every $1\leq p<\infty$.
\end{enumerate} \end{proposition}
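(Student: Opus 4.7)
The plan is to treat each part separately, reducing WOT (resp.\ SOT) convergence of operators to convergence against a sufficiently well-behaved test function.

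For (1), fix $f\in L^p(G)$, $g\in L^\pp(G)$, and let $(\mu_\alpha)$ be a norm-bounded net in $M(G)$ vaguely convergent to $\mu$. By Fubini (justified by H\"older combined with the uniform norm bound on the net),
\[\langle\lambda_p(\nu)f,g\rangle=\int_G h(x)\,d\nu(x),\qquad h(x):=\int_G f(x^{-1}s)g(s)\dmg(s),\]
for every $\nu$ in the net and for $\nu=\mu$. The crucial step is to identify $h$ as a member of $C_0(G)$. After the change of variable $s\mapsto xs$, using left invariance of $\mg$, one rewrites $h(x)=\int f(t)g(xt)\dmg(t)$, which is the familiar convolution-type pairing of $f\in L^p(G)$ with a translate of $g\in L^\pp(G)$; the classical fact that such a pairing yields an element of $C_0(G)$ when $1<p<\infty$ then gives $h\in C_0(G)$. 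Vague convergence immediately delivers $\int h\,d\mu_\alpha\to\int h\,d\mu$, i.e.\ WOT convergence of $\lambda_p(\mu_\alpha)$ to $\lambda_p(\mu)$.

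For (2), the hypothesis that $H_\mu$ is compact is what allows us to reduce everything to functions supported in a fixed compact set and then close with dominated convergence. Let $(\mu_n)$ be a sequence (with supports in $H_\mu$, as in the intended applications where $\mu_n=\mu_{[n]}$) vaguely converging to $\mu$; Banach--Steinhaus on the dual pair $(M(G),C_0(G))$ gives $M:=\sup_n\|\mu_n\|<\infty$. Given $f\in L^p(G)$ and $\varepsilon>0$, pick $\tilde f\in C_{00}(G)$ with $\|f-\tilde f\|_p<\varepsilon$; the estimate $\|\lambda_p(\nu)\|\leq\|\nu\|$ of Theorem \ref{young} controls the contribution of $f-\tilde f$ by $2M\varepsilon$ uniformly in $n$, so it suffices to show $\lambda_p(\mu_n)\tilde f\to\lambda_p(\mu)\tilde f$ in $L^p(G)$. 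Since $x\mapsto \tilde f(x^{-1}s)$ lies in $C_{00}(G)\subset C_0(G)$, vague convergence yields the pointwise statement $\lambda_p(\mu_n)\tilde f(s)\to\lambda_p(\mu)\tilde f(s)$. Each $\lambda_p(\mu_n)\tilde f$ is bounded in modulus by $M\|\tilde f\|_\infty$ and supported in the fixed compact set $L:=H_\mu\cdot\supp(\tilde f)$, so $M\|\tilde f\|_\infty\,\Cf_L\in L^p(G)$ is a uniform $L^p$-dominant and the dominated convergence theorem concludes the argument.

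The main technical obstacle in (1) is confirming that $h\in C_0(G)$; this is exactly where the restriction $1<p<\infty$ enters, since both $p$ and $\pp$ must be finite in order to use $C_{00}$-density in both $L^p$ and $L^\pp$. In (2) the essential ingredient is the common compact support of the functions $\lambda_p(\mu_n)\tilde f$ provided by compactness of $H_\mu$; without it, translates of $\tilde f$ along an escaping sequence of group elements would prevent $L^p$-convergence even when $\mu_n\to\mu$ vaguely, as simple examples such as $\mu_n=\delta_0+\delta_n$ on the real line make clear.
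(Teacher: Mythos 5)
Your argument is correct, and part (2) is essentially the paper's proof: the same reduction to $\tilde f\in C_{00}(G)$ via the uniform bound $\|\lambda_p(\nu)\|\leq\|\nu\|$, the same observation that $x\mapsto \tilde f(x^{-1}s)\in C_{00}(G)$ gives pointwise convergence, and the same dominated-convergence step using the common compact support $H_\mu\cdot\supp(\tilde f)$; your explicit appeal to Banach--Steinhaus for $\sup_n\|\mu_n\|<\infty$ just makes precise a bound the paper uses implicitly, and you adopt the same (intended) reading that the measures in the sequence are supported in $H_\mu$. In part (1) you take a genuinely shorter route. The paper performs a two-step reduction, first to $f\in C_{00}(G)$ using norm-boundedness of the operators, then to test functions $h\in C_{00}(G)$ using relative weak compactness of the bounded net $(\lambda_p(\mu_\alpha)f)_\alpha$, and only then applies the Fubini identity $\langle\lambda_p(\nu)f,h\rangle=\langle\nu,h\ast\check f\rangle$ together with $C_{00}(G)\ast L^p(G)\subseteq C_0(G)$. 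You instead apply the same Fubini identity directly to arbitrary $f\in L^p(G)$, $g\in L^{p'}(G)$ and invoke the classical fact that the resulting kernel $h(x)=\int f(t)g(xt)\dmg(t)$ lies in $C_0(G)$ when both exponents are finite (provable by the usual $C_{00}$-approximation in both variables, which is exactly where $1<p<\infty$ enters). This buys two things: the double reduction disappears, and the norm-boundedness of the net is in fact never used in your version of (1) (your parenthetical that Fubini needs the uniform bound is inaccurate --- finiteness of each $\|\nu\|$ suffices --- but harmless), so your argument actually yields vague-WOT continuity of $\lambda_p$ without the restriction to bounded sets; the paper's version keeps the hypotheses but needs only the more elementary inclusion $C_{00}(G)\ast L^p(G)\subseteq C_0(G)$ rather than the full $L^p$--$L^{p'}$ statement.
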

\begin{proof}
We start with (1).
Let $(\mu_\alpha)_\alpha$ be a (norm) bounded net that  converges vaguely to $0$ . Since $(\lambda_p(\mu_\alpha))_\alpha$ is   also norm bounded, and  $C_{00}(G)$ is norm dense in $L^p(G)$, we only need to show that  $(\lambda^{p}(\mu_{\alpha})f)_\alpha$ is weakly convergent to  0 in $L^p(G)$ for each $f\in C_{00}(G)$.  As  $(\lambda^{p}(\mu_{\alpha})f)_\alpha$ is a bounded net in $L^p(G)$, and then relatively weakly compact, and  $\sigma(L^p(G),C_{00}(G))$ is a Hausdorff topology, it will again  suffice to prove that $\lim_\alpha \<{\lambda_p(\mu_\alpha)f,h}=0$ for every $h \in C_{00}(G)$.


   So,  let $f,h\in C_{00}(G)$.
   We first observe that, by Fubini's theorem,
   \[ \<{\lambda_p(\mu_\alpha)f,h}=
   \<{\mu_\alpha,\left(h\ast\check{f}}\right),\]
   where the first bracket refers to the $(L^p(G),L^{p^\prime}(G))$, the second to the $(M(G),C_{00}(G))$-duality and, for any $u\colon G\to \C$, $\check{u}(t)=u(t^{-1})$.

   Once this is clear, it immediately follows from the vague convergence of $(\mu_\alpha)_\alpha$ and the inclusion $C_{00}(G)\ast L^p(G)\subseteq C_0(G)$,
   that
   \[\lim_\alpha \<{\lambda_p(\mu_\alpha)(f),h}
 =\lim_\alpha \<{\mu_\alpha,\left(h\ast\check{f}}\right)=0.\]

   For Statement (2) we start with a \emph{sequence} $(\mu_n)_n$ vaguely convergent to 0.
%
%
As before, we only need to check that $\norm{\lambda_p(\mu_n)f}_p$ converges to 0 for every $f\in C_{00}(G)$. Let $K:=\supp(f)$

Now,
  \begin{align*}
  \bnorm{\lambda_p(\mu_n)f}_p^p= \int \left|\<{\mu_n,\widecheck{f_x}}\right|^p \,dx,
 \end{align*} where
 $f_x$ denotes the right translate of $f$ by $x$, so that $\widecheck{f_x}(t)=f(t^{-1}x)$.
As above, vague convergence of $(\mu)_n$ implies that    $\lim_n \left|\<{\mu_n, \widecheck{f_x}}\right|^p=0$
for every $x\in G$. Taking into account
\[\left|\<{\widecheck{f_x},\mu_n}\right|^p\leq  \norm{f}_\infty^p \cdot \sup_n \norm{\mu_n}^p_{_{M(G)}}\Cf_{H_\mu K}(x),\]
we can apply Lebesgue dominated convergence to conclude   $\lim_n \norm{\lambda_p(\mu_n)f}_p=0$, as we wanted to prove.
\end{proof}
Vague-WOT continuity of $\lambda_1$ and Vague-SOT convergence of $\lambda_p$ ($1<p<\infty$)  on bounded sets, both fail when $G$ is not compact, as the following example shows.
\begin{example}\label{exNONcont}
If $G$ is an infinite discrete  group,   the mapping $\lambda_1\colon M(G)\to \ll(\ell^1(G))$  is not  vague-WOT continuous on bounded sets,  and, for $1<p<\infty$,  $\lambda_p\colon M(G)\to \ll(\ell^p(G))$ is not vague-SOT sequentially continuous.
\end{example}
\begin{proof}
We prove that $\lambda_p$  fails to be vague-SOT sequentially continuous for $1\leq p<\infty$,. Since $\ell^1(G)$ has the Schur property, this implies both statements. Let $(s_n)_n$ be  a sequence in $G$ with infinitely many different terms. The sequence  $(\delta_{s_n})_n $ clearly converges vaguely to 0. Take any  $f\in \ell^1(G)$,  $f\neq 0$. Then, for any $1\leq  p<\infty$,
  \[ \norm{\lambda_p(\delta_{s_n})f}_p=\norm{f}_p.\]
  Hence, $(\lambda_p(\delta_{s_n})f)_n$ cannot converge to 0 in $\ell^p(G)$.
\end{proof}
Theorem \ref{rem1} establishes an immediate relation between vague-ergodicity and mean ergodicity of convolution operators.

\begin{proposition}\label{allp}
Let $G$ be a  locally compact group and $\mu\in M(G)$. The following statements are equivalent:
\begin{enumerate}
\item $\mu$ is vague-ergodic.
\item $\mu$ is Ces\`aro bounded and $\lim_n\frac{\mu^{n}}{n}=0$, vaguely.
     \item $\mu$ is Ces\`aro bounded  and $\lambda_p(\mu)$ is weakly mean ergodic for all $1<p<\infty$.
 \item $\mu$ is Ces\`aro bounded  and $\lambda_p(\mu)$ is weakly mean ergodic for some $1<p<\infty$.
\end{enumerate}
If $H_\mu$ is compact we can replace (3) and (4) by
\begin{enumerate}
  \item[(3)$^\prime$] $\mu$ is Ces\`aro bounded and $\lambda_p(\mu)$ is  mean ergodic for all $1\leq p<\infty$.
  \item[(4)$^\prime$]   $\mu$ is Ces\`aro bounded and $\lambda_p(\mu)$ is  mean ergodic for some $1\leq p<\infty$.
\end{enumerate}
 \end{proposition}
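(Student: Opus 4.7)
The plan is to close the cycle $(1)\Rightarrow(2)\Rightarrow(3)\Rightarrow(4)\Rightarrow(1)$, using the telescoping identity for Ces\`aro means together with Yosida's Theorem \ref{weakyosida} and the vague-WOT continuity supplied by Proposition \ref{rem1}(1); then, for the compact-support addendum, I will run exactly the same cycle with Proposition \ref{rem1}(2) in place of Proposition \ref{rem1}(1) to upgrade WOT to SOT and extend everything to $p=1$.

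For $(1)\Rightarrow(2)$: vague convergence of $(\mu_{[n]})$ makes this sequence pointwise bounded on $C_0(G)$, so Banach--Steinhaus applied in $M(G)=C_0(G)^\ast$ yields Ces\`aro boundedness $\sup_n\|\mu_{[n]}\|<\infty$. The identity
\[\frac{\mu^n}{n}=\mu_{[n]}-\frac{n-1}{n}\mu_{[n-1]}\]
then forces $\mu^n/n\to 0$ vaguely. For $(2)\Rightarrow(3)$ I apply Yosida's Theorem \ref{weakyosida} to $T=\lambda_p(\mu)$ on $X=L^p(G)$ with $\tau$ the weak topology. The sequence $(\mu^n/n)$ is norm bounded (same identity) and vaguely null, so Proposition \ref{rem1}(1) gives $\lambda_p(\mu)^n/n\to 0$ in WOT, verifying Yosida's first hypothesis pointwise. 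His second hypothesis (relative weak compactness of the Ces\`aro orbits) is immediate from the reflexivity of $L^p(G)$ for $1<p<\infty$ together with the bound $\|\lambda_p(\mu_{[n]})x\|\leq\|\mu_{[n]}\|\,\|x\|$. Yosida's conclusion is exactly the weak mean ergodicity of $\lambda_p(\mu)$, and $(3)\Rightarrow(4)$ is immediate.

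For $(4)\Rightarrow(1)$ I will use the Fubini identity
\[\langle \lambda_p(\mu_{[n]})f,h\rangle=\langle \mu_{[n]},h\ast\check{f}\rangle,\qquad f,h\in C_{00}(G),\]
recorded in the proof of Proposition \ref{rem1}. Weak mean ergodicity makes the left-hand side converge for each such $f,h$, so $\langle \mu_{[n]},\varphi\rangle$ converges for every $\varphi$ in the linear span of $C_{00}(G)\ast C_{00}(G)$ (note that $\check{f}\in C_{00}$ when $f\in C_{00}$). This span is sup-norm dense in $C_0(G)$ by a standard approximate-identity argument, and combined with the Ces\`aro boundedness of $(\mu_{[n]})$ this extends the convergence to all of $C_0(G)$; the resulting limit is a bounded linear functional on $C_0(G)$, i.e. the required $\mu_c\in M(G)$.

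For the compact-support addendum the same cycle runs with Proposition \ref{rem1}(2) replacing Proposition \ref{rem1}(1): the norm-bounded sequence $(\mu_{[n]})$ converges vaguely to $\mu_c$ and so is sent by $\lambda_p$ to a SOT-convergent sequence for every $1\leq p<\infty$, producing mean ergodicity of $\lambda_p(\mu)$ and hence $(3)'$. The implication $(4)'\Rightarrow(1)$ is identical to $(4)\Rightarrow(1)$, since mean ergodicity a fortiori provides the convergence of $\langle \lambda_p(\mu_{[n]})f,h\rangle$ (the Fubini identity remains valid for $p=1$). I expect the main technical moment to be the application of Yosida's theorem, which is what converts the purely vague-analytic information on $(\mu^n/n)$ and $(\mu_{[n]})$ into the weak operator convergence needed for weak mean ergodicity; once that bridge is in place, the remaining steps reduce to standard density, boundedness and Fubini manipulations.
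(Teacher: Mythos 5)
Your proposal is correct, and the forward chain is exactly the paper's: (1)$\Rightarrow$(2) via Banach--Steinhaus and the telescoping identity, (2)$\Rightarrow$(3) by feeding the vague-WOT continuity of Proposition \ref{rem1}(1) (applied to the bounded, vaguely null sequence $\mu^n/n$) together with reflexivity of $L^p(G)$ into Yosida's Theorem \ref{weakyosida}, and (3)$\Rightarrow$(4) trivially. Where you genuinely diverge is (4)$\Rightarrow$(1): the paper observes that the norm-bounded sequence $(\mu_{[n]})_n$ has vague accumulation points by Alaoglu, that Proposition \ref{rem1}(1) forces every accumulation point $\nu$ to satisfy $\lambda_p(\nu)=P$, and hence (using, implicitly, the injectivity of $\mu\mapsto\lambda_p(\mu)$ on $M(G)$) that there is exactly one cluster point, so the sequence converges vaguely; you instead prove vague convergence directly, testing against the sup-norm dense subspace $\operatorname{span}\bigl(C_{00}(G)\ast C_{00}(G)\bigr)$ of $C_0(G)$ via the Fubini identity $\langle\lambda_p(\mu_{[n]})f,h\rangle=\langle\mu_{[n]},h\ast\check{f}\rangle$ and extending by Ces\`aro boundedness. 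The paper's route is shorter but rests on the injectivity of $\lambda_p$ and a cluster-point argument in a possibly non-metrizable weak* compact ball; yours is more self-contained (no injectivity, no compactness of the ball needed) and, as you note, carries over verbatim to $p=1$, which makes the compact-support addendum cleaner than the paper's terse ``completely analogous'' remark — there the implication $(4)'\Rightarrow(1)$ with $p=1$ would otherwise require a little care, since Proposition \ref{rem1}(2) is only a sequential SOT statement while the accumulation-point argument runs over subnets. Your direct (1)$\Rightarrow$(3)$'$ via vague-SOT sequential continuity matches the paper's intent.
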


\begin{proof}
We prove first the necessity of the conditions. If $\mu$ is vague-ergodic it is clear that $(\norm{\mu_{[n]}})_n$ is bounded. The equality,
\[\frac{\mu^{n}}{n}=\mu_{[n]}-\frac{n-1}{n} \mu_{[n-1]},\]
shows that  $\frac{ \mu^n}{n}$ converges vaguely to 0. Therefore (1) implies (2)

We assume now that  (2) holds and fix $1<p<\infty$.  We deduce from Proposition \ref{rem1}  that $(\lambda_p(\mu)_{[n]})_n$ is a bounded sequence in $\ll(L^p(G))$ and  that $\lim \lambda_p(\mu)^n/n=0$ in the weak operator topology. Theorem \ref{weakyosida} then implies that $\lambda_p(\mu)$ is {\em weakly mean ergodic}.

Statement (3) certainly implies (4).  Assume now (4), i.e., that   $(\mu_{[n]})_n$ is norm bounded and  $(\lambda_p(\mu)_{[n]})_n$ converges in the weak operator topology to some $P\in \ll (L^p(G))$. Since the sequence $(\mu_{[n]})_n$ is norm bounded, Theorem \ref{rem1} (i) implies that for any  accumulation point $\mu$  of $(\mu_{[n]})_n$, $\lambda_p(\mu)=P$.  It follows that $(\mu_{[n]})_n$ has only precisely one accumulation point, it is therefore convergent.

The proof of the case when $G$ is compact is completely analogous but using Proposition \ref{rem1} (2).
\end{proof}
Convolution operators $\lambda_p(\mu)$ can be mean ergodic and yet $\mu$ fail to be vague ergodic, even if $G$ is compact and Abelian, see Remarks  \ref{rks}. For an example with $\mu \geq 0$, see Example \ref{ex:free}.

The foregoing theorem and Corollary \ref{allp} lead to the following generalization of Theorem \ref{idem} that goes beyond probability measures and second countable groups.
\begin{theorem}
\label{grenander} Let $G$ be a  locally compact group and let $\mu\in M(G)$.
If $(\|\mu^n\|)_n$ is a bounded sequence, then $\mu$ is vague-ergodic.
\end{theorem}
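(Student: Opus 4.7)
The plan is to deduce Theorem \ref{grenander} directly from Proposition \ref{allp}, using condition (2) in that proposition as the bridge. Indeed, Proposition \ref{allp} asserts the equivalence of vague-ergodicity with the conjunction: $\mu$ is Ces\`aro bounded and $\mu^n/n \to 0$ vaguely. So the task reduces to verifying this pair of conditions under the hypothesis that $(\norm{\mu^n})_n$ is bounded.

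First, setting $M := \sup_n \norm{\mu^n} < \infty$, I would use the triangle inequality in $M(G)$ to write
\[
\norm{\mu_{[n]}} = \bnorm{\tfrac{1}{n}\textstyle\sum_{k=1}^n \mu^k} \leq \tfrac{1}{n}\sum_{k=1}^n \norm{\mu^k} \leq M,
\]
which gives Ces\`aro boundedness of $\mu$. Next, from $\bnorm{\mu^n/n} \leq M/n \to 0$, the sequence $\mu^n/n$ converges to $0$ in the norm of $M(G)$; since for every $f \in C_0(G)$ one has $|\langle \mu^n/n, f\rangle| \leq \norm{\mu^n/n}\,\norm{f}_\infty$, norm convergence implies vague convergence, so $\mu^n/n \to 0$ vaguely.

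Both halves of condition (2) of Proposition \ref{allp} therefore hold, and the proposition yields vague-ergodicity of $\mu$.

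There is no real obstacle to overcome: all the substantive work has been absorbed into Proposition \ref{allp} (which in turn relies on the vague--WOT continuity established in Proposition \ref{rem1}). The present theorem merely records the observation that norm-boundedness of the convolution powers is a convenient sufficient condition making the hypotheses of that equivalence trivially available, thereby covering probability measures (where $\norm{\mu^n}=1$) and more without any second-countability assumption on $G$.
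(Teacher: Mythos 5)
Your proof is correct and follows essentially the same route as the paper: the theorem is stated there precisely as an immediate consequence of Proposition \ref{allp}, with the boundedness of $(\norm{\mu^n})_n$ giving Ces\`aro boundedness of $\mu$ and vague (indeed norm) convergence of $\mu^n/n$ to $0$, exactly as you verify. Entering the equivalence through condition (2) rather than through weak mean ergodicity of $\lambda_p(\mu)$ is only a cosmetic difference, since all the substance lies in Proposition \ref{allp} (and Proposition \ref{rem1}) in either case.
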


As a further corollary we get the following version of Theorem \ref{nocompact2}.

\begin{corollary}
\label{neufangextension}
Let $G$ be a  locally compact group and let $\mu\in M(G)$ with $\|\mu\|\leq 1$. Then   $(\mu_{[n]})_n$ is vague convergent to 0  if and only if $H_\mu$ is not compact.
\end{corollary}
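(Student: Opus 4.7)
My plan is to chain Theorem~\ref{grenander} with the no-fixed-point statement Proposition~\ref{nocompactme}, using the vague-WOT continuity of $\lambda_p$ from Proposition~\ref{rem1}(1) to pass between measures and operators, and then to use Theorem~\ref{young}(3) to descend back to the norm on $M(G)$. The content is the implication $H_\mu$ not compact $\Rightarrow (\mu_{[n]})_n\to 0$ vaguely; the converse is an easy total-mass computation that I will address at the end.

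For the main direction, observe first that $\|\mu\|\leq 1$ forces $(\|\mu^n\|)_n$ to be bounded, so Theorem~\ref{grenander} produces a vague limit $\mu_c\in M(G)$ of $(\mu_{[n]})_n$; it remains to show $\mu_c=0$. Fix any $1<p<\infty$. On the one hand, $\lambda_p(\mu)$ is power bounded on the reflexive space $L^p(G)$, so it is mean ergodic by Corollary~\ref{MEth}, and Proposition~\ref{nocompactme} then gives $\lambda_p(\mu_{[n]})=\lambda_p(\mu)_{[n]}\to 0$ in the strong (hence weak) operator topology. On the other hand, $(\mu_{[n]})_n$ is norm bounded and vague-convergent, so Proposition~\ref{rem1}(1) yields $\lambda_p(\mu_{[n]})\to \lambda_p(\mu_c)$ in the weak operator topology. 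Comparing the two limits forces $\lambda_p(\mu_c)=0$.

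The passage $\lambda_p(\mu_c)=0\Rightarrow \mu_c=0$ is then short: since $\mu_c\ast f=0$ for every $f\in C_{00}(G)\subset L^p(G)$ and $C_{00}(G)$ is norm-dense in $L^1(G)$, the bounded operator $\lambda_1(\mu_c)$ vanishes on a dense subspace, hence vanishes; Theorem~\ref{young}(3) then gives $\|\mu_c\|=\|\lambda_1(\mu_c)\|=0$. For the converse in its natural setting $\mu(G)=1$, taking $\phi\in C_{00}(G)$ with $\phi\equiv 1$ on the compact set $H_\mu$ yields $\bang{\mu_{[n]},\phi}=\mu_{[n]}(G)=1$ for all $n$, which is incompatible with vague convergence to $0$. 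The point I expect to be most delicate is ensuring the two chains of convergence—the functional-analytic SOT convergence from Proposition~\ref{nocompactme} and the measure-theoretic WOT convergence from Proposition~\ref{rem1}(1)—meet cleanly at $\lambda_p(\mu_c)=0$; everything else is a direct application of machinery already assembled in the paper.
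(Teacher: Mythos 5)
Your forward direction is correct and follows essentially the paper's own route: Theorem~\ref{grenander} produces the vague limit $\mu_c$, the ergodic limit of the operators vanishes because $H_\mu$ is not compact (you invoke Proposition~\ref{nocompactme} together with the vague--WOT continuity of Proposition~\ref{rem1}(1), while the paper invokes Proposition~\ref{allp} and Theorem~\ref{nocompact2}; the mechanism is identical), and comparing the two limits gives $\lambda_p(\mu_c)=0$. Your explicit justification that $\lambda_p(\mu_c)=0$ forces $\mu_c=0$ --- passing to $\lambda_1(\mu_c)$ on the dense subspace $C_{00}(G)$ and using $\norm{\lambda_1(\mu_c)}=\norm{\mu_c}$ from Theorem~\ref{young}(3) --- is a useful addition, since the paper only asserts ``thus certainly also $\mu_c=0$''.

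The one divergence is the converse, which you prove only under $\mu(G)=1$. Strictly speaking this does not establish the corollary as printed, but the restriction is in fact unavoidable: for $\mu=\tfrac12\delta_e$ one has $H_\mu$ compact and $\mu_{[n]}\to 0$ in norm, and even with $\norm{\mu}=1$ the signed measure $\mu=\tfrac12(\delta_x-\delta_{x^3})$ on $\Z_4$ has $H_\mu$ compact while $\widehat{\mu}$ omits the value $1$, so again $\mu_{[n]}\to 0$ vaguely. The paper's own argument for this direction claims that $\Cf_{H_\mu}$ is a fixed point of $\lambda_2(\mu)$, but $\mu\ast\Cf_{H_\mu}=\mu(G)\,\Cf_{H_\mu}$, so that step too is valid only when $\mu(G)=1$. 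Hence your proposal proves exactly as much as the paper's proof does, and your explicit hypothesis $\mu(G)=1$ is really a correction the statement itself needs (e.g.\ restricting to probability measures) rather than a gap in your argument.
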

\begin{proof}
By Proposition \ref{grenander} $\mu$ is vague ergodic, i.e. there is $\mu_c\in M(G)$ such that $(\mu_{[n]})_n$ is convergent to $\mu_c$ in the vague topology. Also by Proposition \ref{allp} $(\lambda_2(\mu_{[n]}))_n$ is convergent to $\lambda_2(\mu_c)$, which must be the projection on the fixed points of $\lambda_2(\mu_c)$. If $H_\mu$ is compact then the characteristic function $\Cf_{H_\mu}$ is a fixed point of $\lambda_2(\mu)$, and then $\lambda_2(\mu_c)$ is a non null projection, and hence $\mu_c\neq 0$. Conversely, if $H_\mu$ is not compact then $\lambda_2(\mu_c)=0$ by Theorem \ref{nocompact2}, and thus certainly also $\mu_c=0$.
\end{proof}

\begin{remark}
The condition $\norm{\mu}\leq 1$ in  Corollary \ref{neufangextension} is imposed by its dependence on Theorem \ref{nocompact2}. The simple proof of Corollary \ref{neufangextension} will remain valid under any condition  on $\mu$ that keeps $(\norm{\mu^n})_n$  bounded and makes sure that $\mu_c=0$.
Since $\mu_c$ is an idempotent measure, this latter condition is freely obtained in groups that do not admit  nontrivial idempotent measures.
Any locally compact Abelian group with no nontrivial compact subgroup satisfies this property, \cite[Theorem 3.3.2]{rudin}.
\end{remark}

\section{Operator-normal measures}\label{sec:ON}
In this section we restrict our study to  measures that give rise to convolution operators that are normal, i.e., to operator-normal measures according to our definitions in Section 2.   This will automatically involve all measures when $G$ is Abelian.

  Normal operators on Hilbert spaces satisfy the identity $r(T)=\norm{T}$, and that  greatly simplifies the  analysis of mean ergodicity. The  following is an easy consequence of Theorems \ref{basic:me} and  \ref{lin74t}.
\begin{theorem}
\label{normal}
Let $\h$ be a Hilbert space and let $T\in \ll(\h)$ be a normal operator. Then
\begin{enumerate}
  \item The operator $T$  is  mean ergodic if and only if it is weakly mean ergodic if and only if  $\|T\|\leq 1$.
  \item The  operator  $T$ is uniformly mean ergodic if and only if $\|T\|\leq 1$ and $1$ is not an accumulation point of $\sigma(T)$.
\end{enumerate}
\end{theorem}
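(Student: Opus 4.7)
The plan is to exploit the identity $r(T)=\|T\|$ that holds for any normal operator on a Hilbert space; this identity allows us to translate between the spectral radius (which governs ergodic behaviour by Corollary~\ref{lem:as} and Theorem~\ref{lin74t}) and the operator norm (which gives power boundedness). Once that bridge is in place, the statement becomes a fairly direct application of the general machinery already collected in Section~2.

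For part (1), the easy direction is that mean ergodicity trivially implies weak mean ergodicity. If $T$ is weakly mean ergodic, Corollary~\ref{lem:as} gives $r(T)\leq 1$, hence $\|T\|=r(T)\leq 1$ by normality. Conversely, assume $\|T\|\leq 1$. Then $T$ is power bounded, so $(T_{[n]})_n$ is bounded and $\|T^n\|/n\to 0$. Since $\h$ is reflexive, Corollary~\ref{MEth} gives that $T$ is mean ergodic. This closes the circle of implications.

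For part (2), assume first that $T$ is uniformly mean ergodic. By part (1) we have $\|T\|\leq 1$, and Theorem~\ref{lin74t}(3) tells us that either $1\in\varrho(T)$ or $1$ is a pole of the resolvent; in either case $1$ is isolated in $\sigma(T)$. For the converse, assume $\|T\|\leq 1$ and $1$ is not an accumulation point of $\sigma(T)$. Power boundedness gives $\|T^n\|/n\to 0$, so by Theorem~\ref{lin74t} it suffices to check that $\range(I-T)$ is closed. This is where the spectral theorem for normal operators comes in: if $1\notin\sigma(T)$, then $I-T$ is invertible and $\range(I-T)=\h$; otherwise $\{1\}$ is an isolated point of $\sigma(T)$, and the associated spectral projection $E(\{1\})$ is the orthogonal projection onto $\Fix{T}=\ker(I-T)$. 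Writing $\h=\Fix{T}\oplus \Fix{T}^\perp$ with respect to this decomposition, $T$ restricts to a normal operator on $\Fix{T}^\perp$ whose spectrum is $\sigma(T)\setminus\{1\}$, a set bounded away from $1$; consequently $(I-T)|_{\Fix{T}^\perp}$ is invertible, and therefore $\range(I-T)=\Fix{T}^\perp$ is closed.

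The only step that is not entirely formal is the use of the spectral theorem to obtain the orthogonal decomposition $\h=\Fix{T}\oplus\Fix{T}^\perp$ with $\range(I-T)=\Fix{T}^\perp$ when $1$ is an isolated point of $\sigma(T)$; this is the main point where normality is essential, and once it is in hand, everything else reduces to invoking Theorem~\ref{lin74t}. (Alternatively one could directly verify that $1$ is a simple pole of the resolvent using the continuous functional calculus and then apply Theorem~\ref{lin74t}(3), but the spectral-projection argument is more transparent.)
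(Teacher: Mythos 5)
Your proof is correct and follows essentially the same route as the paper: part (1) via $r(T)=\|T\|$ together with Corollary~\ref{lem:as} and Corollary~\ref{MEth}, and part (2) via Theorem~\ref{lin74t}, reducing the converse to the closedness of $\range(I-T)$. The only difference is cosmetic: where the paper cites the standard fact that for a normal operator $\range(T-\lambda I)$ is closed precisely when $\lambda$ is not an accumulation point of $\sigma(T)$ (Conway, Proposition 4.5 of Chapter XI), you rederive it from the spectral theorem using the spectral projection $E(\{1\})$, which is a valid self-contained substitute.
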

\begin{proof}
Since $\|T\|=r(T)$, corollaries  \ref{MEth} and \ref{lem:as} imply that $T$  is  (weakly) mean ergodic if and only if $\norm{T}\leq 1$.

If $T$ is uniformly mean ergodic it follows from (1) implies (3) on Theorem \ref{lin74t} that 1 cannot be an accumulation point of $\sigma(T)$.

For the converse we only have to recall that for a normal operator $T$ and $\lambda\in \C$,  $\range(T-\lambda I)$ is closed if and only if $\lambda$ is not an accumulation point of $\sigma(T)$, see, e.g., \cite[Proposition 4.5 of Chapter XI]{conway}.  Hence $T$ is uniformly mean ergodic if  $1$ is not an accumulation point of $\sigma(T)$, by  (4) implies  (1)  of Theorem \ref{lin74t}.
\end{proof}

\begin{corollary}
\label{cuadrados}
Let $\h$ be a Hilbert space and let $T\in \ll(\h)$ be a normal operator. Then $T$ is uniformly mean ergodic whenever $T^2$ is.
\end{corollary}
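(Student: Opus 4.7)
The plan is to invoke Theorem \ref{normal} in both directions: use the characterization of uniform mean ergodicity for $T^2$ to extract the corresponding information about $T$, and then feed that information back into Theorem \ref{normal} to conclude. Throughout I will use that normality of $T$ propagates to $T^2$: since $T$ and $T^*$ commute, $(T^2)^*T^2 = (T^*)^2 T^2 = T^*TT^*T = TT^*TT^* = T^2(T^2)^*$, so Theorem \ref{normal} applies to $T^2$ as well.

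First I would handle the norm condition. By Theorem \ref{normal} applied to $T^2$, the assumption gives $\|T^2\|\le 1$. For a normal operator $T$ one has $\|T\|=r(T)$, and the spectral radius formula combined with $\sigma(T^2)=\{\lambda^2:\lambda\in\sigma(T)\}$ (spectral mapping) yields $r(T^2)=r(T)^2$. Hence
\[
\|T\|^2 = r(T)^2 = r(T^2) = \|T^2\| \le 1,
\]
so $\|T\|\le 1$.

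Next I would transfer the spectral condition from $T^2$ to $T$. Suppose, for contradiction, that $1$ is an accumulation point of $\sigma(T)$. Then there is a sequence $(\lambda_n)$ in $\sigma(T)\setminus\{1\}$ with $\lambda_n\to 1$. By the spectral mapping theorem $\lambda_n^2\in\sigma(T^2)$, and clearly $\lambda_n^2\to 1$. Moreover, since $\lambda_n\to 1$ we eventually have $\lambda_n\ne -1$, so that $\lambda_n^2=1$ would force $\lambda_n=1$, which is excluded; hence $\lambda_n^2\ne 1$ for all sufficiently large $n$. Thus $1$ is an accumulation point of $\sigma(T^2)$, contradicting the hypothesis (via Theorem \ref{normal} applied to $T^2$). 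Therefore $1$ is not an accumulation point of $\sigma(T)$.

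Combining the two conclusions, $\|T\|\le 1$ and $1$ is not an accumulation point of $\sigma(T)$. Applying Theorem \ref{normal}(2) once more, now to $T$, we conclude that $T$ is uniformly mean ergodic. The only genuinely non-routine point is the spectral transfer step, where one needs to ensure that the sequence of squares $\lambda_n^2$ does not artificially equal $1$; this is handled by the elementary observation that $\lambda_n\to 1$ rules out $\lambda_n=-1$ for large $n$.
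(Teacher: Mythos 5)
Your proof is correct and follows essentially the same route as the paper: both rest on Theorem \ref{normal}(2) together with the spectral mapping identity $\sigma(T^2)=\sigma(T)^2$, the paper phrasing it contrapositively and you directly. If anything, your version is slightly more complete, since you explicitly handle the norm condition via $\|T\|^2=r(T)^2=r(T^2)=\|T^2\|$ and rule out $\lambda_n=-1$ in the spectral transfer, points the paper's terse argument leaves implicit.
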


\begin{proof}
If $T$ is normal also $T^2$ is. Assume that $T$ is not uniformly mean ergodic. Then  $1$ is an accumulation point of $\sigma(T)$, and therefore also is an accumulation point of
$\sigma(T^2)=\sigma(T)^2$. Therefore $T^2$ is not uniformly mean ergodic.
\end{proof}

We will see in Example \ref{ex:radnonp} that the converse of Corollary \ref{cuadrados} is not true.
\subsection{Mean ergodicity of normal convolution operators}

We can  now completely characterize the mean ergodicity of $\lambda_2(\mu)$ when $\mu$ is operator-normal. This provides  a complete characterization of the mean ergodicity of $\lambda_2(\mu)$ when $G$ es abelian and $\mu\in M(G)$.

\begin{theorem}

\label{hilbert}
Let $G$ be a locally compact group $G$ and let  $\mu$ be an operator-normal measure on $G$. Then the following assertions are equivalent:
\begin{enumerate}
\item The operator $\lambda_2(\mu)$ is mean ergodic.
\item The operator $\lambda_2(\mu)$ is weakly mean ergodic.    \item $r(\lambda_2(\mu))=\norm{\lambda_2(\mu)}\leq 1$.
       \item The operator $\lambda_2(\mu)$ is power bounded.
\item The operator $\lambda_2(\mu)$ is Ces\`aro bounded.
           \end{enumerate}
           \end{theorem}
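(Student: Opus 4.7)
The plan is to exploit the fact that when $\mu$ is operator-normal, $\lambda_2(\mu)$ is a normal operator on the Hilbert space $L^2(G)$, as noted right after the definition of operator-normal. For normal operators we have the crucial identity $r(T)=\norm{T}$, which collapses most of the implications to nearly trivial ones. Much of the work has already been done in Theorem \ref{normal}, which handles the hardest equivalence (1)$\Leftrightarrow$(2)$\Leftrightarrow$(3) for arbitrary normal operators on Hilbert spaces.

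Concretely, I would organize the proof as a cycle. First, I would immediately invoke Theorem \ref{normal}(1), which gives (1)$\Leftrightarrow$(2)$\Leftrightarrow$ $\norm{\lambda_2(\mu)}\leq 1$; combined with normality yielding $r(\lambda_2(\mu))=\norm{\lambda_2(\mu)}$, this is precisely (3). So it only remains to weave (4) and (5) into the cycle. The implication (3)$\Rightarrow$(4) is immediate since $\norm{\lambda_2(\mu)}\leq 1$ gives $\norm{\lambda_2(\mu)^n}\leq 1$ for every $n$. The implication (4)$\Rightarrow$(5) is equally immediate because power boundedness by $M$ gives
\[\bnorm{\lambda_2(\mu)_{[n]}}\leq \frac{1}{n}\sum_{k=1}^n\bnorm{\lambda_2(\mu)^k}\leq M.\]

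The remaining step is (5)$\Rightarrow$(3), and this is where normality does the essential work: by Proposition \ref{cbradio}, Ces\`aro boundedness forces $r(\lambda_2(\mu))\leq 1$, and then normality promotes this to $\norm{\lambda_2(\mu)}=r(\lambda_2(\mu))\leq 1$. At that point the cycle (1)$\Rightarrow$(2)$\Rightarrow$(5)$\Rightarrow$(3)$\Rightarrow$(4)$\Rightarrow$(1) closes, using Theorem \ref{normal} for the step (3)$\Rightarrow$(1) and the trivial fact that strong convergence implies weak convergence for (1)$\Rightarrow$(2), with Banach--Steinhaus (or just the pointwise boundedness implicit in weak convergence) giving (2)$\Rightarrow$(5).

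I do not anticipate a real obstacle: every ingredient is already on the table. The only subtlety is ensuring that the normality hypothesis is invoked in the two places where it is needed, namely to apply Theorem \ref{normal} for the Hilbert-space equivalence of mean ergodicity with $\norm{T}\leq 1$, and to turn the bound $r(T)\leq 1$ coming from Ces\`aro boundedness into the norm bound $\norm{T}\leq 1$. Everything else is formal.
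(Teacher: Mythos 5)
Your proposal is correct and follows essentially the same route as the paper: equivalence of (1)--(3) via Theorem \ref{normal}, the trivial chain (3)$\Rightarrow$(4)$\Rightarrow$(5), and (5)$\Rightarrow$(3) by combining Proposition \ref{cbradio} with the identity $r(\lambda_2(\mu))=\norm{\lambda_2(\mu)}$ for normal operators. No meaningful differences.
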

           \begin{proof}
             The equivalence of Statements (1) to (3) follows at once from Theorem \ref{normal}.   It is obvious that (3) implies (4) and (4) implies (5).  Proposition \ref{cbradio} and the normality of $\lambda_2(\mu)$  show that  Statement (5) implies Statement (3).\end{proof}
           \begin{remarks}\label{rks}
          The assumptions of    Theorem \ref{hilbert} cannot completely removed and its conclusions cannot be easily   strengthened.
           \begin{enumerate}
             \item Example 6.24 of \cite{schr70} shows that every nondiscrete      locally compact Abelian group contains a measure $\mu\in M(G)$ with $\norm{\mu^n}\geq 2^n$ and $r(\lambda_2(\mu))=\norm{\lambda_2(\mu)}=
             \norm{\widehat{\mu}}_\infty<1$.
             The operator $\lambda_2(\mu)$ is mean ergodic but the  measure $\mu$ is not Ces\`aro bounded, let alone vague-ergodic. After Theorem \ref{allp}, one deduces that, at least for operator-normal measures, vague-ergodicity is strictly stronger than weak mean ergodicity of the convolution operator.
\item Both the normality condition $\mu^\ast \ast \mu=\mu \ast \mu^\ast$ and the restriction to the Hilbert case $p=2$ can be removed if $\mu\geq 0$  and $H_\mu$ is amenable, see Theorem \ref{pb=me:amen} \emph{infra}. Example \ref{ex:free} shows that  Theorem \ref{hilbert} is no longer true  when  $\mu$ is not operator-normal and $H_\mu$ is not amenable, even if $\mu$ is positive.
    \item If we keep the condition $\mu^\ast \ast \mu=\mu \ast \mu^\ast$ but consider $p\neq 2$, the result also fails, as witnessed by the following example.
           \end{enumerate}
           \end{remarks}
        \begin{example}
        Let $p>2$. On $G=\Z_3$, the cyclic group of order 3, there is a measure $\mu\in M(G)$  such that  $\norm{\lambda_p(\mu)}>1$ but $\lambda_p(\mu)$ is uniformly mean ergodic.
        \end{example}\begin{proof}
         If $G=\{e,x,x^2\}$, let $\mu= (\delta_x-\delta_{x^2})$.  An elementary computation yields that, for any $p$. $r(\lambda_p(\mu))=\norm{\lambda_2(\mu)}
         =\sqrt{3}$.
         Computing $\norm{\lambda_p(\mu)f}_p$ with
         $f=6^{-1/p}\delta_e+
         6^{-1/p}\delta_x-(3/2)^{-1/p}\delta_{x^2}$, one sees that, for every $p$,
         \[\norm{\lambda_p(\mu)}^p\geq \frac{1}{3}\left(1+4^{\frac{1}{p}}\right)^p.\]
    It is easy to check then that for every $p>2$,
      $\norm{\lambda_p(\mu)}>\sqrt{3}$.
      Pick now $t$ with $\norm{\lambda_p(\mu)}>t>\sqrt{3}$. The measure $\mu_t=\frac{1}{t}\mu$ is then the desired measure with $\norm{\lambda_p(\mu_t)}>1$ but $r(\lambda_p(\mu_t))<1$.
              \end{proof}

%
%
\subsection{Uniform mean ergodicity of normal convolution operators}
Turning to uniform mean ergodicity the direct consequence of Theorem \ref{normal} is the following.
           \begin{theorem}
\label{hilbert:ume}
Let $G$ be a locally compact group $G$ and let  $\mu$ be an operator-normal measure on $G$. Then the following assertions are equivalent:
\begin{enumerate}
\item           The  operator $\lambda_2(\mu)$ is  uniformly mean ergodic.
  \item  $\norm{\lambda_2(\mu)}=r(\lambda_2(\mu))\leq 1$ and $1$ is not an accumulation point of  $\sigma(\lambda_2(\mu))$.
\end{enumerate}
\end{theorem}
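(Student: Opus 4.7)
The plan is to reduce this immediately to the already-proven Hilbert space result, Theorem \ref{normal}(2). The key observation is that operator-normality of $\mu$, i.e.\ $\mu \ast \mu^\ast = \mu^\ast \ast \mu$, together with the fact (recalled in Section \ref{sec:pre}) that $\lambda_2(\mu)^\ast = \lambda_2(\mu^\ast)$, means exactly that $\lambda_2(\mu)$ is a normal bounded operator on the Hilbert space $L^2(G)$. Once this is in place, Theorem \ref{normal}(2) applied with $T = \lambda_2(\mu)$ delivers the equivalence between (1) and (2) directly, provided we clarify that the norm-spectral-radius equality in (2) is automatic rather than an extra hypothesis.

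More concretely, I would proceed as follows. First, I would note that for any normal operator $T$ on a Hilbert space one has $\norm{T} = r(T)$, so when $\lambda_2(\mu)$ is normal the statement ``$\norm{\lambda_2(\mu)} = r(\lambda_2(\mu)) \leq 1$'' reduces to either of the equivalent conditions $\norm{\lambda_2(\mu)} \leq 1$ or $r(\lambda_2(\mu)) \leq 1$. For the implication (1) $\Rightarrow$ (2): if $\lambda_2(\mu)$ is uniformly mean ergodic, then in particular it is mean ergodic, so Theorem \ref{normal}(1) gives $\norm{\lambda_2(\mu)} \leq 1$; and Theorem \ref{normal}(2) gives that $1$ is not an accumulation point of $\sigma(\lambda_2(\mu))$. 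For (2) $\Rightarrow$ (1): the hypothesis supplies exactly the two conditions required by the ``if'' direction of Theorem \ref{normal}(2), yielding uniform mean ergodicity.

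There is really no obstacle here, since all the analytic content has been encapsulated in Theorem \ref{normal}, which in turn rests on Theorem \ref{lin74t} and the standard fact that $\range(T - \lambda I)$ is closed for normal $T$ precisely when $\lambda$ is not an accumulation point of $\sigma(T)$. The only thing worth being careful about is the bookkeeping between the conditions $\norm{T} \leq 1$, $r(T) \leq 1$, and power boundedness for normal $T$, but these all coincide thanks to $\norm{T} = r(T)$. Accordingly, I expect the proof to be a single short paragraph invoking Theorem \ref{normal}(2), with a parenthetical remark that $\norm{\lambda_2(\mu)} = r(\lambda_2(\mu))$ follows from normality.
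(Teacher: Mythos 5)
Your proposal is correct and matches the paper's approach: the paper states Theorem \ref{hilbert:ume} as a direct consequence of Theorem \ref{normal}(2), using exactly the observation that operator-normality of $\mu$ makes $\lambda_2(\mu)$ a normal operator on $L^2(G)$ (via $\lambda_2(\mu)^\ast=\lambda_2(\mu^\ast)$), with $\norm{\lambda_2(\mu)}=r(\lambda_2(\mu))$ automatic. Nothing further is needed.
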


We next extend the preceding Theorem to the case $p\neq 2$. This will follow from Proposition \ref{paracadap}    which ensures that if uniform mean ergodicity of $\lambda_q(\mu)$ is assumed for  some $q$, then    condition (3) in Theorem \ref{lin74t} can be relaxed as in Theorem \ref{normal}  to characterize uniform mean ergodicity for arbitrary $p$.
%
%
We first need a known result on extension of vector-valued holomorphic functions.

\begin{definition} Let $X$ be a Banach space. A subset $H\subseteq X^*$ is said to  be {\em separating} in $X$ when $x^*(x)=0$ for all $x^*\in H$ implies $x=0$.
\end{definition}

\begin{proposition}{\cite[Theorem 1]{GE2004},\cite[Corollary 10, Remark 11]{BFJ2007}}
\label{holomorphicextension}
Let $X$ be a Banach space, $H$  a separating subspace of $X^*$, $\Omega\subseteq \mathbb{C}$  a domain,  $a\in \Omega$. Let   $f :\Omega\setminus \{a\}:\to X$ be a holomorphic function such that $x^*\circ f$ admits holomorphic extension to $\Omega$ for each $x^*\in H$. Then $f$ admits a (unique) holomorphic extension to $\Omega$ .
\end{proposition}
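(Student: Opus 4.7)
The plan is to reduce the vector-valued extension problem to the scalar case via a Laurent expansion around the isolated point $a$. Since $\Omega$ is open and $a\in\Omega$, fix $r>0$ with $\overline{D(a,r)}\subset\Omega$. On the punctured disk $D(a,r)\setminus\{a\}\subset \Omega\setminus\{a\}$ the $X$-valued holomorphic function $f$ admits a norm-convergent Laurent expansion
\[ f(z)=\sum_{n\in\Z} a_n(z-a)^n,\qquad a_n\in X, \]
with coefficients given by the usual contour-integral formula $a_n=\frac{1}{2\pi i}\int_\gamma f(\zeta)/(\zeta-a)^{n+1}\,d\zeta$ for a small circle $\gamma$ around $a$. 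This is a standard fact for Banach-space-valued holomorphic functions.

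The key step is to argue that the hypothesis on $H$ forces all negative Laurent coefficients to vanish. For each $x^\ast\in H$, continuity of $x^\ast$ allows it to pass inside the defining contour integral, so the scalar Laurent coefficients of $x^\ast\circ f$ on $D(a,r)\setminus\{a\}$ are precisely $x^\ast(a_n)$. By hypothesis $x^\ast\circ f$ extends holomorphically to $\Omega$, hence its expansion around $a$ is a Taylor series with no negative terms. Uniqueness of the scalar Laurent expansion then yields $x^\ast(a_n)=0$ for every $n<0$. Since this holds for every $x^\ast\in H$ and $H$ is separating, $a_n=0$ for all $n<0$.

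Consequently on $D(a,r)\setminus\{a\}$ we have $f(z)=\sum_{n\geq 0} a_n(z-a)^n$; this power series has radius of convergence at least $r$, so it defines a holomorphic function $\widetilde{f}\colon D(a,r)\to X$ coinciding with $f$ off $a$. Gluing $\widetilde{f}$ with $f$ on $\Omega\setminus\{a\}$ produces the desired holomorphic extension to $\Omega$, and uniqueness is immediate since any two extensions differ by an $X$-valued holomorphic function on $\Omega$ vanishing on the dense set $\Omega\setminus\{a\}$. The only step requiring genuine care is the availability of the vector-valued Laurent expansion together with the commutation of $x^\ast$ with the contour integral defining $a_n$; once these are in hand, the separating hypothesis dispatches the rest of the argument coefficient-by-coefficient.
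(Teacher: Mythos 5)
Your argument is correct. The paper itself does not prove this proposition; it simply cites it from Grosse-Erdmann \cite{GE2004} and Bonet--Frerick--Jord\'a \cite{BFJ2007}, whose theorems are substantially more general (they handle functions that are only weakly holomorphic or locally bounded with respect to separating or even merely ``determining'' subsets of $X^*$, and extension from weaker data). For the statement as formulated here, where $f$ is genuinely $X$-valued and holomorphic on $\Omega\setminus\{a\}$ and the singularity is an isolated point, your elementary route works: the vector-valued Laurent expansion on a punctured disk around $a$ is available with coefficients given by contour integrals, a bounded functional $x^*$ passes through the integral (it commutes with the limits of Riemann sums), uniqueness of the scalar Laurent expansion of the extendable function $x^*\circ f$ gives $x^*(a_n)=0$ for $n<0$, and the separating hypothesis kills all principal-part coefficients; the Cauchy estimates $\norm{a_n}\leq \sup_{\absolute{\zeta-a}=\rho}\norm{f(\zeta)}\,\rho^{-n}$ for $0<\rho<r$ show the regular part converges on the full disk, so the singularity is removable, and uniqueness follows from continuity and density of $\Omega\setminus\{a\}$. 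What your proof buys is self-containedness and transparency for exactly the situation needed in Proposition \ref{paracadap}; what the cited results buy is the stronger statement in which one need not assume in advance that $f$ is holomorphic (or even locally bounded) as an $X$-valued map, which is the genuinely delicate part of the weak-to-strong theory and is not needed here.
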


\begin{proposition}
\label{paracadap}
Let $G$ be a locally compact group and let $\mu\in M(G)$. Assume there is $q>1$ such that $\lambda^q(\mu)$ is uniformly mean ergodic. Then, for each $1\leq p\leq \infty$, $\lambda_p(\mu)$ is uniformly mean ergodic if and only if  $\lim\|\lambda_p(\mu^{n})\|/n=0$ and $1$ is not an accumulation point of $\sigma(\lambda_p(\mu))$.
\end{proposition}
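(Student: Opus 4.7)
The necessity of both conditions is immediate from Theorem \ref{lin74t}(3): uniform mean ergodicity forces $\lim_n \|\lambda_p(\mu^n)\|/n=0$ and places $1$ either in $\varrho(\lambda_p(\mu))$ or as a simple pole of $R(\cdot,\lambda_p(\mu))$, so $1$ is never an accumulation point of $\sigma(\lambda_p(\mu))$. For sufficiency the only non-trivial case is $1\in\sigma(\lambda_p(\mu))$, and the hypothesis then gives a punctured disc $U\setminus\{1\}\subseteq\varrho(\lambda_p(\mu))$. By Theorem \ref{lin74t}(3) it suffices to show that the operator-valued holomorphic function $F_p(z):=(z-1)R(z,\lambda_p(\mu))\colon U\setminus\{1\}\to\mathcal{L}(L^p(G))$ extends holomorphically across $1$.

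To accomplish this extension I would invoke Proposition \ref{holomorphicextension} with $X=\mathcal{L}(L^p(G))$ and $H\subseteq X^*$ the linear span of the evaluation functionals $\phi_{f,g}(T):=\langle Tf,g\rangle$, $f,g\in C_{00}(G)$. The uniform mean ergodicity of $\lambda_q(\mu)$, via Theorem \ref{lin74t}(3) again, ensures that $(z-1)R(z,\lambda_q(\mu))$ extends holomorphically to a neighborhood of $1$; after shrinking $U$ I may assume this extension is holomorphic on $U$ and that $U\setminus\{1\}\subseteq\varrho(\lambda_q(\mu))$. The heart of the argument is the scalar identity
\[\langle R(z,\lambda_p(\mu))f,g\rangle=\langle R(z,\lambda_q(\mu))f,g\rangle\qquad(z\in U\setminus\{1\},\ f,g\in C_{00}(G)),\]
which I would establish as follows: since both spectral radii are bounded by $1$ (using $\lim\|\lambda_p(\mu^n)\|/n=0$ for $p$ and Theorem \ref{lin74t}(3) for $q$), the real ray $(1,\infty)$ lies in both resolvent sets, and on its intersection with $U\setminus\{1\}$ both sides coincide with the common Neumann sum $\sum_{n\geq 0}\langle\mu^n*f,g\rangle/z^{n+1}$, since $\mu^n*f$ is the same pointwise function in $L^p(G)$ and in $L^q(G)$. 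The identity principle on the connected set $U\setminus\{1\}$ then propagates the equality.

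Multiplying by $z-1$ and exploiting the holomorphic extension of $(z-1)\langle R(z,\lambda_q(\mu))f,g\rangle$ to $U$, I obtain a holomorphic extension of $\phi_{f,g}\circ F_p$ to $U$. Proposition \ref{holomorphicextension} then furnishes a holomorphic extension of $F_p$ itself to $U$, which means $1$ is a pole of order at most $1$ of $R(\cdot,\lambda_p(\mu))$; combined with $\lim\|\lambda_p(\mu^n)\|/n=0$, Theorem \ref{lin74t}(3) delivers uniform mean ergodicity. The separation property of $H$ is clear from the density of $C_{00}(G)$ in $L^p(G)$ and in $L^{p'}(G)$ when $1\leq p<\infty$; the case $p=\infty$ needs extra care, e.g.\ through the adjoint relation $\lambda_\infty(\mu)=\lambda_1(\check{\mu})^*$, which allows a transfer of uniform mean ergodicity under duality. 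The main obstacle I anticipate is the scalar-agreement step: concretely, verifying that the punctured neighborhood of $1$ really connects to the Neumann convergence region inside $\varrho(\lambda_p(\mu))\cap\varrho(\lambda_q(\mu))$—a point that rests entirely on the spectral-radius bounds $r(\lambda_p(\mu)),r(\lambda_q(\mu))\leq 1$.
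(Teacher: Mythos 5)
Your proposal is correct and follows essentially the same route as the paper: a punctured disc around $1$ in the resolvent set, the scalar functions $z\mapsto\langle (z-1)R(z,\lambda_q(\mu))f,g\rangle$ extended across $1$ via Theorem \ref{lin74t}, the separating family $I_{f,g}$ with $f,g\in C_{00}(G)$, and Proposition \ref{holomorphicextension} to extend $(z-1)R(z,\lambda_p(\mu))$, concluding by (3)$\Rightarrow$(1) of Theorem \ref{lin74t}. In fact you supply two details the paper leaves implicit, namely the Neumann-series/identity-principle justification that $R(z,\lambda_p(\mu))f=R(z,\lambda_q(\mu))f$ on $C_{00}(G)$, and the caveat (handled by duality with $\lambda_1$) that the family $I_{f,g}$ is not obviously separating when $p=\infty$.
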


\begin{proof}
 The necessity follows from (1) implies (3) in Theorem \ref{lin74t}. We prove the converse. Let $r>0$ such that $(B(1,r)\setminus \{1\})\cap \sigma(\lambda_p(\mu))=\emptyset$ and $(B(1,r) \setminus \{1\})\cap \sigma(\lambda_q(\mu))=\emptyset$.  The  resolvent mapping restricted to $B(1,r)\setminus\{1\}$
\[R(\cdot,\lambda_p(\mu) )\colon B(1,r)\setminus \{1\} \to \ll(L^p(G))\]
  is then a holomorphic function. From uniform mean ergodicity in $L_q(G)$ and Theorem \ref{lin74t}   we get that, for each $f,g\in C_{00}(G)$, the function
\[z\mapsto \<{ (z-1)R(z,\lambda^q(\mu)(f)),g}, \]
 $z\in B(1,r)\setminus\{1\}$, is holomorphic and admits a holomorphic extension to 1. For $f,g\in C_{00}(G)$ define $I_{f,g}\in \ll(L_q(G))^*$ by $I_{f,g}(T)=\<{T(f),g}$.
 Observe that $\{I_{f,g}: \ f,g\in C_{00}(G)\}$ is a separating subset of $\ll(L_p(G))^*$. We conclude from $R(z,\lambda^q(\mu)(f)=R(z,\lambda_p(\mu)(f))$  for each $f\in C_{00}(G)$ that the function  $B(1,r)\setminus\{1\}\to \C$, $z\mapsto\<{ (z-1)R(z,\lambda_p(\mu))(f),g} $ admits a holomorphic extension to 1 for each $f,g\in C_{00}(G)$. The conclusion follows from Proposition \ref{holomorphicextension} and (3) implies (1)  in Theorem \ref{lin74t}.
\end{proof}

\begin{theorem}
\label{compactume}
Let $G$ be a locally compact group and let $\mu$ be an operator-normal  measure. Assume also that $H_\mu$ is amenable.
The following assertions are equivalent for $1\leq p\leq \infty$.
\begin{enumerate}
\item  $\lambda_p(\mu)$ is uniformly mean  ergodic.
\item  1 is not an accumulation point of $\sigma(\lambda_p(\mu))$, \emph{and}  $ \lim_n \norm{\lambda_p(\mu^{n})}/n=0$.
    \end{enumerate}

\end{theorem}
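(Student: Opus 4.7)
The plan is to reduce to the Hilbert-space case $p=2$, where operator-normality of $\mu$ makes $\lambda_2(\mu)$ a normal operator so that Theorem \ref{hilbert:ume} applies, and then to propagate uniform mean ergodicity to all other $p$ via Proposition \ref{paracadap} used with $q=2$.

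The direction (1) $\Rightarrow$ (2) will be immediate from the implication (1) $\Rightarrow$ (3) of Theorem \ref{lin74t}: uniform mean ergodicity of $\lambda_p(\mu)$ forces $\lim_n \|\lambda_p(\mu^n)\|/n=0$ and places $1$ either in the resolvent set of $\lambda_p(\mu)$ or as a pole of order $1$ of $R(z,\lambda_p(\mu))$, so in particular $1$ is not an accumulation point of $\sigma(\lambda_p(\mu))$.

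For (2) $\Rightarrow$ (1), I will first establish that $\lambda_2(\mu)$ is uniformly mean ergodic. Because $\mu$ is operator-normal, $\lambda_2(\mu)$ is a normal operator on $L^2(G)$, so by Theorem \ref{hilbert:ume} it suffices to check that $\|\lambda_2(\mu)\|\leq 1$ and that $1$ is not an accumulation point of $\sigma(\lambda_2(\mu))$. Both properties will be read off from condition (2) once the spectral inclusion
\[
\sigma(\lambda_2(\mu))\subseteq \sigma(\lambda_p(\mu))
\]
is in hand. Proposition \ref{lemma6.6} delivers this inclusion directly when $G$ itself is amenable; in the general setting of our hypothesis I would view $\mu$ as an element of $M(H_\mu)$, apply Proposition \ref{lemma6.6} inside the amenable group $H_\mu$ to obtain $\sigma(\lambda_2^{H_\mu}(\mu))\subseteq\sigma(\lambda_p^{H_\mu}(\mu))$, and then transport the inclusion back to $G$ via the identifications between $\sigma(\lambda_r^{H_\mu}(\mu))$ and $\sigma(\lambda_r^G(\mu))$ that are developed in the appendix. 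The hypothesis $\lim_n \|\lambda_p(\mu^n)\|/n=0$ forces $r(\lambda_p(\mu))\leq 1$ through Gelfand's formula (otherwise $\|\lambda_p(\mu)^n\|$ would grow exponentially), and the inclusion then yields $r(\lambda_2(\mu))\leq 1$; normality of $\lambda_2(\mu)$ upgrades this to $\|\lambda_2(\mu)\|\leq 1$. The accumulation-point condition transfers directly through the inclusion as well.

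Once $\lambda_2(\mu)$ is known to be uniformly mean ergodic, Proposition \ref{paracadap} applied with $q=2$ says that for every $1\leq p\leq \infty$ the operator $\lambda_p(\mu)$ is uniformly mean ergodic if and only if $\lim_n \|\lambda_p(\mu^n)\|/n=0$ and $1$ is not an accumulation point of $\sigma(\lambda_p(\mu))$, which is exactly condition (2). The main technical obstacle is the transfer of spectral information between $\lambda_r$ acting on $L^r(G)$ and $\lambda_r$ acting on $L^r(H_\mu)$; once the appendix supplies this transfer, the whole argument becomes a clean two-step reduction --- first localize to the normal Hilbert-space case at $p=2$ with Theorem \ref{hilbert:ume}, then open back up to arbitrary $p$ via Proposition \ref{paracadap}.
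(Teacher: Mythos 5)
Your proposal is correct and follows essentially the same route as the paper: (1)$\Rightarrow$(2) via Theorem \ref{lin74t}, and (2)$\Rightarrow$(1) by using the spectral radius formula together with the inclusion $\sigma(\lambda_2(\mu))\subseteq\sigma(\lambda_p(\mu))$ from Proposition \ref{lemma6.6} to get uniform mean ergodicity of $\lambda_2(\mu)$ from Theorem \ref{hilbert:ume}, and then Proposition \ref{paracadap} with $q=2$. In fact you are slightly more careful than the paper, which invokes Proposition \ref{lemma6.6} without commenting on the reduction from amenability of $H_\mu$ to amenability of the ambient group.
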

\begin{proof}
From $ \lim_n \norm{\lambda_p(\mu^{n})}/n=0$ and the spectral radius formula it follows that $r(\lambda_p(\mu))\leq 1$. Combining Theorem \ref{hilbert:ume}  with Proposition \ref{lemma6.6}   we get that  (2) implies that $\lambda_2(\mu)$ is uniformly mean ergodic.    Proposition \ref{paracadap} then  proves that Statement (2) implies Statement (1). The other direction comes directly from Theorem \ref{lin74t}.
\end{proof}

%

\subsection{Abelian groups}
When $G$ is Abelian, Corollaries \ref{hilbert} and \ref{hilbert:ume} can be rephrased,  using Theorem \ref{FSUE},  in terms of Fourier-Stieltjes transforms.
\begin{corollary}
\label{pb=me+ume:ab}
Let $G$ be a locally compact Abelian group $G$ and let  $\mu\in M(G)$. Then:
\begin{enumerate}
  \item The operator $\lambda_2(\mu)$ is mean ergodic if and only if $\norm{\widehat{\mu}}_\infty\leq 1$.
         \item \label{5} The  operator $\lambda_2(\mu)$ is uniformly mean ergodic if and only if $\norm{\widehat{\mu}}_\infty\leq 1$ and  ${\displaystyle  1\notin \overline{\left\{\widehat{\mu}(\chi)\colon \chi \in \widehat{G}, \;\widehat{\mu}(\chi)\neq 1\right\}}}$.
\end{enumerate}
\end{corollary}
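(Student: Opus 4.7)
The plan is to deduce both items directly from Theorem \ref{hilbert} and Theorem \ref{hilbert:ume}, which characterize (uniform) mean ergodicity of $\lambda_2(\mu)$ for operator‐normal $\mu$, combined with the unitary equivalence $\lambda_2(\mu)\simeq M_{\widehat{\mu}}$ on $L^2(\widehat{G})$ provided by Theorem \ref{FSUE}. Since $G$ is Abelian, every $\mu\in M(G)$ is operator‐normal (as recalled in the preliminaries), so these theorems apply unconditionally.

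For (1) the argument is immediate: by Theorem \ref{hilbert}, $\lambda_2(\mu)$ is mean ergodic if and only if $\|\lambda_2(\mu)\|\leq 1$, and by Theorem \ref{FSUE} the latter norm equals $\|\widehat{\mu}\|_\infty$. This disposes of the first item in one line.

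For (2) I would combine Theorem \ref{hilbert:ume} with a description of $\sigma(\lambda_2(\mu))$ in terms of $\widehat{\mu}$. The unitary equivalence of Theorem \ref{FSUE} gives $\sigma(\lambda_2(\mu))=\sigma(M_{\widehat{\mu}})$. The key computational step is the identity
\[
\sigma(M_{\widehat{\mu}})=\overline{\widehat{\mu}(\widehat{G})},
\]
which I would justify as follows: the spectrum of a multiplication operator $M_\phi$ on $L^2(\widehat{G})$ is the essential range of $\phi$; since $\widehat{\mu}$ is continuous (in fact uniformly continuous) and the Haar measure on $\widehat{G}$ has full support, the essential range of $\widehat{\mu}$ coincides with the topological closure of its image. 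Granted this, $1$ is an accumulation point of $\sigma(\lambda_2(\mu))$ precisely when every neighborhood of $1$ meets $\overline{\widehat{\mu}(\widehat{G})}\setminus\{1\}$, which in turn is equivalent to the existence of a net $(\chi_\alpha)\subseteq\widehat{G}$ with $\widehat{\mu}(\chi_\alpha)\to 1$ and $\widehat{\mu}(\chi_\alpha)\neq 1$ for all $\alpha$, i.e., to
\[
1\in\overline{\{\widehat{\mu}(\chi)\colon \chi\in\widehat{G},\ \widehat{\mu}(\chi)\neq 1\}}.
\]
Finally, Theorem \ref{hilbert:ume} translates uniform mean ergodicity of $\lambda_2(\mu)$ into the conjunction of $\|\lambda_2(\mu)\|\leq 1$ (rewritten as $\|\widehat{\mu}\|_\infty\leq 1$) and the \emph{negation} of the accumulation condition just identified, which is exactly (2).

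The one point that requires a word of justification, and which I regard as the only real step, is the spectral identity $\sigma(M_{\widehat{\mu}})=\overline{\widehat{\mu}(\widehat{G})}$; everything else is a transcription of the Hilbert‐space characterizations through the Fourier--Stieltjes transform. This identity is standard for multiplication by a continuous bounded function against a measure of full support, and I would simply invoke it.
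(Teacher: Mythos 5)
Your argument is correct and is exactly the route the paper takes: the corollary is stated there as a direct rephrasing of Theorems \ref{hilbert} and \ref{hilbert:ume} through the unitary equivalence of Theorem \ref{FSUE}, with the same identification $\sigma(\lambda_2(\mu))=\overline{\widehat{\mu}(\widehat{G})}$ (essential range of the continuous symbol against the fully supported Haar measure on $\widehat{G}$) that the paper itself uses later. Your translation of ``$1$ is not an accumulation point of the spectrum'' into the stated condition on $\widehat{\mu}$ is also the intended one, so nothing is missing.
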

There is an obvious relation between mean ergodicity of $\lambda_2(\mu)$ and how $A_\mu$   embeds topologically in $\widehat{G}$. Recall, Section \ref{sec:2.3}, that $A_\mu=\widehat{\mu}^{-1}(\{1\})$ and hence that $A_\mu $ is always a closed set. We now clarify this.
\begin{corollary}\label{cor:ume:ab} Let $G$ be a locally compact abelian group and let $\mu\in M(G)$.  If $\lambda_2(\mu)$ is uniformly mean ergodic, then $A_\mu$ is a closed and open subset of $\widehat{G}$.
\end{corollary}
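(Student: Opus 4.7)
My plan is to deduce this quickly from Corollary \ref{pb=me+ume:ab}(2) together with the continuity of the Fourier--Stieltjes transform. The key observation is that $\widehat{\mu}\colon \widehat{G}\to \C$ is (uniformly) continuous, so $A_\mu = \widehat{\mu}^{-1}(\{1\})$ is automatically closed as the preimage of a closed set. The content of the corollary is therefore the openness of $A_\mu$.

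To establish openness, I would argue as follows. Since $\lambda_2(\mu)$ is uniformly mean ergodic, Corollary \ref{pb=me+ume:ab}(\ref{5}) guarantees that $1$ is not in the closure of $\{\widehat{\mu}(\chi) : \chi\in \widehat{G},\; \widehat{\mu}(\chi)\neq 1\}$. Hence there exists $\varepsilon>0$ such that
\[
|\widehat{\mu}(\chi)-1|\geq \varepsilon \quad \text{whenever } \widehat{\mu}(\chi)\neq 1.
\]
Now pick any $\chi_0\in A_\mu$, so $\widehat{\mu}(\chi_0)=1$. By continuity of $\widehat{\mu}$ at $\chi_0$, there is an open neighbourhood $U$ of $\chi_0$ in $\widehat{G}$ on which $|\widehat{\mu}(\chi)-1|<\varepsilon$. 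The previous inequality then forces $\widehat{\mu}(\chi)=1$ for every $\chi\in U$, i.e.\ $U\subseteq A_\mu$. Thus $A_\mu$ is open.

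There isn't really a hard step here; the argument is essentially a one-line consequence of the spectral characterization already established. The only subtlety worth flagging explicitly is that Theorem \ref{FSUE}(1) identifies the spectrum of $\lambda_2(\mu)$ with the essential range of $\widehat{\mu}$, so the ``$1$ is not an accumulation point of $\sigma(\lambda_2(\mu))$'' clause in Theorem \ref{hilbert:ume} translates, once $\widehat{\mu}$ is continuous, into the separation statement above used on the actual range. This is precisely the content used in part (\ref{5}) of Corollary \ref{pb=me+ume:ab}, so once that corollary is in hand the argument is immediate.
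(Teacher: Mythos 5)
Your proof is correct and follows essentially the same route as the paper: both deduce from Corollary \ref{pb=me+ume:ab}(2) a uniform separation $|\widehat{\mu}(\chi)-1|\geq\varepsilon$ off $A_\mu$, and then use continuity of $\widehat{\mu}$ to conclude that $A_\mu=\widehat{\mu}^{-1}\bigl(B(1,\varepsilon)\bigr)$ is open (closedness being automatic, as the paper also notes). No gaps.
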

\begin{proof}
If $\lambda_2(\mu)$ is uniformly mean ergodic, we deduce from
\ref{5}  of Corollary \ref{pb=me+ume:ab} that  there is $\delta>0$ such that:
\[ \left|\widehat{\mu}(\chi)-1\right|>\delta \quad \mbox{ for all $\chi \in \widehat{G}$ with $\chi\notin A_\mu$}. \]
We see then that
$\hat{\mu}^{-1}(1-\delta,1+\delta)=A_\mu$ and therefore that $A_\mu$  is open.
\end{proof}

\begin{remark}\label{r4.13}It is easy to find measures with $A_\mu$ open  that do not produce uniformly mean ergodic operators $\lambda_2(\mu)$. Take for instance $G=\T$  and $\mu=\delta_{s}$ for any $s=e^{2\pi i x}$ with $x\notin \Q$. Identifying $\widehat{\T}$ with  the discrete group $\Z$  we have that  $\widehat{\mu}\colon \Z \to \C$ is defined by $\widehat{\mu}(k)=s^k$ for $k\in \Z$. The range of $\widehat{\mu}$ is well-known to be dense in $\T$, whence we see that $\mu$ does not satisfy condition \ref{5} of Corollary \ref{pb=me+ume:ab}.  $\widehat{\T}$ being discrete, $A_\mu$ is sure open.

The situation in this respect is quite different if  $\mu \in L^1(\T)$ or, more generally, if $\widehat{\mu}\in c_0(\Z)$, as we next see.
\end{remark}

\begin{definition}
  If $G$ is a locally compact Abelian group, we denote by $M_0(G)$ the set of measures $\mu\in M(G)$ such that $\widehat{\mu}\in C_0(\widehat{G})$.
\end{definition}

Since the Riemann-Lebesgue theorem proves that $L^1(G)\subset M_0(G)$, the following Corollary applies in particular to all $\lambda_2(f)$ with $f\in L^1(G)$.
\begin{corollary}
\label{ume:ab:l1}
  Let $G$ be an locally compact abelian group and let $\mu\in M_0(G)$. Then  $\lambda_2(\mu) $ is uniformly mean ergodic if and only if
  $\norm{\widehat{\mu}}_\infty \leq 1$ and $A_\mu$ is a closed and open subset of $\widehat{G}$.
   \end{corollary}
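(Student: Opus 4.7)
The plan is to prove both directions by combining Corollary \ref{pb=me+ume:ab} with the fact that $\widehat{\mu}\in C_0(\widehat{G})$; the latter provides a compactness bonus that is not available for a general $\mu\in M(G)$ (cf.\ Remark \ref{r4.13}).

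For the forward direction, assume $\lambda_2(\mu)$ is uniformly mean ergodic. Then Corollary \ref{pb=me+ume:ab}(\ref{5}) already gives $\|\widehat{\mu}\|_\infty\leq 1$ (in fact $\lambda_2(\mu)$ is then also mean ergodic, so part (1) of the same corollary applies), and Corollary \ref{cor:ume:ab} supplies that $A_\mu$ is open. Since $\widehat{\mu}$ is continuous and $A_\mu=\widehat{\mu}^{-1}(\{1\})$, closedness of $A_\mu$ is automatic.

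For the converse, assume $\|\widehat{\mu}\|_\infty\leq 1$ and that $A_\mu$ is clopen. By Corollary \ref{pb=me+ume:ab}(\ref{5}) it suffices to check that $1$ is not an accumulation point of $\widehat{\mu}(\widehat{G})\setminus\{1\}$. Suppose, for contradiction, that there is a net $(\chi_\alpha)\subseteq\widehat{G}$ with $\widehat{\mu}(\chi_\alpha)\neq 1$ for every $\alpha$ and $\widehat{\mu}(\chi_\alpha)\to 1$. Because $\widehat{\mu}\in C_0(\widehat{G})$, the set
\[
K=\bigl\{\chi\in\widehat{G}:|\widehat{\mu}(\chi)|\geq 1/2\bigr\}
\]
is compact in $\widehat{G}$, and $\chi_\alpha\in K$ eventually.

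Passing to a subnet, we may assume $\chi_\alpha\to\chi_0\in K$. Continuity of $\widehat{\mu}$ yields $\widehat{\mu}(\chi_0)=1$, i.e.\ $\chi_0\in A_\mu$. Since $A_\mu$ is open by hypothesis, $\chi_\alpha\in A_\mu$ eventually, contradicting $\widehat{\mu}(\chi_\alpha)\neq 1$. Hence $1$ is not an accumulation point of $\widehat{\mu}(\widehat{G})\setminus\{1\}$, and Corollary \ref{pb=me+ume:ab}(\ref{5}) finishes the proof.

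The only non-routine step is the compactness argument in the converse: it is precisely the $C_0$-hypothesis on $\widehat{\mu}$ that forces any net approaching the value $1$ to cluster in $\widehat{G}$, so that the topological openness of $A_\mu$ can be cashed in. Without this, as Remark \ref{r4.13} illustrates with $\mu=\delta_s$ on $\T$, openness of $A_\mu$ is far from sufficient.
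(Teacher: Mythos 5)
Your proof is correct and follows essentially the same route as the paper: both directions rest on Corollary \ref{pb=me+ume:ab} and Corollary \ref{cor:ume:ab}, and the sufficiency argument is the same compactness trick, using $\widehat{\mu}\in C_0(\widehat{G})$ to trap the net approaching the value $1$ in a compact set, extract a convergent subnet, and contradict openness of $A_\mu$. The only cosmetic difference is that you work with nets where the paper uses a sequence (legitimate, since accumulation in $\C$ is metric), so there is nothing to change.
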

\begin{proof}
After Theorem \ref{pb=me+ume:ab} and Corollary \ref{cor:ume:ab}, we only have to prove the sufficiency part.

Suppose therefore that $A_\mu$ is an open (and closed) subset of $\widehat{G}$ but there is a sequence $(\chi_n)_n$ in $\widehat{G}$ with $ \chi_n\notin A_\mu$ for every $n\in \N$ such that $1=\lim_n \widehat{\mu}(\chi_n)$. Since $\widehat{\mu}\in C_0(\widehat{G})$, we deduce that there is a compact subset $K$ of $\widehat{G}$ such that  $\chi_n\in K$ for every $n$. There is then $\chi_0\in \widehat{G}$ and a subnet $(\chi_\alpha)_\alpha$ of $(\chi_n)_n$ such that $\lim_\alpha \chi_\alpha =\chi_0$. Since $ \lim_n \widehat{\mu}(\chi_n)=1$, we have that $\chi_0\in A_\mu$; but $A_\mu$ being  open, this would imply that the net $(\chi_\alpha)_\alpha$ is eventually in $A_\mu$.
\end{proof}
If $G$ is compact, then $\widehat{G}$ is discrete, we thus have:
\begin{corollary}
\label{ume:ab:compact}
  Let $G$ be a compact Abelian group and let $\mu\in M_0(G)$.  The following assertions are then equivalent:
\begin{enumerate}
\item $\lambda_2(\mu)$ is power bounded.
\item $\lambda_2(\mu)$ is mean ergodic.
\item $\|\widehat{\mu}\|_\infty\leq 1$.
\item  $\lambda_2(\mu)$ is uniformly mean ergodic.
   \end{enumerate}
\end{corollary}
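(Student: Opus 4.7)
The plan is to exploit the compactness of $G$, which forces $\widehat{G}$ to be discrete: in a discrete topology every subset is both open and closed, so the delicate topological hypothesis on $A_\mu$ from Corollary \ref{ume:ab:l1} becomes automatic. Combined with the Hilbert-space results for the normal operator $\lambda_2(\mu)$, the four conditions should collapse.

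First, I would establish the equivalence of (1), (2), and (3). Since $G$ is Abelian, every $\mu\in M(G)$ is operator-normal, so $\lambda_2(\mu)$ is a normal operator on $L^2(G)$. Theorem \ref{hilbert} then tells us that mean ergodicity, power boundedness, and the condition $\|\lambda_2(\mu)\|\leq 1$ are mutually equivalent for $\lambda_2(\mu)$. To match up (3), I invoke the identity $\|\lambda_2(\mu)\|=\|\widehat{\mu}\|_\infty$ from Theorem \ref{FSUE}. This gives (1) $\Leftrightarrow$ (2) $\Leftrightarrow$ (3).

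The implication (4) $\Rightarrow$ (2) is immediate, since norm convergence of $T_{[n]}$ entails strong convergence. For (3) $\Rightarrow$ (4), I appeal to Corollary \ref{ume:ab:l1}: we already have $\|\widehat{\mu}\|_\infty\leq 1$ by hypothesis, and $\mu\in M_0(G)$ is assumed, so it only remains to verify that $A_\mu$ is a closed and open subset of $\widehat{G}$. But $G$ compact implies $\widehat{G}$ discrete, so \emph{every} subset of $\widehat{G}$ is clopen; in particular $A_\mu$ is. Corollary \ref{ume:ab:l1} then delivers uniform mean ergodicity of $\lambda_2(\mu)$.

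There is essentially no obstacle to overcome; all of the technical work is already encapsulated in Theorem \ref{hilbert}, Theorem \ref{FSUE}, and Corollary \ref{ume:ab:l1}. The only substantive point is the structural remark that the clopen condition on $A_\mu$ trivializes in the compact case, which is precisely what makes the spectral gap condition in Theorem \ref{hilbert:ume} automatic here and collapses uniform mean ergodicity onto plain mean ergodicity.
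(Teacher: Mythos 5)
Your proposal is correct and follows exactly the paper's intended route: the paper derives this corollary from Corollary \ref{ume:ab:l1} precisely via the observation that $G$ compact makes $\widehat{G}$ discrete, so $A_\mu$ is automatically clopen, while the equivalence of (1)--(3) is the normal-operator result of Theorem \ref{hilbert} combined with $\|\lambda_2(\mu)\|=\|\widehat{\mu}\|_\infty$ from Theorem \ref{FSUE}. Nothing is missing.
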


Theorem \ref{compactume}  suggests that, for a given  $\mu\in M(G)$, the uniform mean ergodicity of $\lambda_p(\mu)$ may depend on $p$. To confirm this we  will  need the fllowing result due to S. Igari.
   \begin{theorem}[Particular case of Theorem 1 of \cite{iga69}]\label{igari}
  Let $G$ be a nondiscrete locally compact Abelian group, let $1\leq p<2$ and let $\Phi\colon [-1,1]\to \C$. If $\Phi$ does not extend to an entire function, there are  $\mu\in M(G)$ with $\widehat{\mu}(\widehat{G})\subseteq [-1,1]$, and $h\in L^p(G)$ such that  $(\Phi\circ \widehat{\mu} )\cdot \widehat{h}$ is not the Fourier transform of any function  in $L^p(G)$.
  \end{theorem}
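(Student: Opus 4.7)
The plan is to argue by contrapositive: assuming that for every $\mu\in M(G)$ with $\widehat{\mu}(\widehat{G})\subseteq[-1,1]$ and every $h\in L^p(G)$ the product $(\Phi\circ\widehat{\mu})\cdot\widehat{h}$ is the Fourier transform of some element of $L^p(G)$, I would aim to show that $\Phi$ admits an entire extension to $\C$. Thus the strategy reduces to a structural Wiener--L\'evy / Helson--Kahane--Katznelson--Rudin--type dichotomy adapted to the Fourier algebra $\F L^p(G)$.

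First, fix an admissible $\mu$ and consider the linear map $T_\mu\colon \F L^p(G)\to\F L^p(G)$ defined by $T_\mu(\widehat{h})=(\Phi\circ\widehat{\mu})\cdot\widehat{h}$. By our assumption $T_\mu$ is everywhere defined; it also has closed graph (convergence of $h_n$ to $h$ in $L^p(G)$ forces convergence of a subsequence of $(\widehat{h_n})$ pointwise a.e. on $\widehat{G}$, which is enough to identify the multiplier), so the closed graph theorem yields a finite multiplier norm. An additional uniform boundedness argument, applied along a scaled family $t\mapsto t\mu$ or a two-variable family parametrized jointly by $\mu$ and $h$, produces bounds of the form $\|T_\mu\|\leq C(\mu)$ with $C$ controlled by the $M(G)$-norm of $\mu$.

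Second, exploit nondiscreteness of $G$: since $\widehat{G}$ is infinite, one can find in $G$ a dissociate or lacunary sequence of characters $(\chi_k)$ permitting the construction of Riesz-product type measures $\mu_{a,N}\in M(G)$ whose Fourier--Stieltjes transforms take prescribed values close to a fixed point $a\in[-1,1]$ on large subsets of $\widehat{G}$, while having controlled $M(G)$-norm. Then, expanding $\Phi$ by Taylor series at $a$ and comparing $\Phi\circ\widehat{\mu_{a,N}}$ with its partial Taylor approximation $\sum_{k=0}^{N}\Phi^{(k)}(a)(\widehat{\mu_{a,N}}-a)^k/k!$, the multiplier norm bounds from the previous step translate into pointwise inequalities on the Taylor coefficients of $\Phi$.

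Finally, by varying the parameter $N$ and the underlying lacunary sequence one enlarges the quantity $\rho$ such that $(\widehat{\mu_{a,N}}-a)$ realizes multiplier norm of order $\rho$ in $\F L^p$, and the resulting estimates $|\Phi^{(k)}(a)|\rho^{k}/k!\leq C$ valid for all $\rho>0$ force the radius of convergence of $\Phi$ at $a$ to be infinite, i.e.\ $\Phi$ is entire, contradicting the hypothesis. The main obstacle is the third step: for $p\neq 2$ the $\F L^p$-multiplier norm does not reduce to the sup norm of the symbol, so one must delicately balance the requirements that $\widehat{\mu_{a,N}}$ remain real-valued with range inside $[-1,1]$ (to lie in the domain of $\Phi$) and that $(\widehat{\mu_{a,N}}-a)$ carry large $L^p$-multiplier norm. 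This delicate tension between admissibility and sharpness of test measures is exactly what the Rudin--Shapiro/Riesz product construction in Igari's paper is designed to resolve.
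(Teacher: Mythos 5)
The paper itself gives no proof of this statement: it is quoted as a particular case of Theorem 1 of Igari \cite{iga69}, so the only thing your sketch can be measured against is Igari's actual argument, which you are trying to reconstruct. The overall shape you propose (contrapositive; closed graph plus uniform boundedness to get multiplier-norm bounds; test measures built on dissociate/lacunary characters; growth estimates forcing analyticity) is indeed the right family of ideas, but two essential steps are missing, and the second you concede yourself.

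First, you expand $\Phi$ in a Taylor series at $a\in[-1,1]$ and convert multiplier bounds into estimates on $\Phi^{(k)}(a)$. The hypothesis, however, puts no regularity at all on $\Phi$; it is an arbitrary function on $[-1,1]$, and its differentiability, let alone analyticity, is exactly what has to be established. In the Helson--Kahane--Katznelson--Rudin scheme that Igari adapts, one never Taylor-expands the operating function: one tests $\Phi$ on measures of the form $a\delta_e$ plus real trigonometric perturbations supported on a dissociate set and extracts Fourier-coefficient-type quantities of $\Phi$ whose decay is then upgraded to analyticity. Starting from a Taylor expansion makes the argument circular. Second, the whole weight of the theorem rests on constructing measures $\mu$ with $\widehat{\mu}$ real, $\widehat{\mu}(\widehat{G})\subseteq[-1,1]$ and $\|\mu\|$ controlled, for which suitable functions of $\widehat{\mu}$ have $L^p$-multiplier norms growing at a prescribed geometric rate when $p<2$; you defer precisely this to ``the Rudin--Shapiro/Riesz product construction in Igari's paper.'' That construction and its quantitative lower bounds are the heart of the proof; without them the estimates $|\Phi^{(k)}(a)|\rho^{k}/k!\leq C$ for arbitrarily large $\rho$ are never obtained and nothing forces $\Phi$ to be entire. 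The closed-graph/uniform-boundedness step would also have to be run jointly in the pair $(\mu,h)$ to give the dependence of the bound on $\|\mu\|$ that you assert without argument. As it stands, your text is a plausible outline of Igari's strategy, not a proof.
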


  \begin{proposition}\label{exl2diflp}
  For any nondiscrete locally compact Abelian group $G$ and for every $1\leq p<2$ there is a measure $\mu\in M(G)$   such that  $\lambda_2(\mu)$ is uniformly mean ergodic but $\lambda_p(\mu)$ is not.
  \end{proposition}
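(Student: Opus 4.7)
My plan is to apply Igari's theorem (Theorem \ref{igari}) with a function $\Phi$ chosen so that $\Phi \circ \widehat{\mu}$ encodes the Fourier symbol of the would-be resolvent $(I-\lambda_p(\mu))^{-1}$ for a suitably rescaled measure. Concretely, I take $\Phi : [-1,1] \to \C$ defined by $\Phi(t) = 1/(1-t/2)$: this is continuous on $[-1,1]$ with its only singularity (a simple pole) at $t = 2$, and therefore does not extend to an entire function. Theorem \ref{igari} then supplies a measure $\mu_0 \in M(G)$ with $\widehat{\mu_0}(\widehat{G}) \subseteq [-1,1]$ and a function $h \in L^p(G)$ such that $(\Phi \circ \widehat{\mu_0}) \widehat{h}$ is not the Fourier transform of any function in $L^p(G)$.

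Setting $\mu := \mu_0/2$ gives $\widehat{\mu}(\widehat{G}) \subseteq [-1/2, 1/2]$ and $\Phi(\widehat{\mu_0}) = 1/(1-\widehat{\mu})$. By Theorem \ref{FSUE}, $\norm{\lambda_2(\mu)} = \norm{\widehat{\mu}}_\infty \leq 1/2 < 1$, so $\lambda_2(\mu)^n \to 0$ in norm and Theorem \ref{norms}(1) delivers uniform mean ergodicity of $\lambda_2(\mu)$ for free. This disposes of the easy half of the claim.

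The core step is to show that $\lambda_p(\mu)$ is not uniformly mean ergodic, and I would argue by contradiction. Since $|1 - \widehat{\mu}(\chi)| \geq 1/2$ pointwise on $\widehat{G}$, a Fourier-analytic argument (Hausdorff-Young applies because $1 \leq p \leq 2$) gives $\ker(I - \lambda_p(\mu)) = \{0\}$; combined with condition (5) of Theorem \ref{lin74t}, this forces $I - \lambda_p(\mu)$ to be a Banach space isomorphism of $L^p(G)$. Call its bounded inverse $T$. Applying the Fourier transform to the identity $Th - \mu * (Th) = h$, using $\widehat{\mu * f} = \widehat{\mu}\,\widehat{f}$ in $L^{p'}(\widehat{G})$, one obtains $(1 - \widehat{\mu})\widehat{Th} = \widehat{h}$ in $L^{p'}(\widehat{G})$, hence $\widehat{Th} = \widehat{h}/(1 - \widehat{\mu}) = \Phi(\widehat{\mu_0})\widehat{h}$. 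But this exhibits $\Phi(\widehat{\mu_0})\widehat{h}$ as the Fourier transform of the $L^p$ function $Th$, directly contradicting Igari's theorem.

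The only technical point, and not really an obstacle once one has set things up, is the convolution-multiplication identity $\widehat{\mu * f} = \widehat{\mu}\,\widehat{f}$ at the Hausdorff-Young level for arbitrary $f \in L^p(G)$; this is handled by density from the familiar $L^1$ case together with continuity of the Fourier transform $L^p \to L^{p'}$ and of multiplication by the bounded function $\widehat{\mu}$. The substantive step is really the choice of $\Phi$, which simultaneously arranges (via the rescaling $\mu = \mu_0/2$) that $\lambda_2(\mu)$ is UME and that the non-entirety hypothesis of Igari's theorem translates into the failure of $(I-\lambda_p(\mu))^{-1}$ to be a bounded operator.
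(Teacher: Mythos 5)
Your proof is correct, and its skeleton is the same as the paper's: invoke Igari's theorem (Theorem \ref{igari}) with a rational, non-entire $\Phi$, and turn the non-representability of $(\Phi\circ\widehat{\mu_0})\widehat{h}$ into the statement that $1\in\sigma(\lambda_p(\mu))$ for a suitably adjusted measure, while keeping $1$ away from $\sigma(\lambda_2(\mu))$. The implementation differs in two respects. First, the paper takes $\Phi(t)=1/(t-z)$ with $z\in\T\setminus[-1,1]$ and then \emph{rotates}, setting $\mu=\overline{z}\mu_z$, so that $\sigma(\lambda_2(\mu))$ lies on a diameter of the unit disc avoiding $1$ and Theorem \ref{hilbert:ume} gives uniform mean ergodicity of $\lambda_2(\mu)$; you instead put the pole at $t=2$ and \emph{rescale}, $\mu=\mu_0/2$, which gives the stronger and simpler bound $r(\lambda_2(\mu))\leq\|\widehat{\mu}\|_\infty\leq 1/2<1$. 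Second, to rule out uniform mean ergodicity of $\lambda_p(\mu)$ the paper shows $1\in\sigma(\lambda_p(\mu))$ and then appeals to Zafran's lemma (\cite[Lemma 2.2]{zafr73}) — isolated points of the spectrum of a multiplier lie in $\overline{\widehat{\mu}(\widehat{G})}$ — to see that $1$ is an accumulation point of the spectrum; you avoid that citation by observing that $|1-\widehat{\mu}|\geq 1/2$ forces $\ker(I-\lambda_p(\mu))=\{0\}$ (via Hausdorff--Young and injectivity of the Fourier transform on $L^p$, $1\leq p\leq 2$), so that conditions (5)/(6) of Theorem \ref{lin74t} make uniform mean ergodicity equivalent to $1\notin\sigma(\lambda_p(\mu))$, which your resolvent computation contradicts. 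Both routes are sound; yours is slightly more self-contained (no Zafran), at the small cost of having to justify the identity $\widehat{\mu\ast f}=\widehat{\mu}\,\widehat{f}$ for $f\in L^p$, which your density argument handles correctly (and which the paper's own Fourier-transform step implicitly uses as well).
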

  \begin{proof}
  Define $\Phi\colon [-1,1]\to \C$ by $\Phi(t)=1/(t-z)$ where $z\in \T\setminus[-1,1]$. Since $\Phi$ cannot be extended to an entire function we can find $\mu_z \in M(G)$ and $h_z\in L^p(G) $ with the properties of Theorem \ref{igari}.

  Suppose that  $z\notin \sigma(\lambda_p (\mu_z))$. In that case,  there  would be  an operator $T\in \ll{}(L^p(G))$ such that $\mu_z \ast Tf
- zTf =f$ for  every $f\in L^p(G)$. Taking Fourier-Stieltjes transforms we see that, for  every $f\in L^p(G)$,
\[ \widehat{Tf}\left(\widehat{\mu_z}-z\right)=\widehat{f}.\]
The preceding equality applied to $f=h_z$ implies then that
\[\widehat{Th_z}=( \Phi \circ \widehat{\mu_z})\cdot \widehat{h_z}, \]
which goes against the choice of $\mu_z$ and $h_z$ from Theorem \ref{igari}. We conclude that $z\in \sigma(\lambda_p (\mu_z))$.

Once we have found $\mu_z\in M(G)$ with $\widehat{\mu_z}(\widehat{G})\subseteq [-1,1]$ and $z\in \sigma(\lambda_p (\mu_z))$ we consider
$\mu=\overline{z}\mu_z$.  Then
\[\sigma(\lambda_2(\mu))=
\overline{z}\overline{\widehat{\mu_z}(\widehat{G})}\] is contained in a diameter of the unit circle not passing through 1.  According to Corollary \ref{hilbert:ume}, the operator  $\lambda_2(\mu)$ is then  uniformly ergodic. On the other hand, $1\in \sigma(\lambda_p(\mu))$. Since   isolated points of the spectrum are necessarily in the range of the Fourier-Stieltjes transform (see \cite[Lemma 2.2]{zafr73}) and $1\notin
\overline{\widehat{\mu}(\widehat{G})}$, we deduce from Corollary \ref{hilbert:ume} that $\lambda_p(\mu)$ is not uniformly mean ergodic.
  \end{proof}

\section{Positive  measures: mean ergodicity}
\label{mepositive}
This section is devoted to make clear that the ergodic behaviour of the operators $\lambda_p(\mu)$ is simpler when $\mu$ is positive.
\subsection{Positive measures supported in an  amenable subgroup: reflexive case}

In this section we analyze mean ergodicity of convolution operators induced by positive measures whose support is contained in an amenable subgroup. The set of techniques at reach for this case is much richer and leads to conclusive results.

 \begin{theorem}
  \label{pb=me:amen}
   Let $G$ be a locally compact group and let $\mu\in M(G)$ be a \emph{positive} measure  with $H_\mu$ amenable. If $1<p<\infty$, the following assertions are  equivalent:
  \begin{enumerate}
    \item\label{11} $\lambda_p(\mu)$ is power bounded.
        \item\label{22} $\lambda_p(\mu)$ is mean ergodic.
\item\label{22b} $\lambda_p(\mu)$ is weakly mean ergodic.
      \item\label{44} $\lambda_p(\mu)$ is Ces\`aro bounded.
      \item\label{33} $\|\mu\| \leq 1$.
      \item\label{66} $\mu$ is vague-ergodic.
        \end{enumerate}
\end{theorem}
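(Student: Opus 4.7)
The plan is to establish the six equivalences by running the cycle $(5) \Rightarrow (1) \Rightarrow (2) \Rightarrow (3) \Rightarrow (4) \Rightarrow (5)$ and then separately proving $(5) \Leftrightarrow (6)$. Four of the five cyclic implications are formal once the hypotheses are in place. For $(5) \Rightarrow (1)$, positivity gives $\|\mu^n\| = \mu^n(G) = \|\mu\|^n$, so Young's inequality (Theorem \ref{young}) yields $\|\lambda_p(\mu)^n\| = \|\lambda_p(\mu^n)\| \leq \|\mu\|^n \leq 1$. Next, $(1) \Rightarrow (2)$ is the mean ergodic theorem (Corollary \ref{MEth}), since $L^p(G)$ is reflexive for $1<p<\infty$ and power boundedness trivially gives both Ces\`aro boundedness and $\|\lambda_p(\mu)^n\|/n \to 0$. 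The implication $(2) \Rightarrow (3)$ is immediate, and $(3) \Rightarrow (4)$ follows from two applications of the uniform boundedness principle (WOT convergence of $(T_{[n]})$ forces pointwise weak boundedness, hence pointwise norm boundedness, hence uniform norm boundedness of the operator sequence).

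The substantive step of the cycle is $(4) \Rightarrow (5)$, where both hypotheses of the theorem come into play. First, Proposition \ref{cbradio} extracts $r(\lambda_p(\mu)) \leq 1$ from Ces\`aro boundedness. The amenability of $H_\mu$ is then injected through Theorem \ref{cornormamen}(2), which identifies $r(\lambda_p(\mu))$ with $\|\mu\|$, and combining the two yields $\|\mu\| \leq 1$.

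For the separate equivalence, $(5) \Rightarrow (6)$ follows from positivity: $(\|\mu^n\|)_n = (\|\mu\|^n)_n$ is bounded, so Theorem \ref{grenander} guarantees vague-ergodicity. Conversely, $(6) \Rightarrow (5)$ uses that vague convergence in $M(G) = C_0(G)^*$ is weak-star convergence, so the uniform boundedness principle makes $(\mu_{[n]})$ norm bounded; for positive $\mu$ one has $\|\mu_{[n]}\| = n^{-1}\sum_{k=1}^n \|\mu\|^k$, and boundedness of this sequence forces $\|\mu\| \leq 1$.

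The main obstacle is recognising that amenability of $H_\mu$ is exactly what bridges the operator-theoretic condition (4) and the measure-theoretic condition (5): without it, Ces\`aro boundedness only controls the spectral radius of $\lambda_p(\mu)$, which in general can be strictly smaller than $\|\mu\|$. Positivity plays a complementary role, ensuring the multiplicativity $\|\mu^n\| = \|\mu\|^n$ that drives both $(5) \Rightarrow (1)$ and $(6) \Rightarrow (5)$.
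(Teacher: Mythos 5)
Your proof is correct, and its core coincides with the paper's: the cycle through conditions (1)--(5) is closed with the same ingredients (Corollary \ref{MEth} for power bounded $\Rightarrow$ mean ergodic, Banach--Steinhaus for (3) $\Rightarrow$ (4), Proposition \ref{cbradio} combined with the Berg--Christensen identity $r(\lambda_p(\mu))=\norm{\mu}$ of Theorem \ref{cornormamen}(2) for (4) $\Rightarrow$ (5), and Young's inequality plus multiplicativity of total mass of positive measures for (5) $\Rightarrow$ (1)), and (5) $\Rightarrow$ (6) is Theorem \ref{grenander} in both. Where you genuinely diverge is in closing the loop with vague ergodicity: the paper proves (6) $\Rightarrow$ (3) via Proposition \ref{allp}, i.e.\ through the vague--WOT continuity of $\lambda_p$ on bounded sets and Yosida's theorem, whereas you prove (6) $\Rightarrow$ (5) directly: Banach--Steinhaus makes $(\mu_{[n]})_n$ norm bounded, and positivity gives $\norm{\mu_{[n]}}=\frac{1}{n}\sum_{k=1}^{n}\norm{\mu}^{k}$, which is unbounded when $\norm{\mu}>1$. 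Your argument is more elementary, but it leans on positivity; the paper's route is the one that still gives weak mean ergodicity from vague ergodicity for non-positive measures and is reused elsewhere in the paper. A second, smaller difference: the paper begins by invoking Corollary \ref{adapted} to reduce to the case where $G$ itself is amenable, while you bypass this reduction by applying Theorem \ref{cornormamen}(2) directly — legitimate, since that statement is formulated in terms of $H_\mu$ and of the operator acting on $L^p(G)$, so no transfer between $L^p(G)$ and $L^p(H_\mu)$ is needed for this particular theorem.
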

\begin{proof} By Corollary  \ref{adapted} we can restrict ourselves to the case when $G$ itself is amenable. The Mean Ergodic Theorem (Corollary \ref{MEth})  proves that \ref{11} implies \ref{22}. This one obviously implies  \ref{22b} and, by Banach-Steinhaus, \ref{22b} implies \ref{44}.
 Since $\norm{\mu}=r(\lambda_p(\mu))$, by Theorem \ref{cornormamen}, Corollary \ref{cbradio}  shows that \ref{44} implies \ref{33}. Since $r(\lambda_p(\mu))=\norm{\mu}$ implies that $\|\lambda_p(\mu)\|=\|\mu\|$,  Statement \ref{33} certainly implies Statement \ref{11}. Hence, statements \ref{11}--\ref{33} are all equivalent.

 Finally,  \ref{33} implies \ref{66},  by Theorem \ref{grenander}, and \ref{66} implies \ref{22b} by Theorem \ref{allp}.
 \end{proof}
 \begin{remark}
   Theorem \ref{pb=me:amen} does not hold if $H_\mu$ is not amenable, see Example \ref{ex:free}.
 \end{remark}
 \subsection{Positive measures: mean ergodicity in $L^1(G)$}
 Mean ergodicity of $\lambda_1(\mu)$ is a much more restrictive condition, as we now see. Here we are not assuming conditions a priori on $H_\mu$. First of all, we observe that we can reduce our study to probability measures.

 \begin{proposition}
 \label{reduccionprobabilidad}
 Let $G$ be a locally compact group and let $\mu\in M(G)$ be positive.
 \begin{enumerate}
 \item If $\|\mu\|<1$ then $(\lambda_1(\mu^n))_n$ is norm convergent to 0, and then $\lambda_1(\mu)$ is uniformly mean ergodic.
 \item If $\|\mu\|>1$ then $\lambda_1(\mu)$ is not mean ergodic.
 \end{enumerate}
 \end{proposition}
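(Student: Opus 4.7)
The plan is to leverage two basic facts: Theorem \ref{young}(3), which identifies $\norm{\lambda_1(\mu)}$ with $\norm{\mu}$ exactly (not just as an inequality), and the multiplicativity of the total variation on positive measures, $\norm{\mu^n}=\norm{\mu}^n$. The latter follows because the convolution of positive measures is positive, so $\norm{\mu\ast\nu}=(\mu\ast\nu)(G)=\mu(G)\nu(G)=\norm{\mu}\norm{\nu}$. Together these yield $\norm{\lambda_1(\mu)^n}=\norm{\lambda_1(\mu^n)}=\norm{\mu}^n$, reducing both statements to computations of the spectral radius of $\lambda_1(\mu)$.

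For (1), the above identity forces $r(\lambda_1(\mu))\leq \norm{\lambda_1(\mu)}=\norm{\mu}<1$. Applying Theorem \ref{norms}(\ref{r(T)}), I immediately get that $\lambda_1(\mu)$ is uniformly mean ergodic and that its powers converge to $0$ in the operator norm; since $\lambda_1(\mu)^n=\lambda_1(\mu^n)$, this is the claimed norm convergence of $(\lambda_1(\mu^n))_n$ to $0$.

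For (2), I would argue by contradiction. Suppose $\lambda_1(\mu)$ is mean ergodic. Then $(\lambda_1(\mu)_{[n]})_n$ is strongly convergent, and Banach–Steinhaus gives $\sup_n\norm{\lambda_1(\mu)_{[n]}}<\infty$. The standard identity
\[\frac{\lambda_1(\mu)^n}{n}=\lambda_1(\mu)_{[n]}-\frac{n-1}{n}\lambda_1(\mu)_{[n-1]}\]
then forces $\norm{\lambda_1(\mu)^n}/n$ to be bounded. But by the first paragraph, $\norm{\lambda_1(\mu)^n}/n=\norm{\mu}^n/n$, which diverges to $\infty$ because $\norm{\mu}>1$. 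This contradiction rules out mean ergodicity.

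There is no real obstacle here: the entire proposition is an exercise in exploiting the exact (not approximate) equalities $\norm{\lambda_1(\mu)}=\norm{\mu}$ and $\norm{\mu^n}=\norm{\mu}^n$ for positive $\mu$. The only care needed is to cite positivity explicitly when claiming the second identity, since it fails for general signed or complex measures.
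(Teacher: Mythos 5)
Your proof is correct and follows essentially the same route as the paper: the key identity $\norm{\lambda_1(\mu^n)}=\norm{\mu^n}=\mu(G)^n$ for positive $\mu$, from which (1) is immediate via $r(\lambda_1(\mu))<1$ and (2) follows because mean ergodicity would force $\norm{\lambda_1(\mu^n)}/n$ to stay bounded. Your write-up just makes explicit the Banach--Steinhaus and Ces\`aro-identity steps that the paper leaves implicit.
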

 \begin{proof}
 Observe that, for each $n\in\N$, $\norm{\lambda_1(\mu^n)}=\|\mu^n\|=\mu(G)^n$. Thus (1) is immediate and (2) follows from the unboundedness of the sequence  $(\|\lambda(\mu^n)\|/n)_n$ when $\mu(G)>1$.
 \end{proof}

\begin{theorem}
\label{lambda1}
Let $\mu$ be a probability measure on $G$. Then $\lambda_1(\mu)$ is mean ergodic if and only if $H_\mu$ is  compact.
\end{theorem}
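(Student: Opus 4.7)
My plan is to prove the two implications separately, handling necessity by a direct mass-preservation contradiction and sufficiency by a density argument that bootstraps from the $L^2$ case.

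For the necessity direction (mean ergodic implies $H_\mu$ compact), I would argue by contraposition. Since $\mu$ is a probability measure, $\norm{\lambda_1(\mu)^n} = \norm{\mu^n} = 1$, so in particular $\norm{\lambda_1(\mu)^n}/n \to 0$. Were $\lambda_1(\mu)$ mean ergodic, Theorem \ref{basic:me} would yield $\lambda_1(\mu)_{[n]} \to P_{\lambda_1(\mu)}$ in the strong operator topology, with range $\Fix{\lambda_1(\mu)}$. But if $H_\mu$ is not compact, Theorem \ref{nocompact2} forces $\Fix{\lambda_1(\mu)} = \{0\}$, hence $P_{\lambda_1(\mu)} = 0$. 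To derive a contradiction, I would fix any $f \in L^1(G)$ with $f \geq 0$ and $\norm{f}_1 = 1$ and note that each $\mu_{[n]}$ is again a probability measure; Fubini together with left invariance of the Haar measure gives $\norm{\mu_{[n]} * f}_1 = \int \mu_{[n]} * f \, \dmg = 1$, incompatible with convergence to $0$ in $L^1$.

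For the sufficiency direction ($H_\mu$ compact implies mean ergodic), the idea is to exploit the local finiteness gained from compactness of $H_\mu$ and reduce to the Hilbert space case. Since $\mu$ is a probability measure, $\lambda_2(\mu)$ is a contraction on the reflexive space $L^2(G)$, so by Corollary \ref{MEth} its Ces\`aro averages converge strongly to some projection $P$. If $f \in C_{00}(G)$ with $K := \supp(f)$, then every $\mu^{*k} * f$, and hence $\mu_{[n]} * f$, is supported in the compact set $H_\mu K$; the $L^2$-limit $Pf$ therefore also lives in $H_\mu K$. On a set of finite Haar measure, Cauchy--Schwarz turns $L^2$-convergence into $L^1$-convergence, so $\lambda_1(\mu)_{[n]} f \to Pf$ in $L^1$ for every $f \in C_{00}(G)$. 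Since $\norm{\lambda_1(\mu)_{[n]}} \leq 1$ uniformly and $C_{00}(G)$ is norm-dense in $L^1(G)$, a standard three-epsilon argument extends convergence to all of $L^1(G)$, yielding the desired mean-ergodic projection.

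The main potential pitfall lies on the sufficiency side: one must verify that the $L^2$-limit $Pf$ truly lives on the compact set $H_\mu K$, otherwise the passage from $L^2$ to $L^1$ via Cauchy--Schwarz fails. This follows because all averages $\mu_{[n]} * f$ vanish off the closed set $H_\mu K$, and this property is inherited by the $L^2$-limit. The necessity direction is essentially mechanical once Theorem \ref{nocompact2} is invoked and the mass-preservation identity $\norm{\mu_{[n]} * f}_1 = \norm{f}_1$ for nonnegative $f$ is observed.
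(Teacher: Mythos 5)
Your proof is correct, and on the sufficiency side it takes a genuinely different route from the paper. For necessity you argue essentially as the paper does — the contradiction comes from the Fubini mass-preservation identity $\norm{\mu_{[n]}\ast f}_1=\norm{f}_1$ for $f\geq 0$ — except that you identify the limit of the Ces\`aro means as $0$ directly, by combining Theorem \ref{basic:me} with Theorem \ref{nocompact2} applied at $p=1$, whereas the paper first gets $\sigma(L^1(G),C_{00}(G))$-convergence to $0$ from the $L^2$ case and then uses Hausdorffness of that topology to identify the norm limit; your shortcut is legitimate since $\norm{\lambda_1(\mu)^n}/n\to 0$ and $\Fix{\lambda_1(\mu)}=\{0\}$ force $P_{\lambda_1(\mu)}=0$. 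For sufficiency the paper does not argue on $L^1$ directly at all: it invokes its vague-ergodicity machinery (Theorem \ref{grenander} and Proposition \ref{allp}), whose engine is the vague-SOT sequential continuity of $\lambda_p$ for $H_\mu$ compact (Proposition \ref{rem1}(2), a dominated-convergence argument). You instead bootstrap from the $L^2$ mean ergodic theorem (Corollary \ref{MEth}), confine all the functions $\mu_{[n]}\ast f$, $f\in C_{00}(G)$, and their $L^2$-limit to the compact set $H_\mu K$, pass from $L^2$- to $L^1$-convergence by Cauchy--Schwarz on that finite-measure set, and finish by density and uniform boundedness of $\lambda_1(\mu)_{[n]}$. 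Both arguments exploit the same phenomenon (compactness of $H_\mu$ traps the supports), but the paper's route buys more — mean ergodicity for all $1\leq p<\infty$ simultaneously and the equivalence with vague ergodicity of $\mu$ — while yours is more elementary and self-contained, avoiding the vague-topology continuity results entirely. The point you flag as delicate (that $Pf$ vanishes a.e.\ off $H_\mu K$) is indeed the right thing to check, and your justification — that the functions vanishing a.e.\ off a fixed closed set form a closed subspace of $L^2(G)$, inherited by the strong limit — is sound.
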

\begin{proof}
If $H_\mu$ is compact, we only have to apply (2) implies (3)$^\prime$ of Theorem \ref{allp}.

Assume now that $H_\mu$ is not compact. By  Corollaries \ref{MEth} and  \ref{nocompact2}, $\lambda_2(\mu)$ is a mean ergodic operator without nonzero fixed points. As a consequence,  $\norm{\lambda_2(\mu)_{[n]}f}_2$ must converge to 0 for every  $f\in L^2(G)$.

Let  $f\in C_{00}(G)$, $f\geq 0$. By the above  $(\langle\lambda_{1}(\mu)_{[n]}f,g\rangle)_n$ converges to 0 for each $g\in C_{00}(G)$, i.e. $(\lambda_1(\mu)_{[n]}f)_n$ converges to 0 in the $\sigma(L^1(G),C_{00}(G))$-topology.
Since we are assuming that $\lambda_1(\mu)$ is mean ergodic, $\lambda_1(\mu)_{[n]}f$ must converge  in $L^1(G)$. But, as  the $\sigma(L^1,C_{00}(G))$-topology is Hausdorff and is weaker than the norm topology,  we conclude that, in fact,
\begin{equation}\label{to0} \lim_n \bnorm{\lambda_{1}(\mu)_{[n]}f}_1=0.\end{equation}
This actually holds  for every for every $f\in L^1(G)$ but we will only need that fact for some $f\in C_{00}(G)$, $f\geq 0$, $f\neq 0$.

A simple application of  Fubini's theorem shows that, for every $f\in L^1(G)$,
\[\int (\mu\ast f)(x) \dmg(x)=\int f(x)\dmg(x)\]
Then, for any $f\in C_{00}(G)$, $f\geq 0$,
\begin{align*}
  \bnorm{f}_1&=\bnorm{\mu_{[n]}\ast f}_1=\bnorm{ \lambda_1(\mu)_{[n]}f}_1,
\end{align*}
If we let $n$ go to infinity and apply \eqref{to0} we reach a contradiction  as soon as $f\neq 0$.
\end{proof}
    \begin{remark}\label{ebc}
  As can be remarked in the proof of Theorem \ref{lambda1}, the reason for the failure of  $\lambda_1(\mu)$ to be mean ergodic lies in  its action against positive functions. A way to avoid these functions is to consider the  subspace $L_0^1(G)=\left\{f\in L^1(G)\colon \int f \dmg=0\right\}$. Since this subspace  is invariant under the action of $\lambda_1$, one can consider the operator $\restr{\lambda_1^0(\mu)}{L_0^1(G)}$ and study  its mean ergodicity. This was done by Rosenblatt \cite{rose81} who defined a measure $\mu \in M(G)$ to be \emph{ergodic by convolutions} if $\lambda_1^0(\mu)_{[n]}$ converges to 0. In that same paper, Rosenblatt proves that a locally compact group contains a probability measure that is  ergodic by convolutions if and only if the group is $\sigma$-compact  and amenable. When $G$ is compact, the It\^o-Kawada Theorem proves that  a  probability measure $\mu\in M(G)$  is ergodic by convolutions if and only if $H_\mu=G$, and  the Choquet-Deny theorem implies that the same assertion is true when $G$ is Abelian \cite[Theorems 1.4 and 1.5]{rose81}, see also \cite{cuny03} and  \cite{jawo08} and the references therein for more on this property. It follows from the facts just  collected that every adapted (that is, with  $H_\mu=G$) probability measure  supported in a noncompact abelian group satisfies that $\lambda_1^0(\mu)$ is mean ergodic, while  $\lambda^1(\mu)$ is not.  On the other hand,  if $G$ is compact and Abelian, and $\mu\in M(G)$ is a probability measure which is not adapted, then $\lambda_1(\mu)$ is mean ergodic but $\lambda_1^0(\mu)$ is not. Neither  concept is  therefore  stronger than the other.

\end{remark}
We can now complete the panorama of Proposition \ref{allp}, Theorem \ref{pb=me:amen} and Theorem \ref{lambda1}.
\begin{theorem}
  \label{ec0}
Let  $G$ be a  locally compact group and let  $\mu\in M(G)$ be a \emph{positive} measure with $H_\mu$ compact. For $1\leq p<\infty$, the following assertions are equivalent:
\begin{enumerate}
\item $\mu$ is vague-ergodic.
\item  $\mu$ is Ces\`aro bounded.
\item $\|\mu\|\leq 1$.
\item
$\lambda_p(\mu)$ is mean ergodic.
\end{enumerate}
      \end{theorem}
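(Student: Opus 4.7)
The plan is to close the cycle of implications, noting that positivity makes the norm multiplicative ($\|\mu^n\|=\|\mu\|^n$), which is the central simplification. I will handle $p=1$ separately at each step where the amenability-based machinery from Theorem \ref{pb=me:amen} does not apply.

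First I would observe that (1) trivially implies (2): a vague limit of a bounded sequence, by Banach--Steinhaus on $C_0(G)^* = M(G)$, gives Ces\`aro boundedness of $(\mu_{[n]})_n$. For (2) $\Rightarrow$ (3) I use that, since $\mu \geq 0$,
\[\bnorm{\mu_{[n]}} = \frac{1}{n}\sum_{k=1}^n \bnorm{\mu^k} = \frac{1}{n}\sum_{k=1}^n \norm{\mu}^k,\]
so Ces\`aro boundedness immediately forces $\norm{\mu} \leq 1$. For (3) $\Rightarrow$ (1), the equality $\norm{\mu^n} = \norm{\mu}^n \leq 1$ means $(\norm{\mu^n})_n$ is bounded, and Theorem \ref{grenander} yields vague ergodicity. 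At this point (1), (2), (3) are equivalent.

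Next, for (3) $\Rightarrow$ (4), I split on $p$. For $1 < p < \infty$, since $H_\mu$ is compact and hence amenable, Theorem \ref{pb=me:amen} gives the implication directly. For $p=1$, I split further on the size of $\|\mu\|$: if $\norm{\mu} < 1$, Proposition \ref{reduccionprobabilidad}(1) gives uniform (hence ordinary) mean ergodicity of $\lambda_1(\mu)$; if $\norm{\mu} = 1$, then $\mu$ is a probability measure and Theorem \ref{lambda1} applies to give mean ergodicity of $\lambda_1(\mu)$, since $H_\mu$ is compact.

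Finally, for (4) $\Rightarrow$ (3), again I split. For $1 < p < \infty$, the amenability of $H_\mu$ combined with Theorem \ref{pb=me:amen} gives $\norm{\mu} \leq 1$ from mean ergodicity of $\lambda_p(\mu)$. For $p=1$, mean ergodicity of $\lambda_1(\mu)$ implies it is Ces\`aro bounded, and the isometric identity $\norm{\lambda_1(\mu_{[n]})} = \norm{\mu_{[n]}}$ from Theorem \ref{young}(3) transfers this to Ces\`aro boundedness of $\mu$ itself, which we already showed forces $\norm{\mu} \leq 1$.

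There is no real obstacle in this argument; the work was done in the preceding sections. The only subtlety is the $p=1$ case of (3) $\Rightarrow$ (4), where the amenability proof of Theorem \ref{pb=me:amen} breaks down and one must invoke the more delicate compact-support dichotomy of Theorem \ref{lambda1}. This is exactly where the hypothesis that $H_\mu$ is compact becomes essential, rather than merely convenient.
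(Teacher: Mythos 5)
Your proof is correct, and it closes the equivalences by a route that differs from the paper's in two places worth noting. For (2) $\Rightarrow$ (3) the paper passes through the operator side: Ces\`aro boundedness of $\mu$ equals Ces\`aro boundedness of $\lambda_1(\mu)$ by Theorem \ref{young}(3), then Proposition \ref{cbradio} gives $r(\lambda_1(\mu))\leq 1$, and the Berg--Christensen result (Theorem \ref{cornormamen}, using amenability of the compact group $H_\mu$) upgrades this to $\norm{\mu}\leq 1$; your argument instead exploits only positivity, namely $\norm{\mu_{[n]}}=\frac1n\sum_{k=1}^n\norm{\mu}^k$, which is more elementary and makes no use of amenability or spectral radius at this step. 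Second, the paper closes the cycle with (4) $\Rightarrow$ (1) via the vague--WOT continuity machinery of Proposition \ref{allp} (which tacitly also needs Ces\`aro boundedness of $\mu$, recovered there from the $L^1$-isometry or from $\norm{\lambda_p(\nu)}=\norm{\nu}$ for positive $\nu$ with amenable support), whereas you prove (3) $\Rightarrow$ (1) directly from Theorem \ref{grenander} and then (4) $\Rightarrow$ (3) by splitting on $p$: Theorem \ref{pb=me:amen} for $1<p<\infty$ and the isometry $\norm{\lambda_1(\mu_{[n]})}=\norm{\mu_{[n]}}$ plus your (2) $\Rightarrow$ (3) for $p=1$. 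Both arrangements are sound; yours trades Proposition \ref{allp} for Theorem \ref{grenander} and keeps the $p=1$ bookkeeping explicit. A small bonus of your write-up is the case split $\norm{\mu}<1$ versus $\norm{\mu}=1$ in (3) $\Rightarrow$ (4) for $p=1$ (via Proposition \ref{reduccionprobabilidad} and Theorem \ref{lambda1} respectively), which the paper's one-line citation of Theorem \ref{lambda1} --- a statement about probability measures --- leaves implicit.
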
\
\begin{proof} That
Statement (1) implies Statement (2)  is a simple consequence of the Banach-Steinhaus theorem.
Next, (2)  is the same  as
Ces\`aro boundedness of  $\lambda_1(\mu)$ (by  (3) of Theorem \ref{young}) and the latter implies, by  Proposition \ref{cbradio}, that
 $r(\lambda_1(\mu))\leq 1$.   Since,  by  Proposition \ref{cornormamen},  this implies  $\norm{\mu}\leq 1$, we see that Statement (2) implies Statement (3). (3) implies (4) by Theorem \ref{pb=me:amen} if $p>1$ and by Theorem \ref{lambda1} for $p=1$. Finally (4) implies (1) follows from Proposition \ref{allp}.
\end{proof}
 \section{Positive measures: uniform mean ergodicity    }\label{sec:posUME}
If $\mu$ is positive, the equality $\norm{\lambda_p(\mu)}=\norm{\mu}$ makes Theorem \ref{compactume} into the following somewhat cleaner characterization.
\begin{theorem}
  \label{reflexive}
  Let $G$ be a locally compact group and let $\mu$ be a positive operator-normal measure with $H_\mu$ amenable.
The following assertions are equivalent for $1\leq p\leq \infty$.
\begin{enumerate}
\item  $\lambda_p(\mu)$ is uniformly mean  ergodic.
\item  1 is not an accumulation point of $\sigma(\lambda_p(\mu))$, \emph{and}  $ \norm{\mu}\leq 1$.
    \end{enumerate}
  \end{theorem}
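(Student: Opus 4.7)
The plan is to deduce Theorem~\ref{reflexive} directly from Theorem~\ref{compactume} by replacing the growth hypothesis $\lim_n \norm{\lambda_p(\mu^n)}/n = 0$ with the cleaner condition $\norm{\mu} \leq 1$. Since Theorem~\ref{compactume} already establishes that $\lambda_p(\mu)$ is uniformly mean ergodic if and only if $1$ is not an accumulation point of $\sigma(\lambda_p(\mu))$ \emph{and} $\lim_n \norm{\lambda_p(\mu^n)}/n = 0$, it suffices to verify that, for positive $\mu$ with $H_\mu$ amenable, the growth condition is equivalent to $\norm{\mu}\leq 1$.

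The key observation is that for any positive measure $\mu$ the powers $\mu^n$ are again positive, and hence (by Fubini applied to the positive integrand $\mathbf{1}$) $\norm{\mu^n} = \mu^n(G) = \mu(G)^n = \norm{\mu}^n$. Moreover, the support of $\mu^n$ lies in the sub-semigroup generated by $\supp(\mu)$, so $H_{\mu^n} \subseteq H_\mu$ is a closed subgroup and therefore inherits amenability. Thus Theorem~\ref{cornormamen}(2) applies to each $\mu^n$ and yields $r(\lambda_p(\mu^n)) = \norm{\mu^n} = \norm{\mu}^n$.

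Next I would combine this with the elementary identity $r(\lambda_p(\mu^n)) = r(\lambda_p(\mu))^n$ and Theorem~\ref{young}(2) to obtain the chain
\[
\norm{\mu}^n \;=\; r(\lambda_p(\mu^n)) \;\leq\; \norm{\lambda_p(\mu^n)} \;\leq\; \norm{\mu^n} \;=\; \norm{\mu}^n,
\]
which collapses to the equality $\norm{\lambda_p(\mu^n)} = \norm{\mu}^n$ for every $n\in\N$. Consequently, $\lim_n \norm{\lambda_p(\mu^n)}/n = 0$ holds precisely when $\lim_n \norm{\mu}^n/n = 0$, which in turn is equivalent to $\norm{\mu}\leq 1$.

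Substituting this equivalence into Theorem~\ref{compactume} gives the claimed equivalence of (1) and (2). There is no real obstacle here: the whole argument is a bookkeeping reduction once one identifies that positivity collapses the operator-norm growth to a genuine geometric growth governed by $\norm{\mu}$. The only point requiring (trivial) care is the $p=\infty$ case, where one appeals to the fact that $\norm{\lambda_\infty(\mu)} = \norm{\mu}$ for positive $\mu$ via duality (or directly, since Theorem~\ref{compactume} is stated in that range), so the same chain of inequalities closes.
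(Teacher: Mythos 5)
Your argument is correct and is essentially the paper's own proof: the paper likewise deduces Theorem \ref{reflexive} from Theorem \ref{compactume} by noting that positivity plus amenability of $H_\mu$ forces $\norm{\lambda_p(\mu^n)}=\norm{\mu^n}=\norm{\mu}^n$ (via Theorem \ref{cornormamen} and Theorem \ref{young}), so the growth condition $\lim_n\norm{\lambda_p(\mu^n)}/n=0$ reduces to $\norm{\mu}\leq 1$. You merely spell out the details (including the direct computation for $p=\infty$) that the paper leaves implicit.
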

 Taking    Theorem \ref{lambda1}  into account, Theorem \ref{reflexive} yields the following Corollary. It applies to every probability measure in an Abelian group, although in that case a simpler proof using duality theory can be applied to every measure of norm 1.
  \begin{corollary}
Let $G$ be a locally compact  group and let $\mu$ be an operator normal probability measure such that $H_\mu$ is amenable and not compact. Then $1$ is an accumulation point of $\sigma(\lambda_1(\mu))$ $(=\sigma(\mu))$.
   \end{corollary}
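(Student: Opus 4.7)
The plan is to deduce this directly from the two main results that have just been proved, namely Theorem \ref{lambda1} and Theorem \ref{reflexive}, by a clean contrapositive argument. Since $\mu$ is a probability measure, $\|\mu\|=1$, so in particular $\|\mu\|\leq 1$. Because $\mu$ is positive and operator-normal and $H_\mu$ is amenable, the hypotheses of Theorem \ref{reflexive} are met for every $1\le p\le\infty$, and in particular for $p=1$.

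Now suppose, for a contradiction, that $1$ is \emph{not} an accumulation point of $\sigma(\lambda_1(\mu))$. Then both clauses of condition (2) of Theorem \ref{reflexive} are satisfied for $p=1$, so that theorem gives that $\lambda_1(\mu)$ is uniformly mean ergodic, hence in particular mean ergodic. On the other hand, $H_\mu$ is not compact by hypothesis and $\mu$ is a probability measure, so Theorem \ref{lambda1} tells us that $\lambda_1(\mu)$ is \emph{not} mean ergodic. This contradiction forces $1$ to be an accumulation point of $\sigma(\lambda_1(\mu))$.

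The equality $\sigma(\lambda_1(\mu))=\sigma(\mu)$ is the standard identification between the spectrum of the convolution operator on $L^1(G)$ and the spectrum of $\mu$ in the Banach algebra $M(G)$, already invoked just before Proposition \ref{lemma6.6}, and requires no separate argument.

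I expect no serious obstacle: the whole point is that the pair ``Theorem \ref{lambda1} (which blocks mean ergodicity for noncompact $H_\mu$) + Theorem \ref{reflexive} (which characterises uniform mean ergodicity via the isolation of $1$ in the spectrum)'' has been set up precisely so as to make this corollary a one-line consequence. The only small subtlety worth flagging is that one must apply Theorem \ref{reflexive} with $p=1$ (not just $p>1$), which is legitimate since that theorem is stated for the full range $1\le p\le\infty$. The remark about the abelian case requires no proof here; it merely points out that in that setting the inclusion of the closure of $\widehat{\mu}(\widehat G)$ in $\sigma(\lambda_1(\mu))$ combined with $\widehat{\mu}(1_{\widehat G})=1$ and the Riemann-Lebesgue-type behaviour of $\widehat\mu$ away from $1_{\widehat G}$ (valid for any measure of norm one) yields the conclusion directly, without appealing to Theorem \ref{lambda1}.
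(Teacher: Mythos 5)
Your proof is correct and follows exactly the route the paper intends: apply Theorem \ref{reflexive} with $p=1$ (legitimate, since that theorem is stated for the full range $1\le p\le\infty$) to see that if $1$ were isolated in $\sigma(\lambda_1(\mu))$ then, together with $\|\mu\|=1$, the operator $\lambda_1(\mu)$ would be uniformly mean ergodic and in particular mean ergodic, contradicting Theorem \ref{lambda1} for a probability measure with $H_\mu$ noncompact. Your closing aside about the Abelian case is inessential to the statement and should not be leaned on as written (the ``Riemann--Lebesgue-type'' behaviour you invoke fails for general norm-one measures, whose Fourier--Stieltjes transforms need not vanish at infinity; the paper's alternative argument in that setting goes through duality theory instead), but this does not affect the proof of the corollary itself.
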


  Theorem \ref{reflexive} leads to  a complete characterization  in the non reflexive case that is  valid  for, at least, all Abelian $G$.
  \begin{theorem}
  \label{nonreflexive}
  Let $G$ be a locally compact group and let $\mu\in M(G)$ be normal and positive. The following are equivalent
  \begin{enumerate}
  \item $\lambda_1(\mu)$ is uniformly mean ergodic.
   \item $\lambda_\infty(\mu)$ is uniformly mean ergodic.
   \item Either $\|\mu\| <1$,  or  $\norm{\mu}=1$,  $H_\mu$ is compact and 1 is not an accumulation point of $\sigma(\mu)$.
   \item $\lambda_\infty(\mu)$ is mean ergodic.
  \end{enumerate}
  \end{theorem}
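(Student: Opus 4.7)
The plan is to reduce everything to the critical case $\|\mu\|=1$ by first disposing of $\|\mu\|>1$ and $\|\mu\|<1$, and then to close the circle of equivalences via duality between $\lambda_1$ and $\lambda_\infty$.

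If $\|\mu\|>1$, positivity gives $\lambda_\infty(\mu)(\Cf_G)=\|\mu\|\,\Cf_G$, so the constants $\lambda_\infty(\mu)_{[n]}(\Cf_G)$ diverge in $L^\infty$, ruling out (2) and (4); meanwhile $\|\lambda_1(\mu)^n\|=\|\mu\|^n$ by Theorem~\ref{young}(3), which with Theorem~\ref{norms}(2) rules out (1); and (3) is evidently false. If $\|\mu\|<1$, (3) holds trivially; Proposition~\ref{reduccionprobabilidad}(1) delivers (1); and $r(\lambda_\infty(\mu))\leq\|\mu\|<1$ together with Theorem~\ref{norms}(1) delivers (2), whence also (4).

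Assume now $\|\mu\|=1$. I would first verify by a direct Fubini computation, using only left-invariance of Haar measure so that no modular function enters, the adjoint identity
\[
\lambda_\infty(\mu) \;=\; \lambda_1(\check{\mu})^{\ast},
\]
where $\check{\mu}$ is the image of $\mu$ under inversion. Since $\check{\mu}$ is again positive and operator-normal, with $H_{\check{\mu}}=H_\mu$ and $\sigma(\check{\mu})=\sigma(\mu)$, condition (3) is invariant under $\mu\leftrightarrow\check{\mu}$. Because adjoints preserve the operator norm of Ces\`aro means, uniform mean ergodicity transfers between $\lambda_\infty(\mu)$ and $\lambda_1(\check{\mu})$ in both directions; the backward direction uses that a norm-Cauchy sequence of adjoints on $L^\infty(G)$ forces a norm-Cauchy sequence of preadjoints on $L^1(G)$, whose limit is again an adjoint. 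The equivalence (2)$\Leftrightarrow$(4) then reduces to Lotz's theorem (Theorem~\ref{Lotz85}): one direction is trivial, and for the other, $\|\lambda_\infty(\mu)^n\|\leq 1$ gives $\|\lambda_\infty(\mu)^n\|/n\to 0$.

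For (1)$\Leftrightarrow$(3) I argue as follows: uniform mean ergodicity in (1) implies mean ergodicity, and since $\mu$ is now a probability measure, Theorem~\ref{lambda1} forces $H_\mu$ compact, so $H_\mu$ is amenable; Theorem~\ref{reflexive} combined with $\sigma(\lambda_1(\mu))=\sigma(\mu)$ then yields that $1$ is not an accumulation point of $\sigma(\mu)$. Conversely (3) plugs directly into Theorem~\ref{reflexive} with $p=1$ to recover (1). Applying the same reasoning to $\check{\mu}$ and invoking the duality above gives (2)$\Leftrightarrow$(3), closing the loop. The main obstacle is the backward direction of this duality — transferring uniform mean ergodicity from $\lambda_\infty(\mu)$ back to its preadjoint $\lambda_1(\check{\mu})$ in a non-reflexive setting; concretely one notes that $\lambda_\infty(\mu)_{[n]}=\lambda_1(\check{\mu})_{[n]}^{\ast}$ already, so norm convergence on $L^\infty(G)$ forces norm convergence of the preadjoints on $L^1(G)$, and the two limits are adjoints of one another. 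Everything else amounts to bookkeeping together with the standard facts $\sigma(\lambda_1(\mu))=\sigma(\mu)$ and $\sigma(\check{\mu})=\sigma(\mu)$.
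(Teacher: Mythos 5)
Your proof is correct and follows essentially the same route as the paper's: the adjoint identity $\lambda_\infty(\mu)=\lambda_1(\check{\mu})^\ast$ (the paper writes $\lambda_1(\mu)^\ast=\lambda_\infty(\mu^\ast)$), invariance of $H_\mu$ and $\sigma(\mu)$ under the involution, the fact that uniform mean ergodicity passes between an operator and its adjoint, Theorem \ref{lambda1}, Theorem \ref{reflexive} (equivalently \ref{compactume}), and Lotz's theorem for (2)$\Leftrightarrow$(4). Your explicit case split on $\norm{\mu}$ --- in particular disposing of $\norm{\mu}>1$ separately, where $\norm{\lambda_\infty(\mu)^n}/n\not\to 0$ and Lotz's theorem would not apply directly --- only makes explicit what the paper's terse proof leaves implicit.
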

  \begin{proof}
  Statement (2) is equivalent to Statement (4) by Theorem \ref{Lotz85}.  The remaining equivalences follow from   Theorem \ref{compactume} and Theorem \ref{lambda1},  observing that  $\lambda_1(\mu)^*=\lambda_\infty(\mu^*)$, $H_\mu=H_{\mu^*}$ and $\sigma(\mu)=\sigma(\mu^*)$ and taking into account that,  for a bounded linear operator on a Banach space $X$, the uniform ergodicity of $T$ is equivalent to that of $T^*$.
   \end{proof}

   \begin{remark}  When $G$ is compact, Theorem 5.6 of \cite{brownkarawill82} can be used to find examples of positive measures with $\lambda_2(\mu)$ uniformly mean ergodic for which $\lambda_1(\mu)$ is not. The measures obtained in \cite[Theorem 5.6]{brownkarawill82} are positive, belong to $M_0(G)$ and have the property that  $\sigma(\lambda_1(\mu))$ is the whole unit disk (see Lemma 4.1 of \cite{brownkarawill82} for this). For any such $\mu$, $\lambda_2(\mu)$ is uniformly mean ergodic, Corollary \ref{ume:ab:compact}, yet $\lambda_1(\mu) $ is not, Theorem \ref{reflexive}. Note that $\lambda_1(\mu)$ is mean ergodic by Theorem \ref{lambda1}.
    \end{remark}

 \subsection{Positive measures with noncompact support}

 When the support of a probability measure $\mu$ is not contained in a compact subgroup of $G$, $\lambda_p(\mu)$ does not have fixed points and its  dynamic behaviour   is much simpler.

\begin{theorem}\label{ume=nonamen:prob}
  Let $G$ be a locally compact group and let $\mu\in M(G)$ be a probability   measure  with $H_\mu$  noncompact. If  $1<p<\infty$ , the following assertions are  equivalent.
 \begin{enumerate}
   \item $\lambda_p(\mu)$ is uniformly mean ergodic.
   \item $r(\lambda_p(\mu))<1$.
   \item $(\lambda_p(\mu^n))_n$ is norm convergent to 0.
       \item $H_\mu$ is not amenable.
 \end{enumerate}
\end{theorem}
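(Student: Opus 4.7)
The plan is to establish the cycle of implications $(4)\Rightarrow(2)\Rightarrow(3)\Rightarrow(1)\Rightarrow(4)$, assembling results already proved in the paper. No genuinely new work is needed; each step is a one-line application of a previous theorem.

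First I would handle $(4)\Rightarrow(2)$. Since $\mu$ is a probability measure, $\norm{\mu}=1$ and $\norm{\lambda_p(\mu)}\leq 1$ by Theorem \ref{young}, so certainly $r(\lambda_p(\mu))\leq 1$. If $H_\mu$ is not amenable, Theorem \ref{cornormamen}(2) rules out the equality $r(\lambda_p(\mu))=\norm{\mu}=1$, forcing $r(\lambda_p(\mu))<1$.

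Next, $(2)\Rightarrow(3)$ is exactly the parenthetical conclusion of Theorem \ref{norms}(\ref{r(T)}): $r(\lambda_p(\mu))<1$ implies $\lambda_p(\mu)^n=\lambda_p(\mu^n)$ converges to $0$ in the operator norm. From $\lambda_p(\mu^n)\to 0$ in norm it follows by a standard Cesàro argument that $\lambda_p(\mu)_{[n]}\to 0$ in norm as well, giving $(3)\Rightarrow(1)$.

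The only step that requires combining two prior results is $(1)\Rightarrow(4)$, which I would argue by contradiction. Suppose $\lambda_p(\mu)$ is uniformly mean ergodic and $H_\mu$ is amenable. Since $\norm{\mu}=1$ and $H_\mu$ is noncompact, Proposition \ref{con+lin:mu} (whose hypotheses are met because $\norm{\mu}\leq 1$ and $H_\mu$ is noncompact) yields $1\notin\sigma(\lambda_p(\mu))$. On the other hand, amenability of $H_\mu$ together with $\mu\geq 0$ lets us invoke Theorem \ref{cornormamen}(1), giving $1=\norm{\mu}\in\sigma(\lambda_p(\mu))$, a contradiction. Hence $H_\mu$ cannot be amenable.

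There is no real obstacle here; the statement is essentially a corollary of Proposition \ref{con+lin:mu}, Theorem \ref{cornormamen}, and Theorem \ref{norms}. The only point that requires mild care is ensuring the hypotheses of Proposition \ref{con+lin:mu} are in force in the implication $(1)\Rightarrow(4)$, which they are since $\mu$ is a probability measure with noncompact $H_\mu$.
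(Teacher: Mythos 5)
Your cycle $(4)\Rightarrow(2)\Rightarrow(3)\Rightarrow(1)$ is fine and matches what the paper does implicitly: $(4)\Leftrightarrow(2)$ via Theorem \ref{cornormamen}(2) (which is genuinely a statement about $H_\mu$, so no reduction is needed there), $(2)\Rightarrow(3)$ via Theorem \ref{norms}(1), and $(3)\Rightarrow(1)$ by the trivial Ces\`aro argument. The problem is the step $(1)\Rightarrow(4)$. There you invoke Theorem \ref{cornormamen}(1) to conclude $1=\norm{\mu}\in\sigma(\lambda_p(\mu))$ from ``amenability of $H_\mu$ together with $\mu\geq 0$''. But Theorem \ref{cornormamen}(1) is stated under the hypothesis that the \emph{ambient} group $G$ is amenable, not that $H_\mu$ is; and the operator in your contradiction is $\lambda_p(\mu)$ acting on $L^p(G)$ (it must be, since that is where Proposition \ref{con+lin:mu} gave you $1\notin\sigma(\lambda_p(\mu))$). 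So the hypotheses of the result you cite are not in force, and the step does not follow from the ingredients you list. Note that what you do get for free from amenability of $H_\mu$, namely $r(\lambda_p(\mu))=\norm{\mu}=1$ via \ref{cornormamen}(2), is not by itself a contradiction with uniform mean ergodicity: the spectrum could a priori meet the unit circle away from $1$ (compare Example \ref{ex:radnonp}).

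This is exactly the issue the paper's Appendix is designed to handle: the paper's proof begins by invoking Corollary \ref{adapted} to reduce to the case $G=H_\mu$ (uniform mean ergodicity, norms and spectral radii transfer between $\lambda_p^{\G}(\mu)$ and $\lambda_p^{\hh}(\mu)$), after which $G$ itself is amenable and Theorem \ref{cornormamen}(1) applies legitimately, contradicting $1\notin\sigma(\lambda_p(\mu))$ from Proposition \ref{con+lin:mu}. To repair your argument you must either insert this reduction, or replace the appeal to \ref{cornormamen}(1) by an argument that $1\in\sigma(\lambda_p(\mu))$ over $G$ itself --- for instance, using that $\lambda_p(\mu)$ is a positive operator on the Banach lattice $L^p(G)$, so its spectral radius (which equals $1$ by \ref{cornormamen}(2)) lies in its spectrum; but that is an external fact not established in the paper, whereas the intended route is the Appendix reduction.
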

 \begin{proof}
 According to  Corollary \ref{adapted}, we can  assume along this proof that $G=H_\mu$. By Theorem \ref{cornormamen}, this immediately shows that Statements (2) and (4) are equivalent .

As for the remaining equivalences, only  that  (1) implies (4) needs proof. Assume to that end that  $\lambda_p(\mu)$ is uniformly mean ergodic.
Taking into account that, by Proposition \ref{con+lin:mu},  $1\notin \sigma(\lambda_p(\mu))$  Theorem \ref{cornormamen}, applies then to show that (4) holds.
\end{proof}
For convolution operators of norm at most one induced by measures  $\mu=f\dmg$ for some positive $f\in L^1(G)$, uniform mean ergodicity can be neatly characterized for all $p$, $1\leq p<\infty$.
\begin{theorem}\label{l1caracspect}
Let  $G$ be a locally compact group and  let $f\geq 0$, $f\in L^1(G)$, with $\norm{f}\leq 1$. Then $\lambda_p(f)$,  $1\leq p<\infty$, is  uniformly mean ergodic if and only if either $\supp(f)$ is contained in a compact subgroup of $G$ or  $r(\lambda_p(f))<1$.
\end{theorem}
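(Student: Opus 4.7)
The plan is to split the characterization into its two directions and reduce each half to results already proved in the paper. The sufficiency will be handled either directly by the spectral radius bound or, after transferring to $L^p(H_\mu)$ via the Appendix (Corollary \ref{adapted}), by combining mean ergodicity with compactness. The necessity will be reduced to Theorem \ref{ume=nonamen:prob} (for $1<p<\infty$) together with Theorem \ref{lambda1} (for $p=1$).

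For sufficiency, I first dispatch the case $r(\lambda_p(f))<1$ by a direct appeal to Theorem \ref{norms}(\ref{r(T)}). So assume $\supp(f)\subseteq K$ for some compact subgroup $K$ of $G$; then the closed subgroup $H_\mu$ generated by $\supp(f)$ lies in $K$ and is therefore compact. Using Corollary \ref{adapted} to relate uniform mean ergodicity of $\lambda_p(f)$ on $L^p(G)$ with that of $\lambda_p(f)$ on $L^p(H_\mu)$, I may assume $G=H_\mu$ is compact. On the compact group $H_\mu$, $\lambda_p(f)$ is power bounded (since $\|\lambda_p(f)^n\|\le\|f\|_1^n\le 1$) and compact: trigonometric polynomials (finite linear combinations of matrix coefficients of irreducible representations) are dense in $L^1(H_\mu)$, convolution with each such polynomial is a finite-rank operator on $L^p(H_\mu)$ by Schur orthogonality, and Young's inequality $\|\lambda_p(g)\|\le\|g\|_1$ turns this into an approximation of $\lambda_p(f)$ by finite-rank operators in operator norm. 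Theorem \ref{ec0} then yields mean ergodicity, and Theorem \ref{norms}(\ref{compactme}) (Yosida--Kakutani) upgrades the conclusion to uniform mean ergodicity.

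For necessity, suppose $\lambda_p(f)$ is uniformly mean ergodic and $\supp(f)$ is not contained in any compact subgroup, so $H_\mu$ is non-compact; the goal is to show $r(\lambda_p(f))<1$. If $\|f\|_1<1$ we are done at once, since $r(\lambda_p(f))\le\|\lambda_p(f)\|\le\|f\|_1<1$. Otherwise $\|f\|_1=1$ and $\mu=f\cdot\mg$ is a probability measure on $G$. For $p=1$, Theorem \ref{lambda1} forces $\lambda_1(\mu)$ to fail mean ergodicity, contradicting the uniform mean ergodicity assumption; so this case does not occur. For $1<p<\infty$, Theorem \ref{ume=nonamen:prob} directly identifies uniform mean ergodicity with $r(\lambda_p(f))<1$, delivering the desired conclusion.

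The steps are all short modulo two ingredients that carry the technical weight: the Appendix transfer (Corollary \ref{adapted}) must be known to preserve uniform mean ergodicity across the full range $1\le p<\infty$, and the compactness of $\lambda_p(f)$ for $f\in L^1$ on a compact group requires the Peter--Weyl-type approximation just sketched. Of these, the compactness claim (particularly at the endpoint $p=1$, where Hilbert-space shortcuts are unavailable) is the point where I expect the argument to deserve the most care; once it is in hand, everything else is a bookkeeping assembly of prior results.
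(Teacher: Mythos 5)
Your argument is correct and follows essentially the same route as the paper's proof: necessity via Theorems \ref{ume=nonamen:prob} and \ref{lambda1} (with the trivial case $\norm{f}<1$ handled by the norm bound), sufficiency for $r(\lambda_p(f))<1$ by Theorem \ref{norms}, and sufficiency in the compact-support case by passing to $H_f$, combining mean ergodicity with compactness and power boundedness through Yosida--Kakutani, and transferring back with Corollary \ref{adapted}. The only genuine differences are cosmetic: you prove compactness of $\lambda_p(f)$ on the compact subgroup by a Peter--Weyl finite-rank approximation where the paper cites \cite[Exercise 10.4.2]{dalesetal}, you invoke Theorem \ref{ec0} where the paper uses Theorem \ref{lambda1} and Corollary \ref{MEth} directly, and the reliance on the appendix transfer at $p=1$ (Corollary \ref{adapted} being stated only for $1<p<\infty$), which you rightly flag, is present in the paper's own proof as well.
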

\begin{proof}
After Theorems \ref{ume=nonamen:prob}  and \ref{lambda1}, it only remains to prove that $\lambda_p(f)$ is uniformly mean ergodic when the support of $f$ is contained in a compact subgroup of $G$.

The operator $\lambda^{H_f}_p(f)$ obtained by regarding $\lambda_p(f) $ as an operator on  $L^p(H_f)$,   is mean ergodic. This follows from Theorem \ref{lambda1} when $p=1$ and from  the Mean Ergodic Theorem, Corollary \ref{MEth} in the reflexive case $p>1$.

Since convolution with  a function in  $L^1(H_f)$ defines  a compact operator on $L^p(H_f)$ (see, e.g.,  \cite[Exercise 10.4.2]{dalesetal}) and, for power bounded compact operators,  mean ergodicity is equivalent to uniform mean ergodicity, \ref{compactme} of Theorem \ref{norms}, we have that  $\lambda^{H_f}_p(f)$ is a uniformly mean ergodic operator.
Corollary \ref{adapted} then shows that  $\lambda_p^G(f)$ is uniformly mean ergodic as well.
\end{proof}
The simple example in Remark  \ref{r4.13} shows that when $\mu\notin L^1(G)$, $H_\mu$ compact does not necessarily imply that $\lambda_p(\mu)$ is
 a uniformly mean ergodic operator.

The characterization of Theorem \ref{ume=nonamen:prob} leaves room for a convolution operator  $\lambda_p(\mu)$, with $\mu $ positive  and $H_\mu$ noncompact, to be uniformly  mean ergodic and yet $r(\lambda_p(\mu))=1$.  We see below that this cannot happen when $H_\mu$ is amenable.
 \begin{corollary}\label{ume=<1:amen}
 Let $G$ be a locally compact group and let $\mu\in M(G)$ be a \emph{positive} measure  with $H_\mu$ amenable and noncompact. If $1< p<\infty$, the following assertions are  equivalent:
 \begin{enumerate}
   \item $\lambda_p(\mu)$ is uniformly mean ergodic.
   \item  $\norm{\mu}<1$.
   \item $\lambda_p(\mu^n)$ is norm convergent to 0.
       \end{enumerate}
 \end{corollary}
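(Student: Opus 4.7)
The plan is to chain together three results already established for this situation: the identity $r(\lambda_p(\mu))=\|\mu\|$ for positive measures supported on an amenable subgroup (Theorem \ref{cornormamen}(2)), the spectral radius criterion for uniform mean ergodicity (Theorem \ref{norms}(1)--(2)), and the ``probability-measure'' version Theorem \ref{ume=nonamen:prob} applied as a contradiction tool.

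First I would prove (2)$\Rightarrow$(3). Since $\mu\geq0$ and $H_\mu$ is amenable, Theorem \ref{cornormamen}(2) gives $r(\lambda_p(\mu))=\|\mu\|<1$, and then Theorem \ref{norms}(1) supplies the norm convergence $\lambda_p(\mu^n)\to 0$. Next, (3)$\Rightarrow$(1) is essentially trivial: if $\|\lambda_p(\mu^n)\|\to 0$, then the Ces\`aro means $\lambda_p(\mu)_{[n]}=\frac{1}{n}\sum_{k=1}^{n}\lambda_p(\mu^k)$ converge in operator norm to $0$, so $\lambda_p(\mu)$ is uniformly mean ergodic.

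The only step requiring any small idea is (1)$\Rightarrow$(2). Assume $\lambda_p(\mu)$ is uniformly mean ergodic. Theorem \ref{norms}(2) yields $\|\lambda_p(\mu^n)\|/n\to 0$, whence the spectral radius formula gives $r(\lambda_p(\mu))\leq 1$; invoking Theorem \ref{cornormamen}(2) once more we obtain $\|\mu\|\leq 1$. To exclude the borderline case $\|\mu\|=1$, observe that $\mu\geq 0$ and $\mu(G)=\|\mu\|=1$ would make $\mu$ a probability measure with noncompact $H_\mu$, so Theorem \ref{ume=nonamen:prob} applies and forces $H_\mu$ to be non-amenable, contradicting the hypothesis. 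Therefore $\|\mu\|<1$.

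The main obstacle is essentially bookkeeping: one needs to notice that amenability is used twice (to identify $r$ with $\|\mu\|$, and to reach a contradiction via Theorem \ref{ume=nonamen:prob}), and that positivity is what converts the extremal case $\|\mu\|=1$ into a genuine probability measure so that the earlier characterization is available. No new technical machinery is required.
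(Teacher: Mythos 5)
Your proof is correct and follows essentially the same route as the paper: the only nontrivial implication is (1)$\Rightarrow$(2), which both you and the paper settle by combining the identity $r(\lambda_p(\mu))=\norm{\lambda_p(\mu)}=\norm{\mu}$ from Theorem \ref{cornormamen} with the necessity of $\norm{\lambda_p(\mu)^n}/n\to 0$ for uniform mean ergodicity, while (2)$\Rightarrow$(3)$\Rightarrow$(1) are immediate. Your explicit exclusion of the borderline case $\norm{\mu}=1$ via Theorem \ref{ume=nonamen:prob} (a probability measure with $H_\mu$ noncompact and amenable cannot give a uniformly mean ergodic $\lambda_p(\mu)$) spells out a step the paper compresses into its reference to Proposition \ref{reduccionprobabilidad}.
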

 \begin{proof} Only that (1) implies (2) requires proof. The equality  $\|\lambda_p(\mu)\|=\|\mu\|$ (Theorem \ref{cornormamen}) permits us to proceed as in the proof of Proposition \ref{reduccionprobabilidad} to get $\norm{\mu}<1$.
  \end{proof}
  \begin{remark}
  \label{cuadrados3}
  The above Corollary can be used to prove the converse of Corollary \ref{cuadrados} for positive measures on amenable   groups that do not contain nontrivial compact subgroups (as, e.g., $G=\R$ or $G=\Z$).
  For Abelian $G$, more is true. In that case, it is not difficult to prove (relying on the positive-definiteness of $\widehat{\mu}$) that $-1$ is an accumulation point of $\widehat{\mu}(\widehat{G})$ if and only if $1$ is. It follows that for a probability measure $\mu \in M(G)$, $\lambda_2(\mu)$ is uniformly mean ergodic if and only if $\lambda_2(\mu^2)$ is. Positivity of $\mu$ is essential here, see Remark \ref{cuadrados2}.
  \end{remark}

\section{Tracing limits}\label{sec:trac}

  \subsection{Counterexamples for nonpositive measures}

 When $G$ is abelian and $\mu$ positive, Theorem \ref{pb=me:amen} and Theorem \ref{lambda1} in Section \ref{mepositive}  characterize  the mean ergodicity of $\lambda_p(\mu)$, for $1\leq p<\infty$. Also under the same assumptions,  Theorem \ref{reflexive} and Theorem \ref{nonreflexive} in Section \ref{sec:posUME}  give a complete characterization of the uniform mean ergodicity of $\lambda_p(\mu)$, $1\leq p\leq \infty$. In this section we give examples showing that these characterizations are not longer true when $\mu$ is not required to be positive.

Since our counterexamples will be Abelian, we recall from subsection \ref{sec:2.3} that, for Abelian  $G$, $\lambda_2(\mu)$ is unitarily equivalent to the multiplication operator by $\widehat{\mu}$ on $L^2(\widehat{G})$. As a consequence, the spectrum of $\lambda_2(\mu)$ is exactly $\overline{\widehat{\mu}(\widehat{G})}$.


The example below shows that Theorem \ref{ume=nonamen:prob}  fails if $\mu$ is not assumed to be positive, even in the Hilbert case, i.e., that $\lambda_2(\mu)$ can be  uniformly mean ergodic even if $\|\mu\|=1$.
 \begin{example}\label{ex:radnonp}
 A measure $\mu\in M(\Z)$ whose support generates $\Z$,  $r(\lambda_2(\mu))=1$ and yet $\lambda_2(\mu)$ is  uniformly mean ergodic. Moreover $(\lambda_2(\mu^n))_n$ is not norm convergent.
 \end{example}
 \begin{proof}
 Take   $\mu=(1/2)(\delta_1-\delta_2)   $.
 By Corollary \ref{hilbert:ume},  $\lambda_2(\mu)$ will be uniformly mean ergodic if and only if its spectrum is contained in the unit disc and does not contain $1$ as an accumulation point.

  In this case, for every $0\leq t< 2\pi$,
   \[\widehat{\mu}(e^{it})=\frac{1}{2}\left(
 e^{it}-e^{2it}\right)=\frac{1}{2}e^{it}\left(1-e^{it}\right).\]
 For $\widehat{\mu}(e^{it})=1$ one needs that $|1-e^{it}|=2$ and this only happens when $t=\pi$, but in that case $\widehat{\mu}(e^{it})=-1$. Hence $r(\lambda_2(\mu))=1$ but $1\notin \sigma(\lambda_2(\mu))$.

  Since $\widehat{\mu}^n(e^{it})=(-1)^n$ for each $n\in\N$ and $\norm{\lambda_2(\mu^n)}
  =r(\lambda_2(\mu^n))=r(\lambda_2(\mu))^n=1$, $(\lambda_2(\mu^n))_n$ cannot be  norm convergent to 0.  Observe that $\lambda_2(\mu)$ does not have non null fixed points by Theorem \ref{nocompact2}. We conclude that $(\lambda_2(\mu^n))_n$ cannot converge in norm.
   \end{proof}
   \begin{remark}
  \label{cuadrados2}
  In the above example $1\in \sigma(\lambda_2(\mu^2))$, hence $\lambda_2(\mu^2)$ is not uniformly mean ergodic. This shows that, unlike the positive case, the converse to Corollary 4.2  is not  true when $\mu$ is not positive.
    \end{remark}
The  following example of a measure $\mu$ with $\norm{\mu}>1$, $H_\mu=\Z$   and $\lambda_p(\mu) $ uniformly mean ergodic for every $p$, reveals that positivity of $\mu$ is not a disposable condition in Proposition \ref{reduccionprobabilidad}, Theorem \ref{ec0},  Theorem  \ref{nonreflexive} or Corollary \ref{ume=<1:amen}.

%
%
%
%

\begin{example}
\label{7.3}
Let $\mu=\frac13(\delta_1+\delta_0-\delta_{-1})\in M(\Z)(=l_1(\Z))$. For each $1< t<3/\sqrt{7}$, $\norm{t\mu}>1$ and  $\|\lambda_p(t\mu)^n)\|$ is  convergent to 0 and, hence, $\lambda_p(t\mu)$  is uniformly mean ergodic, for $1\leq p\leq \infty$.
\end{example}
\begin{proof}
We only have to observe that $\norm{\mu^2}=7/9$. As a consequence, as long as $t<3/\sqrt{7}$, $\lim_n\norm{t\mu^n}=0$.
\end{proof}

  \subsection{Uniformly mean ergodic convolution operators induced by  positive measures with \emph{large} support}
  We have seen in Theorem \ref{hilbert} that for an operator-normal measure $\mu\in M(G)$,   $\lambda_2(\mu)$ is  mean ergodic if and only if $\norm{\lambda_2(\mu)}\leq 1$. The same equivalence is shown to hold when $\mu$ is positive and supported in an amenable subgroup of $G$, Theorem \ref{pb=me:amen}, this time for $\lambda_p(\mu)$ and  $1<p<\infty$. In this latter case  mean ergodicity of $\lambda_p(\mu)$ is, in addition,   equivalent to vague-ergodicity of $\mu$.

  Here we show that,  when the measure is positive but not operator-normal and $H_\mu$ is not amenable, none of these equivalences remains true.  Our examples will consist on convolution operators defined by finitely supported measures in discrete groups.
We first collect some facts on  $\norm{\lambda_2(\mu)}$ for $\mu=\frac{1}{|S|}\sum_{s\in S} \delta_s$ with $S\subseteq G$.

Recall that elements of the free group $F(X)$ on the set of generators $X$ can be described uniquely as  words of the form $w=x_1^{\epsilon_1}\cdots x_2^{\epsilon_2}\cdot \cdots\cdot x_n^{\epsilon_n}$ with $x_i\in X$, $\epsilon_i=\pm 1$ and  $\epsilon_i=\epsilon_j$ whenever $x_i=x_j$. The length of $w$ is then  the minimum number of terms in such an expression of $w$. When all the exponents are positive, then $w$ belongs to the semigroup generated by $X$.
  \begin{theorem}\label{opnorms}
  Let $G$ be a discrete group and let $S\subseteq G$ with $|S|=n$. Consider
  $\mu=\sum_{s\in S} \delta_s\in M(G)$.
   Then:
  \begin{enumerate}
    \item (Particular case of \cite[Theorem 18.3]{pier}) $r(\lambda_2(\mu))=n$ if and only if $\<{S}$ is amenable. If $S$ contains the identity, $\norm{\lambda_2(\mu)}=n$ if and only if $\<{S}$ is amenable.
        \item (\cite[Theorem IV.K]{akemostr76}) If $S$ generates a free group:
         $\norm{\lambda_2(\mu)}=2\sqrt{n-1}$.
    \item (Haagerup inequality, \cite{haag79}) If   $G=F(X)$ is a free group and $S$ consists of  words of length $n$ on $X$, then: ${\displaystyle\norm{\lambda_2(\mu)}_{\mbld}\leq (n+1) \norm{\mu}_{l_2}}$.
        \item  (strong Haagerup inequality, \cite{kempspei07})  If   $G$ is a free group and $S$ consists  of words of length $n$ that are  in the semigroup generated by $X$, then ${\displaystyle\norm{\lambda_2(\mu)}_{\ll(L^2(G))}\leq e \sqrt{n+1}\norm{\mu}_{\ell^2}}$.
    \end{enumerate}  \end{theorem}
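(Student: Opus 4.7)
The plan is that since each of the four statements is credited in the statement itself to its original source, the proof essentially reduces to checking that our notational conventions (in particular, the unnormalized form $\mu=\sum_{s\in S}\delta_s$ rather than a probability normalization) match those used in the references, and then invoking each result. I will briefly outline the conceptual content of each part to justify this.

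For (1), I would invoke Kesten's theorem: a finitely generated discrete group is amenable if and only if the spectral radius of the convolution operator by a symmetric probability measure whose support generates it equals $1$. After passing to $H_\mu=\langle S\rangle$ (via the reduction in Corollary \ref{adapted}), the equality $r(\lambda_2(\mu))=n=\norm{\mu}$ becomes Kesten's amenability condition applied to $H_\mu$; this is exactly Theorem 18.3 of \cite{pier}. When $e\in S$, the measure $\mu$ produces a positive self-adjoint convolutor on $\ell^2(G)$, so $\norm{\lambda_2(\mu)}=r(\lambda_2(\mu))$, which upgrades the spectral radius equality to an operator norm equality.

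For (2), the argument is to identify the Cayley graph of the free group of rank $n$ with the $2n$-regular tree and apply standard spectral theory for adjacency operators on regular trees; the sharp constant $2\sqrt{n-1}$ is precisely the Akemann--Ostrand computation \cite{akemostr76}. Parts (3) and (4) are the Haagerup inequality and its strong (holomorphic) refinement. Both rest on the combinatorics of reduced words in the free group and careful decomposition of convolutors supported on spheres in the word-length metric; the improvement from $(n+1)$ in (3) to $e\sqrt{n+1}$ in (4) reflects the smaller cardinality of \emph{positive} (semigroup) words of a given length, and is due to Kemp and Speicher \cite{kempspei07}.

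The main obstacle in writing a fully self-contained proof is the substantial operator-algebraic machinery behind the Haagerup-type inequalities and behind Kesten's theorem; since none of these results is original to this paper, the plan is simply to invoke the stated references after the above matching of conventions.
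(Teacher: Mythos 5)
The paper contains no proof of Theorem \ref{opnorms}: it is stated as a compilation of known results, and the listed citations \emph{are} the argument. So your plan of matching conventions and invoking the references is exactly the paper's approach, and for items (2)--(4) your outline is an accurate description of the sources.

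There is, however, one incorrect step in your gloss on item (1). The hypothesis $e\in S$ does not make $\lambda_2(\mu)$ self-adjoint: since $\lambda_2(\mu)^\ast=\lambda_2(\mu^\ast)$ and $\mu^\ast=\sum_{s\in S}\delta_{s^{-1}}$, self-adjointness requires $S=S^{-1}$, and for a general $S$ containing the identity (say $S=\{e,a,b\}$ with $a,b$ free) the operator is not even normal, so the identity $\norm{\lambda_2(\mu)}=r(\lambda_2(\mu))$ that you use to ``upgrade'' the spectral-radius statement is unavailable. Note also that $e\in S$ is not a formality in the norm clause: for $S=\{a,b\}$ a free basis, item (2) gives $\norm{\lambda_2(\mu)}=2\sqrt{n-1}=2=n$ although $\<{S}=F_2$ is not amenable, so the norm criterion fails without it. The clause is part of what is being cited from \cite{pier}; if you want an argument in the spirit of the paper, pass to $\nu=\mu^\ast\ast\mu$, which is positive and Hermitian with $\supp\nu=S^{-1}S$: if $\norm{\lambda_2(\mu)}=n$, then $r(\lambda_2(\nu))=\norm{\lambda_2(\nu)}=n^2=\norm{\nu}$, so $H_\nu$ is amenable by Theorem \ref{cornormamen}(2), and $e\in S$ forces $H_\nu=\<{S}$; the converse direction needs no hypothesis on $e$, since $r(\lambda_2(\mu))\le\norm{\lambda_2(\mu)}\le\norm{\mu}=n$. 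Finally, for the first sentence of (1) you need not appeal to Kesten's theorem for symmetric probability measures (your $\mu$ is in general neither symmetric nor normalized): it is precisely Theorem \ref{cornormamen}(2) of the paper applied to the positive measure $\mu$ with $\norm{\mu}=n$, after the reduction to $H_\mu=\<{S}$ that you correctly take from Corollary \ref{adapted}.
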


  This is the promised counterexample to     Theorem  \ref{pb=me:amen} for positive measures   which are not supported in amenable subgroups. This example also shows that the hypothesis  of normality of the measure  cannot be dropped to get the necessity of $\|\lambda_2(\mu)\|\leq 1$ when $\lambda_2(\mu)$ is uniformly mean ergodic in Theorem \ref{hilbert}, even if  $\mu$ is positive.
    \begin{example}\label{ex:free}
 Let $G$  be the free group on three generators  $G=F(x_1,x_2,x_3)$. There is a   finitely supported positive measure $\nu\in M(G)$ such that:
 \begin{enumerate}
   \item  $ \norm{\nu}>\norm{\lambda_2(\nu)}>1$.
   \item $\lambda_2(\nu)$ is uniformly mean ergodic.
   \item $\nu$ is not vague-ergodic.
     \end{enumerate}
    \label{ex:spec<norm}\end{example}
    \begin{proof}
 Let $\mu=(\delta_{x_1}+\delta_{x_2}+\delta_{x_3})$  and define, and for $\frac{1}{\sqrt{8}}<r<\frac{1}{\sqrt{3}}$, $\nu=r \mu$.

 %

 Observe to begin with that, for each  $n\in \N$, the measures $\mu^{n}$    is precisely the  characteristic function of the set of all words of length  $n$ of the free semigroup generated by  $\{x_1,x_2,x_3\}$.

  Since, clearly, $\norm{\nu^{ n}}_{\ell^2}=(\sqrt{3}r)^n$ we deduce from the strong Haagerup inequality ((4) of Theorem \ref{opnorms}) that
  \[
  \norm{\lambda_2(\nu^{n})}\leq \left(\sqrt{3}r\right)^ne \sqrt{n+1}.
   \]
  The spectral radius formula together with the choice of $r$ yields
   \[
  \label{radmur} r(\lambda_2(\nu))\leq \sqrt{3}r<1\]
  Therefore, by \ref{r(T)} of Theorem \ref{norms}, $\lambda_2(\nu)$ is uniformly mean ergodic ($(\lambda_2(\nu^n))_n$ is even  convergent to 0)

  On the other hand,  (2) of Theorem \ref{opnorms} shows that
  \[
  \label{normao}
\norm{\lambda_2(\nu})= \sqrt{8} r>1.
  \]

 Finally, Since $\norm{\nu}=3r>\sqrt{8}r>1$ we get

\[\lim_n\frac{\norm{\nu^{n}}}{n}=\lim_n\frac{(3r)^n}{n}= \infty,\]
and we see that $\mu$ cannot be vague-ergodic.%
%
   \end{proof}
\section{Open questions}
We remark that all our  examples  of mean ergodic operators are power bounded. It is natural to conjecture a positive answer to the following problem.

{\bf Problem 1.} Let $G$ be a locally compact group. Let  $\mu\in M(G)$ and $1<p<\infty$. Is it true that $\lambda_p(\mu)$ is power bounded whenever it is mean ergodic? What if $\mu\geq 0$? What if $H_\mu$ is amenable?

Examples \ref{7.3} and \ref{ex:free}  both  introduce {\em big} measures with $(\lambda_2(\mu^n))$ convergent in norm to 0, it seems also natural to ask if an  example in the spirit of \ref{ex:radnonp} can be obtained for  positive measures (necessarily on non-amenable groups).

{\bf Problem 2.}  Let $G$ be a free group (or any other nonamenable locally compact group). Is there any positive measure $\mu\in M(G)$ such that $\mu(G)>1$, $\lambda_2(\mu)$ is mean ergodic and $r(\lambda_2(\mu))=1$? If the answer is positive, could $\mu$ be taken  in such a way that also $\lambda_2(\mu)$ is uniformly mean ergodic?

Our last question refers to possible generalizations of Theorem \ref{pb=me:amen}  to nonpostive measures or nonamenable groups.

{\bf Problem 3.}  Is there a  locally compact group $G$ supporting a measure which is vague ergodic but $\lambda_p(\mu)$ is not mean ergodic? A positive answer would provide a convolution operator which is weakly mean ergodic but not mean ergodic.

  \section*{Appendix. Convolution operators $\lambda_p(\mu)$  as operators on $L^p(H_\mu)$: reducing to the support}\renewcommand{\thesection}{A}
If $H$ is a closed subgroup of $G$ with $H\subseteq H_\mu$, convolving by $\mu$ can be seen both as an operator  on $L^p(H)$ and as an operator on $L^p(G)$. 
Our aim is to show that  the ergodic behaviour of both operators is, as expected, the same.
The proofs of these results is rather technical and rely on several involved results of abstract harmonic analysis. We have therefore preferred to defer their  proof to this Appendix.


These facts are best described when  convolution operators by measures are seen in the wider frame of algebras of $p$-pseudomeasures. The algebra $PM_p(G)$ of $p$-pseudomeasures  is defined as the weak-operator closure of $\{\rho_p(f)\colon f\in L^1(G)\}$ in $\ll(L^p(G))$. $PM_p(G)$ is a Banach subalgebra of  $\ll(L^p(G))$ that contains $\rho_p(\mu)$ for every $\mu\in M(G)$.   It is easy to see that operators in $PM_p (G)$ commute with left translations.

As a Banach space, the algebra $PM_p(G)$ can be seen as the dual space of a function algebra $A_p(G)$ known as the Fig\`a-Talamanca Herz algebra.  We will not need to provide a precise defintion of  this algebra  here. It suffices to say that for any $f\in L^p(G)$ and $g\in L^{p^\prime}(G)$, with $1/p+1/p^{\prime}=1$, the convolution $\bar{f}\ast \check{g}\in A_p(G)$, where $\bar{f}(s)=\overline{f(s)}$ and $\check{f}(s)=f(s^{-1})$,   and that, for each $T\in PM_p(G)$,
$\langle T, \bar{f}\ast \check{g}\rangle=\langle Tf, g\rangle$, where the first bracket corresponds to the $(PM_p(G),A_p(G))$-duality and the second to the $(L^p(G),L^{\pp}(G))$-duality.

In our setting, it  would have been more natural to introduce   the algebra of pseudomeasures as the weak-operator closure of  $\{\lambda_p(f)\colon f\in L^1(G)\}$, as it is  often done in the literature. This would have produced a different but linearly isometric algebra. By technical reasons related to Theorem  \ref{fromhtog} we find it preferable to use the right-handed version here.

Since in this section we are going to see  the operator $\rho_p(\mu)$, $\mu\in M(H)$,   both as an  operator on  $L^p(G)$ and as   an operator on $L^p(H)$,  where $H$ is a subgroup of $G$, it will be convenient to use the notation $\rho^{\G}_p(\mu)$ and  $\rho^{\hh}_p(\mu)$ (or $\lambda^{\G}_p(\mu)$ and $\lambda^{\hh}_p(\mu)$ for the left-handed versions) to distinguish both cases.

The basic tool to explore  the relation between operators on $L^p(G)$ and $L^p(H)$ is the Mackey-Bruhat integration formula described in the next lemma.
\begin{lemma}[Mackey-Bruhat  integration formula. Remark 8.2.3 of \cite{reiter}]\label{WIF}
Let $G$ be a   locally compact group and let $H$ be a closed subgroup of $G$. There is a quasi-invariant measure $\mgh$ on the  space of left cosets $G/H$  and a continuous strictly positive function $q\colon G\to \R$ such that:
\begin{align*}
&\frac{q(xh)}{q(x)}=\frac{\Delta_{\hh}(h)}{\Delta_{\G}(h)}, \mbox{ and }\\
&\int f(x)\dmg(x)=\int_{G/H}\left(\int_H\frac{f(xh)}{q(xh)}\dmh(h)
\right)\dmgh(\dot{x}),
\end{align*}
where $\dot{x}$ denotes the right coset $xH$.
If $H$ is a normal subgroup of $G$, $q(x)=1$ for every $x\in G$.
\end{lemma}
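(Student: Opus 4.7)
The plan is to produce the rho function $q$ first, then use it to construct $\mgh$ via the Riesz representation theorem, and finally verify the integration formula and the normal subgroup case.

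First I would construct a continuous $q\colon G\to (0,\infty)$ satisfying $q(xh)/q(x)=\Delta_{\hh}(h)/\Delta_{\G}(h)$. The standard route is via a Bruhat function: pick a continuous $\varphi\colon G\to[0,\infty)$ with compact support intersected with any $HK$ ($K$ compact in $G$) and with $\int_H\varphi(xh)\dmh(h)>0$ for all $x$; then define
\[q(x)=\int_H\varphi(xh)\,\frac{\Delta_{\G}(h)}{\Delta_{\hh}(h)}\dmh(h).\]
A direct change of variable together with the translation laws for $\Delta_{\hh}$ and $\Delta_{\G}$ gives the desired cocycle identity, and continuity/strict positivity follow from the choice of $\varphi$.

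Next, I would set up the averaging map. For $f\in C_{00}(G)$ put
\[T_\hh f(\dot x)=\int_H\frac{f(xh)}{q(xh)}\dmh(h).\]
The cocycle relation for $q$ shows $T_\hh f$ is well defined on $G/H$, continuous and compactly supported, and a partition-of-unity argument (standard in Reiter or Folland) shows $T_\hh\colon C_{00}(G)\twoheadrightarrow C_{00}(G/H)$ is surjective. The positive linear functional
\[\Lambda\colon T_\hh f\longmapsto \int_G f(x)\dmg(x)\]
is then well defined on $C_{00}(G/H)$: one must check that $T_\hh f=0$ forces $\int f\dmg=0$, and this is where the cocycle identity and Fubini combine to kill the integrand modulo $H$. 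Positivity of $\Lambda$ and the Riesz representation theorem then deliver the measure $\mgh$ on $G/H$ with
\[\int f(x)\dmg(x)=\int_{G/H}\Bigl(\int_H\frac{f(xh)}{q(xh)}\dmh(h)\Bigr)\dmgh(\dot x).\]

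To get quasi-invariance of $\mgh$ I would translate by a fixed $g\in G$ inside the integration formula: replacing $f$ by $L_{g\inv}f$ and using left-invariance of $\mg$ produces a Radon--Nikod\'ym cocycle $\rho(g,\dot x)=q(gx)/q(x)$ relating $L_g^\ast\mgh$ to $\mgh$, which is precisely quasi-invariance. For the normal case, a classical lemma says $\Delta_{\G}|_\hh=\Delta_{\hh}$ whenever $H$ is normal (since conjugation by $H$ fixes Haar measure of $G$ and of $H$ up to the same scalar), so the cocycle for $q$ becomes trivial and the constant function $q\equiv 1$ is an admissible choice; the formula then reduces to Weil's classical formula for a normal quotient.

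The main technical obstacle is the well-definedness of $\Lambda$, i.e.\ showing that $T_\hh f\equiv 0$ implies $\int f\dmg=0$. This requires producing, for a given $F\in C_{00}(G/H)$, a preimage $f\in C_{00}(G)$ in a controlled way (a Bruhat section argument) and then pairing any candidate vanishing $f$ against such a preimage via Fubini; without this step one cannot invoke Riesz. Everything else---surjectivity of $T_\hh$, the cocycle identity for $q$, quasi-invariance, and the normal case---is bookkeeping once the well-definedness is in hand.
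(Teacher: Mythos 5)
Your outline is correct and is essentially the proof of this classical formula as given in the cited source (Reiter, Remark 8.2.3; the paper itself offers no proof, simply quoting it): a rho-function $q$ built from a Bruhat function, the averaging map $T_{\hh}\colon C_{00}(G)\to C_{00}(G/H)$, well-definedness of the induced functional via the symmetric Fubini/cocycle identity, Riesz representation, quasi-invariance obtained by translating inside the formula, and the normal case via $\Delta_{\G}|_H=\Delta_{\hh}$. The only loose phrasing is in the normal-subgroup step, where your parenthetical reason essentially restates the claim $\Delta_{\G}|_H=\Delta_{\hh}$; that fact is classical and can be obtained, for instance, by applying the $q\equiv 1$ quotient formula to a right translate of $f$ by $h_0\in H$ (or from multiplicativity of the module along $G\to G/H$), so nothing substantive is missing.
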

%

\begin{lemma}\label{pseduom}
Let $G$ be a locally compact group, let $H$ be a closed subgroup of $G$ and let $1<p<\infty$. Then,
\begin{enumerate}
  \item\label{x} On bounded subsets of $PM_p(G)$ the $\sigma(PM_p(G),A_p(G))$-  and weak operator topologies coincide.
  \item\label{y} Restriction defines a linear surjective mapping  $R_H\colon A_p(G)\to A_p(H)$ such that for each  $h\in A_p(H)$ and $\varepsilon>0$ there is $g\in A_p(G)$ with $\norm{h}\leq \norm{g}\leq \norm{h}+\varepsilon$ and $R_H(g)=h$.
      \item\label{z} The adjoint $(R_H)^\ast \colon PM_p(H)\to PM_p(G)$  is a multiplicative linear isometry.
          \item\label{t} If $Q\in PM_p(H)$, $f\in L^p(G)$, $x \in G$  and $h\in H$,
    \[ (R_H)^\ast(Q)f(xh)=Q f_x(h) q^{1/p}(xh).\]
    \item $(R_H)^\ast(\rho_p^H(\mu))=\rho_p^G(\mu)$.
\end{enumerate}
\end{lemma}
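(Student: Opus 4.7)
The five items build on one another; (1) and (2) are foundational, while (3)--(5) form the substantive technical core.

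For (1), the defining identity $\<{T,\bar f\ast\check g}=\<{Tf,g}$ exhibits every WOT-seminorm $T\mapsto|\<{Tf,g}|$ as a $\sigma(PM_p(G),A_p(G))$-seminorm, whence the WOT is coarser than the weak-* topology on all of $PM_p(G)$. Conversely, by construction $A_p(G)$ is the norm closure of the span of the tensors $\bar f\ast \check g$ with $f\in L^p(G)$ and $g\in L^{\pp}(G)$, and such a dense subspace determines the weak-* topology on any norm-bounded subset of the dual. Item (2) is Herz's restriction theorem (see Chapter 2 of \cite{deri11}): $R_H$ is a norm-decreasing surjection, and the $(1+\varepsilon)$-lifting is obtained by approximating with compactly supported representatives in $A_p(H)$.

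For (3), the isometric part of $(R_H)^*$ follows from (2) by the standard duality argument: $\|R_H\|\le 1$ gives $\|(R_H)^*(Q)\|\le \|Q\|$, while the approximate norm-preserving lift yields the reverse estimate. Multiplicativity of $(R_H)^*$ is more delicate; the plan is first to verify $(R_H)^*(\rho_p^H(\mu))=\rho_p^G(\mu)$ for $\mu\in M(H)$, i.e.\ statement (5), obtained as a direct consequence of (4) below, and then to extend to the whole algebra $PM_p(H)$ by density. Since the family $\{\rho_p^H(\mu):\mu\in M(H)\}$ is weak-operator dense in $PM_p(H)$, combining (1) with the joint WOT-continuity of operator composition on norm-bounded subsets of $\ll(L^p(G))$ produces $(R_H)^*(Q_1 Q_2)=(R_H)^*(Q_1)(R_H)^*(Q_2)$ by passage to the limit.

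For (4), the approach is to unfold the duality
\[\<{(R_H)^*(Q)f,g}=\<{Q,R_H(\bar f\ast\check g)}\]
on tensors with $f,g\in C_{00}(G)$ and compute $R_H(\bar f\ast \check g)$ explicitly via the Mackey-Bruhat formula (Lemma \ref{WIF}). Splitting the convolution integral into an inner integral along $H$-cosets and an outer integral over $G/H$, and distributing the cocycle $q$ as $q^{-1/p}\cdot q^{-1/p'}$ to symmetrize the $(L^p(H),L^{\pp}(H))$-duality, expresses $R_H(\bar f\ast \check g)$ fiberwise as a tensor of suitable translates of $f$ and $g$ restricted to $H$. Pairing fiberwise with $Q$ and reassembling via a second application of Mackey-Bruhat yields the stated formula, and item (5) is then a direct specialization to $Q=\rho_p^H(\mu)$. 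The main obstacle is tracking the modular functions $\Delta_G,\Delta_H$ and the density $q$ through Mackey-Bruhat so that the factor $q^{1/p}$ lands precisely where claimed; once (4) is in hand, items (3) and (5) follow by comparatively routine verification.
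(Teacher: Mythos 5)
Your treatment of items (1), (2) and the isometry half of (3) is sound, and in fact the paper does less: it simply cites Herz \cite{herz73} and Derighetti \cite[Proposition 7.3.5 and Theorem 7.8.4]{deri11} for (1)--(4) (the restriction theorem is in Chapter 7 of \cite{deri11}, not Chapter 2), and only item (5) is actually computed there, via the cocycle identity $q(xh)/q(x)=\Delta_{\hh}(h)/\Delta_{\G}(h)$ of Lemma \ref{WIF}. The genuine problem in your proposal is the multiplicativity step of (3): it rests on ``joint WOT-continuity of operator composition on norm-bounded subsets of $\ll(L^p(G))$'', which is false. Composition is only \emph{separately} WOT-continuous on bounded sets (joint continuity on bounded sets holds for the strong, not the weak, operator topology); for the unilateral shift $S$ on $\ell^2$ one has $S^n\to 0$ and $(S^\ast)^n\to 0$ in WOT while $(S^\ast)^nS^n=I$. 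So the one-shot ``passage to the limit'' fails. It can be repaired by an iterated limit: fix $Q_2=\rho_p^H(\mu_2)$, let $Q_1$ run over a \emph{bounded} net from $\rho_p^H(L^1(H))$ converging in WOT, use separate continuity together with the fact that $(R_H)^\ast$ is WOT-continuous on bounded sets (weak-$\ast$ continuity of adjoints plus your item (1)), and then repeat in the second variable. Even so, you need every $Q\in PM_p(H)$ to be the WOT-limit of a \emph{norm-bounded} net of operators $\rho_p^H(f)$, $f\in L^1(H)$ --- a Kaplansky-type density statement that is automatic for $p=2$ but for general $p$ requires knowing that the corresponding unit ball is norming for $A_p(H)$; your sketch does not supply this.

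A second, smaller caveat: nearly all the real work of the lemma sits in item (4), and your outline leaves it at the level of a plan. Restricting $\bar f\ast\check g$ to $H$ via Lemma \ref{WIF} does not give a single elementary tensor over $H$ but an integral over $G/H$ of such tensors, so ``pairing fiberwise with $Q$'' requires interchanging $Q$ with that integral (a vector-valued Fubini/duality argument); this interchange, together with the bookkeeping of $q$, $\Delta_{\G}$, $\Delta_{\hh}$, is exactly the content of \cite[Theorem 7.8.4]{deri11}, which the paper cites rather than reproves, adding only the observation that the right-hand side of (4) is unambiguous because operators in $PM_p(H)$ commute with left translations (check this if you define the operator by the formula rather than as an adjoint). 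Your derivation of (5) from (4), and of the isometry in (3) from (2), are correct and match the paper.
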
\begin{proof}
  The first item   follows, e.g., from Theorem 6 of \cite{deri11}. Items (2) and (3) can be deduced from Theorems A and   1 of \cite{herz73} and also from  \cite[Proposition 7.3.5 and Theorem 7.8.4]{deri11}.

  Item (4) follows from Theorem 7.8.4 of \cite{deri11} after noting that the operator $f(xh)\mapsto Q f_x(h) q^{1/p}(xh)$, $f\in L^p(G)$, coincides with the operator $i\colon PM_p(H)\to PM_p(G)$ introduced in Definition 7.2.7 \emph{loc. cit.} Since operators in $PM_p(H)$ commute with left translations, the formula in (4) defines this operator  unambiguously.

  Item (5) follows after applying (4) to $\rho_p^H(\mu)$ and any $f\in L^p(G)$, $x\in G$:
  \begin{align}\label{muG}
 ((R_H)^\ast(\rho^{\hh}_p(\mu)))f(xh)&=
 \rho_{\hh}(\mu)^p f_{x} (h)q^{1/p}(xh) \notag\\
 &=\int \Delta_{\hh}(u)^{1/p}\frac{ f(xhu)}{q(xhu)^{1/p}} q(xh)^{1/p} \dmu(u) \notag\\
 &=      \int \Delta_{\G}(u)^{1/p} f(xhu)\dmu(u)\\&=
 \rho^{\G}_p(\mu)f(xh).\notag
 \end{align}
\end{proof}

\begin{theorem}\label{fromhtog}
Let $G$ be  a  locally compact group,    $H$    a closed subgroup of $G$ and   $\mu\in M(G)$  with $\supp (\mu)\subseteq H$. Let as well  $1<p<\infty$.Then $\rho_H^p(\mu)$ is (uniformly, weakly) mean ergodic if and only if $\rho_G^p(\mu)$ is (uniformly,  weakly) mean ergodic.


\end{theorem}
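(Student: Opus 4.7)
The plan is to use the isometric multiplicative embedding $(R_H)^\ast\colon PM_p(H)\to PM_p(G)$ provided by Lemma~\ref{pseduom}, items~\ref{z} and (5). Since this map carries $\rho_p^H(\mu)$ to $\rho_p^G(\mu)$, it also carries powers to powers and Cesàro means to Cesàro means, so $(R_H)^\ast(\rho_p^H(\mu)_{[n]})=\rho_p^G(\mu)_{[n]}$ for every $n\in\mathbb{N}$. Uniform mean ergodicity transfers at once: the pseudomeasure norm agrees with the operator norm on $L^p$, so isometry of $(R_H)^\ast$ makes $(\rho_p^G(\mu)_{[n]})_n$ norm-Cauchy in $\ll(L^p(G))$ precisely when $(\rho_p^H(\mu)_{[n]})_n$ is norm-Cauchy in $\ll(L^p(H))$. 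For weak mean ergodicity, item~\ref{x} of the same lemma identifies WOT with $\sigma(PM_p,A_p)$ on bounded sets, while $(R_H)^\ast$, being the adjoint of the surjection $R_H$, is a weak-$\ast$ homeomorphism onto its weak-$\ast$-closed range; combined with the Banach--Steinhaus bound granted by any weak convergence of Cesàro averages, this transfers weak mean ergodicity in both directions.

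The delicate case is mean ergodicity in the strong operator topology. The plan is to combine item~\ref{t} of Lemma~\ref{pseduom} with the Mackey--Bruhat formula (Lemma~\ref{WIF}) to view $\rho_p^G(\mu)$ as a direct integral of copies of $\rho_p^H(\mu)$ indexed by $G/H$. Fix a Borel cross-section $s\colon G/H\to G$ (which exists by classical results) and set $f_x(h):=f(xh)/q(xh)^{1/p}$. Then Mackey--Bruhat gives
\[\|f\|^p_{L^p(G)}=\int_{G/H}\|f_{s(\dot{x})}\|^p_{L^p(H)}\,dm_{G/H}(\dot{x}),\]
and item~\ref{t} specialised to $x=s(\dot{x})$ yields $(\rho_p^G(\mu)^k f)_{s(\dot{x})}=\rho_p^H(\mu)^k f_{s(\dot{x})}$ for every $k$, so the Cesàro averages relate fiberwise as well.

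With this decomposition, the forward implication (mean ergodicity of $\rho_p^H(\mu)$ gives that of $\rho_p^G(\mu)$) follows from Lebesgue's dominated convergence theorem, using the Banach--Steinhaus uniform bound on $(\rho_p^H(\mu)_{[n]})_n$ to dominate the integrand by a constant times $\|f_{s(\dot{x})}\|^p$. For the converse, given $g\in L^p(H)$, I would fix a compact subset $K\subseteq G/H$ of positive measure and construct
\[f(y):=\chi_K(\dot{y})\,g(s(\dot{y})^{-1}y)\,q(y)^{1/p}\in L^p(G),\]
whose slices satisfy $f_{s(\dot{x})}=\chi_K(\dot{x})g$. A direct computation then yields
\[\|\rho_p^G(\mu)_{[n]}f-\rho_p^G(\mu)_{[m]}f\|^p_{L^p(G)}=m_{G/H}(K)\,\|\rho_p^H(\mu)_{[n]}g-\rho_p^H(\mu)_{[m]}g\|^p_{L^p(H)},\]
forcing the Cauchy property to pass from $(\rho_p^G(\mu)_{[n]}f)_n$ to $(\rho_p^H(\mu)_{[n]}g)_n$. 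The main technical obstacle is the construction and handling of the test function $f$: one must guarantee Borel measurability via the existence of a Borel cross-section for $G/H$, and the slicing identities require careful tracking of the factor $q$, since the clean fibered action of $\rho_p^G(\mu)$ on slices $f_x$ materialises only when $x$ is evaluated at cross-section representatives (a point that must be reconciled with the $x$-dependent formula of item~\ref{t}).
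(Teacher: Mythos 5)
Your proposal is correct, and for the uniform and weak cases it is essentially the paper's argument: the uniform case is transferred through the multiplicative isometry $(R_H)^\ast\colon PM_p(H)\to PM_p(G)$, and the weak case uses the coincidence of the weak operator and $\sigma(PM_p(G),A_p(G))$ topologies on bounded sets together with weak-$\ast$ continuity of $(R_H)^\ast$ (and of its inverse on the range, which indeed holds because $R_H$ is onto); the paper packages the weak case through the two conditions of Theorem \ref{weakyosida} instead, but the ingredients are the same. The genuine divergence is in the strong (mean ergodic) case. From $H$ to $G$ both proofs run the Mackey--Bruhat disintegration plus dominated convergence; note you do not actually need a cross-section there, since the inner integral $\int_H |f(xh)|^p q(xh)^{-1}\dmh(h)$ depends only on the coset $\dot x$, which is how the paper phrases it. For the converse, the paper follows Derighetti \cite[Theorem 7.3.2]{deri11} and produces $v_f\in L^p(G)$, $v_g\in L^{p^\prime}(G)$ with $\norm{v_g}\leq\norm{g}$ and $|\langle \rho_p^H(\mu_n)f,g\rangle|\leq|\langle\rho_p^G(\mu_n)v_f,v_g\rangle|$, whereas you build the explicit test function $f(y)=\chi_K(\dot y)\,g(s(\dot y)^{-1}y)\,q(y)^{1/p}$, a ``constant field'' of copies of $g$ over a compact $K\subseteq G/H$, yielding the exact identity $\norm{\rho_p^G(\nu)f}_p^p=m_{G/H}(K)\,\norm{\rho_p^H(\nu)g}_p^p$ for every $\nu\in M(H)$, so the Cauchy property passes down isometrically. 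Your route is more elementary and self-contained, and gives a sharper (isometric) statement on these special vectors; its price is exactly the point you flag: one needs a Borel, suitably regular cross-section $s\colon G/H\to G$ and the validity of the Weil--Mackey--Bruhat formula for the resulting Borel function. For second countable $G$ this is Mackey's classical selection theorem, and in general it requires Feldman--Greenleaf/Kehlet-type cross-section theorems together with some care about measurability and local null sets --- precisely the technical overhead that the paper's appeal to Derighetti's construction is designed to avoid. With such a reference supplied, your argument is a valid alternative proof.
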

\begin{proof}

\emph{1. Uniform mean ergodicity.}\\
Since $(R_H)^\ast\colon PM_p(H)\to PM_p(G)$ is a multiplicative linear isometry and $(R_H)^\ast(\rho_p^H(\mu))=\rho_p^G(\mu)$, Lemma \ref{pseduom}, it is clear that $\rho_p^H(\mu)$ is uniformly mean ergodic if and only if $\rho_p^G(\mu)$ is uniformly mean ergodic.

\emph{2. Weak mean ergodicity.}\\
Lemma \ref{pseduom} and the Banach-Steinhaus theorem
imply that Condition (2) of Theorem \ref{weakyosida} holds for  $\rho_H^p(\mu)$ if and only if it holds  or  $\rho_G^p(\mu)$. It also follows that as soon as  either $\rho_H^p(\mu)$ or  $\rho_G^p(\mu)$ is weakly mean ergodic then  $\lambda_H^p(\mu)_{[n]}$, $\rho_G^p(\mu)_{[n]}$,
$\frac{1}{n}\rho_G(\mu^{n})$ and $\frac{1}{n}\rho_H(\mu^{n})$ will all  be bounded in the operator norm.

Suppose now that  $\frac{1}{n}\rho_H^p(\mu)^n$ converges   to 0 in the weak operator topology and let  $f\in L^p(G)$ and $g\in L^{p^\prime}(G)$,
then
\begin{align*}
  \frac{1}{n}\left\langle \rho_G^p(\mu)^nf,g\right\rangle&=\frac{1}{n}
  \left\langle \rho_G^p(\mu^{n}),\bar{ f}\ast \check{g} \right\rangle\\&=\frac{1}{n}
  \left\langle \rho_H^p(\mu^{n}), R_H(\bar{f}\ast \check{g})\right\rangle.
\end{align*}
So, since weak operator topology and $\sigma(PM_p(G),A_p(G))$ coincide on bounded sets of $PM_p(G) $ ((1) of Lemma \ref{pseduom}),    $\frac{1}{n}\rho_G^p(\mu^n)$ converges to 0 in the weak operator topology.

If, conversely, $\frac{1}{n}\rho_G^p(\mu)^n$  converges to 0  in the weak operator topology and $f\in L^p(H)$, $g\in L^{p^\prime}(H)$ we can consider
$u\in A_p(G)$ with $R_H(u)=\bar{f}\ast\check{g}$ and, noting again that $R^\ast(\rho_p^H(\mu))=\rho_p^G(\mu)$,
\begin{align*}
  \frac{1}{n}\left\langle \rho_H^p(\mu)^n f,g\right\rangle&=\frac{1}{n}
  \left\langle \rho_H^p(\mu^{n}), \bar{f}\ast \check{ g} \right\rangle\\&=\frac{1}{n}
  \left\langle \rho_H^p(\mu^{n}), R_H(u)\right\rangle\\
  &=\frac{1}{n}\left\langle \rho_G^p(\mu^{n}), u\right\rangle
\end{align*}
and we see that  $\frac{1}{n}\rho_H^p(\mu)^n$ converges to 0 in the  weak operator topology.

As both conditions of  Theorem \ref{weakyosida} hold precisely for $\rho_p^G(\mu)$ when they hold for $\rho_p^H(\mu)$, we conclude that  $\rho_p^G(\mu)$ is weakly mean ergodic if and only if $\rho_p^H(\mu)$ is.

\emph{3. Mean ergodicity.}\\
We are now going to use the norm topology version of Theorem \ref{weakyosida}. As above, Condition (2) will be satisfied for $\rho^{\G}_p(\mu)$ if and only if it is satisfied for  $\rho^{\hh}_p(\mu)$.

Let $\mu_n=\frac{\mu^{n}}{n}$. We will show that $\rho^{\G}_p(\mu_n)$ converges to 0 in the strong
operator topology if and only if so does $\rho^{\hh}_p(\mu_n)$.   Our approach  will   follow closely Chapter 7 of \cite{deri11}.

So, let us first assume  that $\rho^{\hh}_p(\mu_n)$ converges to $0 $ in the strong operator topology.
%
%
Using the Mackey-Bruhat formula in Lemma \ref{WIF}, for $f\in L^p(G)$,
\begin{align}\label{MBrho}
  \norm{\rho_{\G}(\mu_n)f}^p_{_{L^p(G)}} & =
   \int_{G/H}\int_H \frac{\left|\rho_{\G}(\mu_n)f(sh)\right|^p }{q(sh)}\dmh(h)\dmgh(\dot{s}).
  \end{align}
  Putting $f_{s}(h)=\frac{f(sh)}{q(sh)^{1/p}}$ and applying the properties of the function $q$ to the inner integral in \eqref{MBrho}
  \begin{align*}
\int_H\frac{\left|(\rho_{\G}(\mu_n)f )(sh)\right|^p }{q(sh)}\dmh(h)&= \int_H \left|\int_H \frac{\Delta_{\G}(u)^{1/p} f(shu) }{q(sh)^{1/p}}d\mu_n(u)\right|^p\dmh(h)\\
&=\int_H \left|\int_H \frac{\Delta_{\G}(u)^{1/p} f(shu) \Delta_{\hh}(u)^{1/p} }{q(shu)^{1/p}\Delta_{\G}(u)^{1/p}}d\mu_n(u)\right|^p \dmh(h)
\\
&=\int_H \left|\int_H\frac{\Delta_{\hh}(u)^{1/p}  f(shu)  }{q(shu)^{1/p} }d\mu_n(u)\right|^p\dmh(h)
\\&=\norm{(\rho_{\hh}(\mu_n) f_s)}_{_{L^P(H)}}^p.
  \end{align*}

Since $\rho_{\hh}(\mu_n)$ converges to 0 in the strong operator topology, the sequence $\norm{\rho_{\hh}(\mu_n) f_s}_{_{L^p(H)}}^p$ converges to 0 for every $\dot{s}\in G/H$. By Lebesgue's dominated convergence theorem, applied to the integral in \eqref{MBrho}, the sequence $\norm{\rho_{\G}(\mu_n)f}{_{L^p(G)}}$ will  converge to 0 as long as we can see that  the functions $\norm{\rho_{\hh}(\mu_n) f_s}_{_{L^p(H)}}^p$ are dominated by some integrable function.
But, for each $\dot{s}\in G/H$, $\norm{\rho_{\hh}(\mu_n) f_s}_{_{L^p(H)}}^p\leq \norm{\rho_{\hh}(\mu_n)}^p\cdot  \norm{f_s}_{_{L^p(H)}}^p$, and the Bruhat-Mackey formula implies that $\dot{s}\mapsto \norm{f_s}_{_{L^p(H)}}^p$ is integrable with, precisely, \[\int_{G/H}\norm{f_s}_{_{L^p(H)}}^p\dmgh(\dot{s})=\norm{f}_{_{L^p(G)}}^p.\]
We conclude so that  $\rho^{\G}_p(\mu_n)f$ converges to $0$ in norm.

Assume now that $\rho^{\G}_p(\mu_n)$ converges to 0 and let $f\in L^p(H)$, $g\in L^{\pp}(H)$ with $\norm{g}_{_{L^{\pp}(H)}}\leq 1$. If we follow the proof of Theorem 7.3.2 of \cite{deri11}, we can find two functions $v_f\in L^p(G)$, $v_g\in L^{\pp}(G)$ such that (this is the top formula of page 115 {\it loc. cit.})
\begin{align*}
\norm{v_g}_{_{L^{\pp}(G)}}&\leq\norm{g}_{_{L^{\pp}(H)}}\leq 1 \quad \mbox{ and } \\
\left|\bang{\rho^{\hh}_p(\mu_n)f,g}\right|&\leq
\left|\bang{\rho^{\G}_p(\mu_n)v_f,v_g}\right|.
\end{align*}
It follows then that $\lim_{n\to \infty}\norm{(\rho^{\hh}_p(\mu_n)f}_{_{L^p(H)}}=0$.

\end{proof}
The equivalence between $\rho_p(\mu)$ and $\lambda_p(\mu)$ stated in Fact  \ref{intert} leads to the following Corollary.
\begin{corollary}\label{adapted}
Let $G$ be  a  locally compact group,    $H$    a closed subgroup of $G$ and   $\mu\in M(G)$  with $\supp (\mu)\subseteq H$. Let as well  $1<p<\infty$. Then:
\begin{enumerate}
\item $\|\lambda_H^p(\mu)\|=\|\lambda_G^p(\mu)\| $. Hence,
    \item $r\left(\lambda_p^{\hh}(\mu)\right)=
  r\left(\lambda_p^{\G}(\mu)\right)
 $.
\item $\lambda_H^p(\mu)$ is (uniformly, weakly) mean ergodic if and only if $\lambda_G^p(\mu)$ is.
\end{enumerate}
\end{corollary}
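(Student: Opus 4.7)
The plan is to piggyback the corollary on the work already done for $\rho_p$ in Theorem \ref{fromhtog} and Lemma \ref{pseduom}, transferring every statement from $\rho_p$ to $\lambda_p$ by means of the intertwining isometry $U_p$ of Fact \ref{intert}. The key observation is that, for $1<p<\infty$, Fact \ref{intert} yields $\lambda_p^G(\mu)=U_p\,\rho_p^G(\mu)\,U_p^{-1}$ (and similarly on $H$, with an isometry $U_p^{\hh}$), so that $\lambda_p^\bullet(\mu)$ and $\rho_p^\bullet(\mu)$ share operator norm, spectral radius, powers (up to conjugation), and therefore share (weak/strong/uniform) convergence behaviour of their Cesàro averages.

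For item (1), I would first apply Lemma \ref{pseduom}(3) and (5): the map $(R_H)^\ast\colon PM_p(H)\to PM_p(G)$ is a multiplicative linear isometry, and $(R_H)^\ast(\rho_p^{\hh}(\mu))=\rho_p^{\G}(\mu)$. Consequently $\|\rho_p^{\hh}(\mu)\|=\|\rho_p^{\G}(\mu)\|$. Passing through the isometry $U_p$ on either side, one gets $\|\lambda_p^{\hh}(\mu)\|=\|\rho_p^{\hh}(\mu)\|=\|\rho_p^{\G}(\mu)\|=\|\lambda_p^{\G}(\mu)\|$.

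Item (2) follows by applying item (1) to every power $\mu^n$ (whose support remains in $H$, since $H$ is a subgroup), and then using the spectral radius formula
\[ r(\lambda_p^{\hh}(\mu))=\lim_n \|\lambda_p^{\hh}(\mu^n)\|^{1/n}=\lim_n \|\lambda_p^{\G}(\mu^n)\|^{1/n}=r(\lambda_p^{\G}(\mu)). \]
For item (3), Theorem \ref{fromhtog} already gives the equivalence of weak, strong, and uniform mean ergodicity between $\rho_p^{\hh}(\mu)$ and $\rho_p^{\G}(\mu)$. Since similarity by a surjective linear isometry commutes with Cesàro averaging and preserves convergence in any operator topology (norm, strong, weak), the same equivalence is inherited by $\lambda_p^{\hh}(\mu)$ and $\lambda_p^{\G}(\mu)$ through $U_p^{\hh}$ and $U_p^{\G}$ respectively.

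No serious obstacle is expected: the only mild subtlety is to verify that $U_p$ indeed conjugates mean-ergodic averages onto mean-ergodic averages (which is immediate from $\lambda_p(\mu)^k=U_p\rho_p(\mu)^k U_p^{-1}$ and linearity of the Cesàro sum), and to keep track of the fact that the intertwining isometries on $G$ and on $H$ are different operators, so the transfer is done separately on each side before invoking Theorem \ref{fromhtog} and Lemma \ref{pseduom} to bridge $G$ and $H$.
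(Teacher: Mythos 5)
Your argument is correct and matches the paper's: the paper proves the corollary exactly by transferring Theorem \ref{fromhtog} (and the isometric identification of $\rho_p^{\hh}(\mu)$ with $\rho_p^{\G}(\mu)$ from Lemma \ref{pseduom}) to the left convolution operators via the intertwining isometry $U_p$ of Fact \ref{intert}. Your additional care in noting that $U_p$ is a surjective isometry on each of $L^p(G)$ and $L^p(H)$ separately, so that norms, spectral radii, and Ces\`aro averages are preserved, is precisely the (unstated) content of the paper's one-line proof.
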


\subsection*{Acknowledgements} We have profitted from discussions around these topics with several colleagues. We would like in particular to thank Antoine Derighetti for always being ready to answer our queries on how to relate $\lambda_p^{\hh}(\mu)$ and $\lambda^{\G}_p(\mu)$, to  Michael Lin for helping us to gain perspective on the probabilistic side of convolution operators, to Nico Spronk and Matthias Neufang for making us aware of \cite{mukh76}  and sharing with us some details of \cite{neufsalmiskalspro}, which  permitted us  avoid our own, unnecessary, proof of  Theorem \ref{nocompact2}, and to Przemys{\l}aw Ohrysko for pointing us towards the paper \cite{iga69} and hence setting us on the path to Proposition \ref{exl2diflp}.

\end{document}